\documentclass[11pt]{article}
\usepackage{amsmath,amssymb,amsthm,amscd,amsfonts,enumerate,mathrsfs}

\setlength{\topmargin}{-0.5cm}
\setlength{\textheight}{22cm}
\setlength{\evensidemargin}{0.5cm}
\setlength{\oddsidemargin}{0.5cm}
\setlength{\textwidth}{15cm}

\newtheorem{theorem}{Theorem}[subsection]
\newtheorem{proposition}[theorem]{Proposition}
\newtheorem{lemma}[theorem]{Lemma}
\newtheorem{corollary}[theorem]{Corollary}

\theoremstyle{definition}
\newtheorem{definition}[theorem]{Definition}

\newtheorem{problem}[theorem]{Problem}
\newtheorem{example}[theorem]{Example}

\newtheorem{conjecture}[theorem]{Conjecture}

\newcommand{\N}{\mathbb{N}}
\newcommand{\Z}{\mathbb{Z}}

\newcommand{\R}{\mathbb{R}}
\newcommand{\C}{\mathbb{C}}
\newcommand{\K}{\mathbb{K}}
\renewcommand{\H}{\mathcal{H}}
\newcommand{\M}{\mathcal{M}}
\newcommand{\B}{\mathcal{B}}
\newcommand{\A}{\mathcal{A}}
\renewcommand{\O}{\mathcal{O}}

\newcommand{\abs}[1]{\left\lvert #1 \right\rvert}
\newcommand{\norm}[1]{\left\lVert #1 \right\rVert}
\newcommand{\innpr}[2]{\left\langle #1, #2 \right\rangle}

\DeclareMathOperator{\id}{id}
\DeclareMathOperator{\orb}{orb}
\DeclareMathOperator{\dist}{d}

\DeclareMathOperator{\Span}{span}

\title{Application of Operator Theory for the Collatz Conjecture}
\author{Takehiko Mori\footnote{ORCID:0000-0001-6448-8293}\\
Graduate School of Science and Engineering\\
Chiba University \\
Inage-ku, Chiba 263-8522, Japan
}

\date{}

\begin{document}

\maketitle

\begin{abstract}


The Collatz map (or the $3n{+}1$-map) $f$ is defined on positive integers by setting $f(n)$ equal to $3n+1$ when $n$ is odd and $n/2$ when $n$ is even. The Collatz conjecture states that starting from any positive integer $n$, some iterate of $f$ takes value $1$. In this study, we discuss formulations of the Collatz conjecture by $C^{*}$-algebras in the following three ways: (1) single operator, (2) two operators, and  (3) Cuntz algebra. For the $C^{*}$-algebra generated by each of these, we consider the condition that it has no non-trivial reducing subspaces. For (1), we prove that the condition implies the Collatz conjecture. In the cases (2) and (3), we prove that the condition is equivalent to the Collatz conjecture. For similar maps, we introduce equivalence relations by them and generalize connections between the Collatz conjecture and irreducibility of associated $C^{*}$-algebras.

\end{abstract}

\section{Introduction}

The Collatz conjecture (or the $3n{+}1$-problem) is a longstanding open problem for positive integers which is named after Lother Collatz who described the problem in an informal lecture in 1950 at the International Math.\ Congress in Cambridge, Massachusetts \cite{MR2560718}.
It is also known as Syracuse problem, Hasse's algorithm, Kakutani's problem and Ulam's problem.
The Collatz map $f$ is defined on positive integers by setting $f(n)$ equal to $3n+1$ when $n$ is odd and $n/2$ when $n$ is even. The problem is to prove that starting from any positive integer $n$, some iterate of $f$ takes value $1$.
According to a recent study, the conjecture has been shown to hold for all starting values up to $10^{20}$ by computer \cite{Ba21JS}.
At present, the Collatz conjecture remains unsolved.

The interaction between dynamical systems and operator theory has long been an active area of study. Especially there have been many interesting works which link dynamical systems to the theory of $C^*$-algebras. A $C^*$-algebra is a $*$-algebra consisting of operators on Hilbert spaces equipped with the operator norm topology.
In their pioneering work \cite{GPS95JRAM}, T. Giordano, I. F. Putnam, and C. F. Skau showed that two minimal homeomorphisms on the Cantor set are strongly orbit equivalent if and only if the associated crossed product $C^*$-algebras are isomorphic.
Later, this was generalized to minimal $\Z^d$-actions \cite{GMPS10IM}.
M. Boyle and J. Tomiyama studied relationships between orbit equivalence for topologically free homeomorphisms on compact Hausdorff spaces and the associated crossed product $C^*$-algebras \cite{BT98JMSJ}.
Another important example is the Cuntz-Krieger algebras \cite{CK80IM} arising from topological Markov shifts, and they have been extensively studied by many hands.
In \cite{Ma97IJM} and \cite{Ma98MS}, K. Matsumoto introduced the $C^*$-algebras associated with general subshifts and computed their $K$-groups. Matsumoto and Matui \cite{MM14KJM} classified topological Markov shifts up to continuous orbit equivalence based on the idea of Cuntz-Krieger algebras.

In this paper, we present new relationship between dynamical systems and $C^{*}$-algebras. Namely, we will formulate the Collatz conjecture in terms of $C^*$-algebras.
There has never been a study which provides connection between the Collatz conjecture and operator algebras on Hilbert spaces, and the present paper is a first attempt for such a direction.
We discuss formulations of the Collatz conjecture by $C^{*}$-algebras in the following three ways: (1) single operator, (2) two operators, and  (3) Cuntz algebra. For the $C^{*}$-algebra generated by each of these, we consider the condition that it has no non-trivial reducing subspaces. For (1), we prove that the condition implies the Collatz conjecture (Corollary \ref{cor:SingleS}). In the cases (2) and (3), we prove that the condition is equivalent to the Collatz conjecture (Theorem \ref{thm:TwoNS} and Theorem \ref{thm:CuntzNS}). 
We remark that J. Leventides and C. Poulios \cite{LP21IFACPO} studied the Collatz conjecture by functional analytic approach. They associated with the Collatz map a bounded operator $K$ on the space $l_1(\N)$ of all absolutely summable sequences of real numbers on $\N$, and rephrased the Collatz conjecture in terms of the operator $K$. S. Letherman, D. Schleicher, and R. Wood \cite{LSW99EM} studied the Collatz conjecture by using holomorphic functions.  They constructed entire holomorphic functions which are extensions of the Collatz map, and generalized the Collatz conjecture for the holomorphic functions. The present paper can be said to be in the trend of these results. 

The paper is organized as follows.
Section \ref{sec:Collatz conjecture} recalls the concept of orbits in dynamical systems and state the Collatz conjecture in that context.
The section closes with definition of orbit equivalence relations by maps.
This is used in Section \ref{sec:Formulation of the Collatz conjecture} and \ref{sec:Generalization}.
Section \ref{sec:Hilbert spaces} reviews the theory of Hilbert spaces and $C^{*}$-algebras for readers who are not familiar with functional analysis and operator theory.
Section \ref{sec:Formulation of the Collatz conjecture} discusses formulations of the Collatz conjecture as operator theoretic problems. We define some operators on a Hilbert space and $C^{*}$-algebras generated by these operators. The properties of these $C^{*}$-algebras are examined and proved in this section.
Section \ref{sec:Generalization} generalizes study in Section \ref{sec:Formulation of the Collatz conjecture}. Bounded and separating condition of maps are defined in this section which characterize generalization of connections between the Collatz conjecture and irreducibility of associated $C^{*}$-algebras.

Throughout this paper, $\N$ denotes the set of all positive integers and $\abs{x}$ denotes the absolute value of a number $x$. For a topological space $X$ and its subspace $Y$, $\overline{Y}$ denotes the closure of $Y$ in $X$. For $k \in \N_{>1}$, $\{1,2,\cdots,k\}^*$ denotes the disjoint union over $n \in \N$ of $\{1,2,\cdots,k\}^n$.

\section{The Collatz conjecture and dynamical systems} \label{sec:Collatz conjecture}

In this section, we consider the iteration of the Collatz map and describe the Collatz conjecture as a problem on discrete dynamical systems. For more developed studies on this point of view, see e.g., \cite{LSW99EM} and \cite{LP21IFACPO}.

\subsection{Dynamical systems}

Let $X$ be a set and $f:X \to X$ be a map on $X$. Let us denote $f^n$ as the $n$-th iterate of $f$, where $n \in \{0\} \cup \N$, that is,
\begin{equation*}
f^0=\id_X
\end{equation*}
and
\begin{equation*}
f^{n+1}=f \circ f^{n}.
\end{equation*}

Let us recall the notion of orbits and first-return maps.
\begin{definition}[Orbits]
For $x \in X$, the (forward) orbit of $x$, denoted by $\orb(x;f)$, is defined as
\begin{equation*}
\orb(x;f)=\{f^n(x) \mid n \in \{0\} \cup \N\}.
\end{equation*}
\end{definition}

\begin{definition}[First-return maps] \label{def:First-return map}
Let $\Sigma$ $(\neq \emptyset) \subseteq X$ and $x \in \Sigma$. If there exists $n \in \N$ such that $f^n(x) \in \Sigma$, then we will let $\tau(x)$ denote the minimum positive integer as such. We will let $\Sigma'$ denote the set of all $x \in X$ such that there exists the $\tau(x)$. Then, the first return map (or Poincar\'{e} map) for $f$ on $\Sigma$ is the map $P:\Sigma' \to \Sigma$ defined by:
\begin{equation*}
P(x)=f^{\tau(x)}(x), \text{ where } x \in \Sigma'.
\end{equation*}
\end{definition}

\subsection{The Collatz conjecture}

\begin{definition}[The Collatz map]
The Collatz map $f:\N \to \N$ is defined by:
\begin{equation*}
f(n)=
\begin{cases}
3n+1, & n:\text{odd},\\
n/2, & n:\text{even}.
\end{cases}
\end{equation*}
\end{definition}
For the Collatz map $f$, the following statement is well-known as the Collatz conjecture or the $3n{+}1$-problem.
\begin{conjecture}[The Collatz conjecture]
For every $n \in \N$, there is $m \in \{0\} \cup \N$ such that $f^m(n)=1$, that is, $1 \in \orb(n;f)$ for every $n \in \N$.
\end{conjecture}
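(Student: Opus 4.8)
The plan is to reduce the conjecture to two classical subproblems and to organize each within the dynamical framework set up above. Observe first that, since $f$ maps $\N$ to $\N$, any bounded forward orbit $\orb(n;f)$ takes finitely many values and is therefore eventually periodic; hence the conjecture is equivalent to the conjunction of two statements: (A) no forward orbit is unbounded, and (B) the only periodic orbit (cycle) is $\{1,2,4\}$. Granting (A), every orbit is eventually trapped in a cycle, and granting (B) that cycle must be the trivial one, forcing $1 \in \orb(n;f)$. I would prove (A) and (B) separately and then combine them.

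For (B), the no-cycle problem, I would pass to the first-return map $P$ of $f$ on the set $\Sigma$ of odd integers (in the sense of Definition \ref{def:First-return map}), which sends an odd $n$ to the odd part of $3n+1$. A periodic orbit of $P$ of length $b$ through $a_1 < \cdots < a_b$ yields an exact Diophantine relation of the form $a_1\,(2^{K}-3^{b}) = \sum_{j=1}^{b} 3^{\,b-j}\,2^{K_j}$ for suitable non-negative exponents $K, K_1, \dots, K_b$, and the plan is to bound the admissible exponent patterns so tightly that no integer solution with $a_1>1$ survives. The key tool here is a lower bound for the linear form $\abs{2^{K}-3^{b}}$ coming from Baker-type estimates on linear forms in logarithms, combined with the average-growth constraint that any cycle must satisfy.

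For (A), the no-divergence problem, I would exploit that along the accelerated map a step multiplies by $3$ and then divides out a power of $2$, so the expected multiplicative change per odd step is $\tfrac{3}{4}<1$; the plan is to upgrade this average contraction to a genuine decrease valid for every starting value, not merely for a set of full density. Concretely I would seek a weight $w:\N \to (0,\infty)$, tending to infinity, with $w(f(n))<w(n)$ outside a finite exceptional set, so that $w$ is a strict Lyapunov function whose sublevel sets are finite; its existence rules out unbounded orbits immediately.

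The hard part will be that neither (A) nor (B) is known, and the operator-theoretic reformulations of this paper, while genuinely equivalent restatements (Theorem \ref{thm:TwoNS}, Theorem \ref{thm:CuntzNS}, Corollary \ref{cor:SingleS}), do not by themselves lower the difficulty: proving that the associated $C^{*}$-algebra has no non-trivial reducing subspace is exactly as hard as proving the conjecture, since the two are shown to be equivalent. Thus any complete proof must inject a genuinely new idea — either a Lyapunov weight that beats the probabilistic heuristic uniformly in $n$, or a Diophantine obstruction strong enough to exclude all nontrivial cycles — and at present no such input is available; the conjecture remains open.
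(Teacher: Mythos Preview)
The paper does not prove this statement: it is explicitly stated as a \emph{Conjecture}, not a theorem, and no proof is offered anywhere in the text. The entire point of the paper is to give operator-theoretic \emph{reformulations} of the conjecture (Corollary~\ref{cor:SingleS}, Theorem~\ref{thm:TwoNS}, Theorem~\ref{thm:CuntzNS}), not to resolve it.

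Your proposal correctly recognizes this. Your decomposition into (A) no unbounded orbits and (B) no nontrivial cycles is the standard and accurate dichotomy, and your closing paragraph is exactly right: the equivalences established in the paper do not lower the difficulty, and the conjecture remains open. So there is no discrepancy to flag --- neither you nor the paper claims a proof, and you are right not to.

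One small caution on presentation: what you have written is not a proof proposal but an honest explanation of why no proof is currently available. That is the appropriate response to a statement labeled \emph{Conjecture}, but it should not be framed as a ``plan to prove'' the result; rather, it is a survey of the known obstructions.
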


\subsection{Equivalence relations}

Let $X$ be a set and $f:X \to X$ be a map. We define an equivalence relation on $X$ by $f$.

\begin{definition}

We define a binary relation on $X$ which is called the orbit equivalence relation by $f$ as follows:
For $x,y \in X$,

\begin{equation*}
x \sim y \iff \orb(x;f) \cap \orb(y;f) \neq \emptyset,
\end{equation*}
in other words, there exist $n,m \in \{0\} \cup \N$ such that $f^{n+k}(x)=f^{m+k}(y)$ for all $k \in \{0\} \cup \N$.

\end{definition}
It is easy to check that the orbit equivalence relation by $f$ is an equivalence relation.
\begin{lemma}[Equivalence relations]
If $\sim$ is defined as above, then $\sim$ is an equivalence relation.
\end{lemma}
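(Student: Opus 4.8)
The plan is to verify, directly from the definition $x \sim y \iff \orb(x;f)\cap\orb(y;f)\neq\emptyset$, the three defining properties of an equivalence relation on $X$: reflexivity, symmetry, and transitivity. Reflexivity and symmetry cost essentially nothing. For any $x\in X$ we have $x=f^0(x)\in\orb(x;f)$, so $\orb(x;f)\cap\orb(x;f)=\orb(x;f)$ is nonempty and hence $x\sim x$. Symmetry is immediate because set intersection is commutative, $\orb(x;f)\cap\orb(y;f)=\orb(y;f)\cap\orb(x;f)$, so $x\sim y$ gives $y\sim x$.

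The only point worth spelling out is transitivity, and for it I would first record the elementary \emph{propagation fact}: if $f^n(x)=f^m(y)$ with $n,m\in\{0\}\cup\N$, then applying $f^k$ to both sides yields $f^{n+k}(x)=f^{m+k}(y)$ for every $k\in\{0\}\cup\N$. (This is exactly why the "in other words" reformulation in the definition is equivalent to $\orb(x;f)\cap\orb(y;f)\neq\emptyset$.) Now suppose $x\sim y$ and $y\sim z$, and choose $a,b,c,d\in\{0\}\cup\N$ with $f^a(x)=f^b(y)$ and $f^c(y)=f^d(z)$. Pick any $k\in\{0\}\cup\N$ with $b+k\ge c$ and set $s=b+k-c\ge 0$. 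Using the propagation fact on both equalities,
\[ f^{a+k}(x)=f^{b+k}(y)=f^{c+s}(y)=f^{d+s}(z), \]
so this common value lies in $\orb(x;f)\cap\orb(z;f)$, which is therefore nonempty, and hence $x\sim z$.

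Since every step is a one-line set-theoretic manipulation, there is no genuine obstacle here. The only thing to be careful about is not presuming that the two intersecting orbits in the hypothesis of transitivity meet at the same iteration index; the propagation fact is precisely what lets one "synchronize" the indices (by iterating far enough), and that is the entire content of the transitivity argument.
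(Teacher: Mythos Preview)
Your proof is correct and is exactly the routine verification the paper has in mind; the paper itself omits the argument entirely, remarking only that ``it is easy to check'' before stating the lemma. Your handling of transitivity via the propagation fact $f^{n+k}(x)=f^{m+k}(y)$ is the natural way to synchronize indices, and there is nothing to add.
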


\section{Hilbert spaces and operator theory} \label{sec:Hilbert spaces}

In this section, we will see some fundamental definitions and theorems in operator theory. For more details, see, e.g., \cite{Co00GSM} and \cite{Co96GTM}.

$\K$ will denote either the real field, $\R$, or the complex field, $\C$. For any set $X$, the Kronecker delta on $X$, denoted by $\delta$, is defined as follows:
For $x,y \in X$,
\begin{equation*}
\delta_{x,y}=
\begin{cases}
1, & x=y,\\
0, & x \neq y.
\end{cases}
\end{equation*}

\subsection{Hilbert spaces}

Let $\H$ be an inner product space on $\K$, i.e., $\H$ is a vector space over $\K$ and there is an inner product $\innpr{\cdot}{\cdot}:\H \times \H \to \K$, and distance function $\dist$ on $\H$ be defined by:

\begin{equation*}
\dist(x,y)=\abs{\innpr{x-y}{x-y}}^{1/2}, \text{ where } x,y \in \H.
\end{equation*}

Let us define Hilbert spaces on $\K$ as follows.

\begin{definition}[Hilbert spaces]
An inner product space $\H$ is a Hilbet space on $\K$ when $\H$ is complete for the topology induced by $\dist$ as defined above.
\end{definition}

From now on, we will assume that $\H$ is a Hilbert space on $\K$ and $\innpr{\cdot}{\cdot}$ is the inner product on $\H$.
Let us recall the notion of orthogonal complements and completely orthonormal systems.

\begin{definition}[Orthogonal complements]
Let $\M(\neq \emptyset) \subseteq \H$. The orthogonal complement for $\M$, denoted by $\M^\bot$, is defined by:
\begin{equation*}
\M^\bot=\{y \in \H \mid \innpr{x}{y}=0, \forall x \in \M\}.
\end{equation*}
\end{definition}

\begin{theorem}[{\cite[Chapter I, Theorem 2.5, Theorem 2.6]{Co96GTM}}, Orthogonal decomposition] \label{thm:orthogonal decomposition}
Let $\M \subseteq \H$ be a closed subspace. For every $x \in \H$, there are unique $y \in \M$ and $z \in \M^\bot$ such that $x=y+z$.
\end{theorem}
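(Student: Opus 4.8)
The plan is to prove \emph{existence} of the decomposition by a closest-point argument, and then deduce the orthogonality and uniqueness from it.

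For existence, I would fix $x \in \H$ and set $d = \inf\{\dist(x,m) \mid m \in \M\}$. Choose a sequence $(y_n)$ in $\M$ with $\dist(x,y_n) \to d$. Applying the parallelogram law $\innpr{u+v}{u+v} + \innpr{u-v}{u-v} = 2\innpr{u}{u} + 2\innpr{v}{v}$ to $u = x - y_n$ and $v = x - y_m$, and noting that $\tfrac{1}{2}(y_n + y_m) \in \M$ because $\M$ is a subspace, one gets $\dist\!\big(x, \tfrac{1}{2}(y_n+y_m)\big) \geq d$, hence
\begin{equation*}
\dist(y_n,y_m)^2 \leq 2\,\dist(x,y_n)^2 + 2\,\dist(x,y_m)^2 - 4d^2 \longrightarrow 0.
\end{equation*}
Thus $(y_n)$ is a Cauchy sequence; since $\H$ is complete it converges to some $y \in \H$, and since $\M$ is closed we have $y \in \M$, with $\dist(x,y) = d$ by continuity of $\dist$.

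Next, put $z = x - y$ and show $z \in \M^\bot$. For any $m \in \M$ and any $t \in \K$ we have $y + tm \in \M$, so $\innpr{z - tm}{z - tm} = \dist(x, y+tm)^2 \geq d^2 = \innpr{z}{z}$, which expands to $-2\,\mathrm{Re}\big(\overline{t}\,\innpr{z}{m}\big) + \abs{t}^2 \innpr{m}{m} \geq 0$ for all $t$. Choosing $t = s\,\innpr{z}{m}$ with $s > 0$ small (in the case $\K = \R$ this is immediate, and for $\K = \C$ this choice kills the phase) forces $\innpr{z}{m} = 0$; since $m \in \M$ was arbitrary, $z \in \M^\bot$. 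Finally, for uniqueness, suppose $y + z = y' + z'$ with $y,y' \in \M$ and $z,z' \in \M^\bot$. Then $w := y - y' = z' - z$ lies in both $\M$ and $\M^\bot$, so $\innpr{w}{w} = 0$ and hence $w = 0$, giving $y = y'$ and $z = z'$.

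The step I expect to be the main obstacle is the Cauchy estimate for the minimizing sequence: it is exactly here that both hypotheses are used in an essential way — the parallelogram identity together with $\M$ being a subspace yields the bound, closedness of $\M$ keeps the limit inside $\M$, and completeness of $\H$ guarantees the limit exists at all (without completeness the infimum $d$ need not be attained). The orthogonality and uniqueness steps are then short algebraic consequences.
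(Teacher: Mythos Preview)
Your argument is correct and is precisely the standard closest-point proof. Note, however, that the paper does not supply its own proof of this theorem: it is stated with a citation to \cite[Chapter I, Theorems 2.5 and 2.6]{Co96GTM} and left unproved, as is common for background Hilbert-space material. The proof you have written is essentially the one found in that reference (minimizing sequence, parallelogram law to get Cauchy, closedness and completeness to get the limit in $\M$, then the variational argument for orthogonality and the trivial uniqueness), so there is nothing to contrast.
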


\begin{definition}[Completely orthonormal systems]
Let $\{e_\lambda\}_{\lambda \in \Lambda} \subseteq \H$ be a set, where $\Lambda$ is an index set. The set $\{e_\lambda\}_{\lambda \in \Lambda}$ is a completely orthonormal system (abbreviated as C.O.N.S.) for $\H$, when the following conditions hold:
\begin{enumerate}[(i)]
\item For every $\lambda$ and $\mu \in \Lambda$, $\innpr{e_\lambda}{e_\mu}=\delta_{\lambda , \mu}$;
\item For $x \in \H$, $x=0 \iff \innpr{x}{e_\lambda}=0$ for every $\lambda \in \Lambda$.
\end{enumerate}
\end{definition}
Let us state the following theorem without the proof. For the proof of the theorem, see \cite{Co96GTM}.
\begin{theorem}[{\cite[Chapter I, Theorem 4.13, Proposition 4.14]{Co96GTM}}, Fourier series] \label{thm:Fourier series}
For every Hilbert space $\H$ on $\K$, the following hold:
\begin{enumerate}[(i)]
\item There is a C.O.N.S. for $\H$ and its cardinality depends on $\H$ only; 
\item If $\{e_\lambda\}_{\lambda \in \Lambda}$ is a C.O.N.S. for $\H$, then every $x \in \H$ can be expanded \\by $\{ \langle x,e_\lambda \rangle e_\lambda \}_{\lambda \in \Lambda}$, i.e., $x$ is a unique accumulation point of the set \\$\{\sum_{\lambda \in F}\innpr{x}{e_\lambda}e_\lambda \mid F\text{ is a finite subset of }\Lambda \}$.
\end{enumerate}
\end{theorem}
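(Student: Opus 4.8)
The statement is the standard structure theorem for orthonormal bases of a Hilbert space; the plan is to prove its three assertions --- existence of a C.O.N.S., invariance of its cardinality, and the Fourier expansion --- in that order, using only Zorn's lemma, elementary cardinal arithmetic, and Theorem \ref{thm:orthogonal decomposition}. For existence, consider the family $\mathcal{F}$ of all orthonormal subsets of $\H$ (sets $S$ with $\innpr{u}{v} = \delta_{u,v}$ for $u,v \in S$), partially ordered by inclusion. The union of any chain in $\mathcal{F}$ is again orthonormal, so Zorn's lemma supplies a maximal element $E = \{e_\lambda\}_{\lambda \in \Lambda}$. Condition (i) of a C.O.N.S. holds by construction; for (ii), the forward implication is trivial, and for the reverse, if some $x \neq 0$ satisfied $\innpr{x}{e_\lambda} = 0$ for all $\lambda$, then $E \cup \{x/\norm{x}\}$ would be an orthonormal set strictly containing $E$, contradicting maximality. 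Hence $E$ is a C.O.N.S.

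Next, cardinality invariance. Let $E = \{e_\lambda\}_{\lambda \in \Lambda}$ and $F = \{f_\mu\}_{\mu \in M}$ be two C.O.N.S. for $\H$. If one is finite, then $\H$ is finite-dimensional as a $\K$-vector space and the claim is the dimension theorem of linear algebra, so assume both are infinite. For each $\lambda$, Bessel's inequality $\sum_{\mu \in G}\abs{\innpr{e_\lambda}{f_\mu}}^2 \le \norm{e_\lambda}^2$ --- valid for every finite $G \subseteq M$ by expanding $\norm{e_\lambda - \sum_{\mu \in G}\innpr{e_\lambda}{f_\mu}f_\mu}^2 \ge 0$ --- shows that $M_\lambda := \{\mu : \innpr{e_\lambda}{f_\mu} \neq 0\}$ is countable. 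Every $\mu \in M$ lies in some $M_\lambda$, for otherwise $f_\mu \perp e_\lambda$ for all $\lambda$, forcing $f_\mu = 0$ by condition (ii), a contradiction. Therefore $M = \bigcup_{\lambda \in \Lambda} M_\lambda$, so $\abs{M} \le \abs{\Lambda} \cdot \aleph_0 = \abs{\Lambda}$ since $\Lambda$ is infinite; by symmetry $\abs{\Lambda} \le \abs{M}$, and Schröder--Bernstein gives $\abs{\Lambda} = \abs{M}$.

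Finally, the Fourier expansion. Fix a C.O.N.S. $\{e_\lambda\}_{\lambda \in \Lambda}$ and $x \in \H$, and let $\M$ be the closure of the linear span of $\{e_\lambda\}$, a closed subspace. By Theorem \ref{thm:orthogonal decomposition} write $x = y + z$ with $y \in \M$ and $z \in \M^\bot$; then $\innpr{z}{e_\lambda} = 0$ for all $\lambda$, so (ii) forces $z = 0$, i.e. $x = y \in \M$. Given $\varepsilon > 0$ there is therefore a finite $F_0 \subseteq \Lambda$ and scalars $c_\lambda$ with $\norm{x - \sum_{\lambda \in F_0} c_\lambda e_\lambda} < \varepsilon$. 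For any finite $F$ with $F_0 \subseteq F$, the vector $\sum_{\lambda \in F}\innpr{x}{e_\lambda}e_\lambda$ is the orthogonal projection of $x$ onto $\Span\{e_\lambda : \lambda \in F\}$ (one checks $x - \sum_{\lambda \in F}\innpr{x}{e_\lambda}e_\lambda \perp e_\mu$ for each $\mu \in F$), hence the best approximation to $x$ from that subspace, so $\norm{x - \sum_{\lambda \in F}\innpr{x}{e_\lambda}e_\lambda} \le \norm{x - \sum_{\lambda \in F_0} c_\lambda e_\lambda} < \varepsilon$. This says exactly that the net $\{\sum_{\lambda \in F}\innpr{x}{e_\lambda}e_\lambda\}_{F}$ of finite partial sums, directed by inclusion, converges to $x$; since the norm topology is Hausdorff, $x$ is its unique accumulation point, which is (ii).

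The step I expect to be the main obstacle is the cardinality-invariance half of (i) in the infinite-dimensional case: it is the only place requiring genuine set-theoretic input (Zorn for existence, and the countability-of-supports argument together with Schröder--Bernstein and the absorption identity $\kappa \cdot \aleph_0 = \kappa$ for infinite $\kappa$), and one must take care to split off the finite-dimensional case cleanly since the argument above silently assumes infiniteness. Everything else reduces to expanding $\norm{\cdot}^2$ and applying orthogonal decomposition, so it is routine.
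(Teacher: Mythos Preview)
The paper does not prove this theorem at all: it is stated explicitly without proof, with the reader referred to Conway \cite{Co96GTM} (see the sentence immediately preceding the theorem). So there is no in-paper argument to compare against. Your proof is the standard textbook one --- Zorn's lemma for existence, the Bessel/countable-support covering argument plus Schr\"oder--Bernstein for cardinality invariance, and orthogonal decomposition together with best approximation for the expansion --- and it is correct as written.
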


From the first statement above, we will define the dimension of $\H$ on $\K$, denoted by $\dim{\H}$, as the cardinality of a C.O.N.S. for $\H$. For the second statement, we will write $x=\sum_{\lambda \in \Lambda}\innpr{x}{e_\lambda}e_\lambda$ for every $x \in \H$.

\subsection{Bounded linear operators}

When we define $\norm{\cdot}:\H \to \R$ by:
\begin{equation*}
\norm{x}=\abs{\innpr{x}{x}}^{1/2}, \text{ where }x \in \H,
\end{equation*}
$\norm{\cdot}$ is a norm on $\H$. Thus we can assume that $\H$ is a normed space and discuss bounded linear operators on $\H$.

\begin{definition}[Bounded linear operators]
Let $\norm{\cdot}$ be defined as above and $T:\H \to \H$ be a linear map. We say that $T$ is bounded if
\begin{equation*}
\sup\{ \norm{Tx} \mid x \in \H, \norm{x} \le 1 \}<+\infty.
\end{equation*}
The set of all bounded linear operators on $\H$ is denoted by $\B (\H)$.
\end{definition}

For example, the identity map on $\H$ is a bounded linear operator. We will denote this map by $I$.

Next, we define the adjoint of a bounded linear operator which is also bounded.
\begin{definition}[Adjoint of an operator]
Let $T \in \B(\H)$. The adjoint of $T$, denoted by $T^*$, is the bounded linear operator on $\H$ such that
\begin{equation*}
\innpr{Tx}{y}=\innpr{x}{T^*y}
\end{equation*}
for every $x, y \in \H$.
\end{definition}

For every $T \in \B(\H)$, the adjoint of $T$ exists uniquely (\cite[Chapter I, Theorem 2.2]{Co96GTM}).
It is easy to see that $I^{*}=I$. More generally, there are many operators $T \in \B(\H)$ satisfying that $T^{*}=T$.

By using the definition of the adjoint of an operator, we introduce the notion of orthogonal projections.
\begin{definition}[Orthogonal projections]
We say that $Q \in \B(\H)$ is an orthogonal projection (or projection) when $Q=Q^2$ and $Q=Q^*$.
\end{definition}

Finally, we define isometries and partial isometries on $\H$.
\begin{definition}[Isometries and partial isometries]
We say that $T \in \B(\H)$ is an isometry when
\begin{equation*}
\norm{Tx}=\norm{x}
\end{equation*}
for every $x \in \H$. And $T$ is a partial isometry when there is a closed subspace $\M \subseteq \H$ such that
\begin{equation*}
\norm{Tx}=\norm{x}
\end{equation*}
and
\begin{equation*}
Ty=0
\end{equation*}
for every $x \in \M$ and $y \in \M^\bot$.
\end{definition}

\subsection{Reducing subspaces}

In this subsection, we recall the notion of invariant subspaces and reducing subspaces.
Let $T \in \B(\H)$ and $\M$ is a subspace of $\H$.
\begin{definition}[Invariant subspaces]
We say that $\M$ is an invariant subspace for $T$ when $T\M=\{Tx \mid x \in \M\} \subseteq \M$.
\end{definition}

\begin{definition}[Reducing subspaces]
We say that $\M$ is a reducing subspace for $T$ when $T\M \subseteq \M$ and $T^*\M \subseteq \M$.
\end{definition}

If $\M=\{0\}$ or $\H$, then $\M$ is an invariant and reducing subspace for every $T \in \B(\H)$. They are called trivial subspaces.
In this paper, we will call $\M$ is an invariant or reducing subspace for $T$ only if $\M$ is a closed subset of $\H$.

\subsection{$C^*$-algebras}

For bounded linear operators in $\B(\H)$, we introduce the operator norm which will induce the definition of $C^{*}$-algebras in $\B(\H)$. Hereinafter, we will assume $\K=\C$.

\begin{definition}[Operator norm of $\B(\H)$]
Let $\norm{\cdot}:\B(\H) \to \R$ be defined by:
\begin{equation*}
\norm{T}=\sup\{\norm{Tx} \mid x \in H, \norm{x} \le 1 \}, \text{ where } T \in \B(\H).
\end{equation*}
Then $\norm{\cdot}$ is a norm on $\B(\H)$. We call $\norm{\cdot}$ the operator norm of $\B(\H)$. We will assume $\B(\H)$ has a topology induced by the operator norm $\norm{\cdot}$ such that a net $\{ T_\lambda \} \subseteq \B(\H)$ converges to $T \in B(\H)$ if and only if $\{ \norm{T-T_\lambda} \} \subseteq \R$ converges to $0$.
\end{definition}

For $S,T \in \B(\H)$, the sum and the product of those are defined as follows:
\begin{equation*}
(S+T)x=Sx+Tx, \text{ where } x \in \H,
\end{equation*}
\begin{equation*}
(ST)x=S(Tx), \text{ where } x \in \H.
\end{equation*}
Then $S+T$ and $ST \in \B(\H)$. By those definitions, we can regard $\B(\H)$ as a (non-commutative) ring.
For $\alpha \in \K$ and $T \in \B(\H)$, we also define the bounded linear operator $\alpha T$ as follows:
\begin{equation*}
(\alpha T)x=\alpha (Tx), \text{ where } x \in \H.
\end{equation*}

We define $C^{*}$-algebras in $\B(\H)$ as subrings of $\B(\H)$.
\begin{definition}[$C^*$-algebras]
Let $\A \subseteq \B(\H)$. We say that $\A$ is a $C^*$-algebra when the following conditions hold:
\begin{enumerate}[(i)]
\item If $S,T \in \A$, then $S+T \in \A$;
\item If $S,T \in \A$, then $ST \in \A$;
\item If $\alpha \in \K$ and $T \in \A$, then $\alpha T \in \A$;
\item If $T \in \A$, then $T^* \in \A$;
\item $\A$ is closed for the topology induced by the operator norm $\norm{\cdot}$ of $\B(\H)$.
\end{enumerate}
If $\A$ is a $C^*$-algebra and $I \in \A$, then we say that $\A$ is unital.
For $\A \subseteq \B(\H)$, the $C^*$-algebra generated by $\A$ is the closure of the following subset of $\B(\H)$:
\begin{equation*}
\left\{\sum_{i=1}^n \alpha_i T_i \mid n \in \N, \alpha_i \in \K, T_i \text{ is an element or a product of elements in } \A\right\}.
\end{equation*}
The $C^*$-algebra generated by $\A$ is denoted by $C^*(\A)$. For a finite set $\A=\{S_1,S_2,\ldots,S_n\}$ $(n \in \N)$, we will often write $C^*(S_1,S_2,\ldots,S_n)$ instead of $C^*(\A)$.
\end{definition}

Let us recall the notion of commutants and cyclic vectors and state fundamental results related to those notion without the proofs. For more detail, please refer to \cite{Co00GSM}.
\begin{definition}[Commutants]
For $\A \subseteq \B(\H)$, we define the commutant of $\A$, denoted by $\A'$, as follows:
\begin{equation*}
\A'=\{S \in \B(\H) \mid TS=ST(\forall T \in \A)\}.
\end{equation*}
And, we define the bicommutant of $\A$, denoted by $\A''$, as the commutant of $A'$.
\end{definition}

\begin{definition}[Cyclic vectors]
Let $\A \subseteq \B(\H)$. A cyclic vector for $\A$ is $x \in \H$ such that
\begin{equation*}
\overline{\A x}=\overline{\{Tx \mid T \in \A\}}=\H.
\end{equation*}
\end{definition}

\begin{proposition}[{\cite[Chapter 2, Proposition 14.3]{Co00GSM}}, Separating vector]
Let $\A \subseteq \B(\H)$ be a unital $C^*$-algebra and $x \in \H$. Then the following statements are equivalent:
\begin{enumerate}[(i)]
\item $x$ is a cyclic vector for $\A$; \label{cyclic}
\item $x$ is a separating vector for $\A'$, i.e., if $T \in \A'$ and $Tx=0$, then $T=0$. \label{separating}
\end{enumerate}
\end{proposition}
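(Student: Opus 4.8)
\emph{Sketch of proof.} The plan is to establish the two implications separately. The forward direction rests only on continuity and density, while the converse uses the orthogonal decomposition (Theorem~\ref{thm:orthogonal decomposition}) together with the correspondence between reducing subspaces and projections lying in the commutant; the whole argument is the standard one for this classical fact.

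For (i) $\Rightarrow$ (ii), suppose $x$ is cyclic for $\A$ and take $T \in \A'$ with $Tx = 0$. First I would observe that for every $S \in \A$, $T(Sx) = S(Tx) = 0$ since $T$ commutes with $S$, so $T$ annihilates the linear subspace $\A x = \{Sx \mid S \in \A\}$. Because $T$ is bounded, hence continuous, and $\overline{\A x} = \H$ by hypothesis, it follows that $T = 0$ on all of $\H$. Thus $x$ is separating for $\A'$; note that unitality is not needed for this direction.

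For (ii) $\Rightarrow$ (i), I would set $\M = \overline{\A x}$ and show $\M = \H$. Since $I \in \A$ we have $x = Ix \in \A x \subseteq \M$. The subspace $\M$ is invariant under each $S \in \A$, because $S(\A x) \subseteq \A x$ ($\A$ being closed under products) and $S$ is continuous; moreover $\M^\bot$ is also invariant, since for $S \in \A$, $y \in \M^\bot$ and $z \in \M$ one has $\innpr{Sy}{z} = \innpr{y}{S^*z} = 0$, using $S^* \in \A$ and $S^*z \in \M$. Let $Q$ be the orthogonal projection of $\H$ onto $\M$, which exists by Theorem~\ref{thm:orthogonal decomposition}. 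Decomposing an arbitrary $v \in \H$ as $v = Qv + (v - Qv)$ with $Qv \in \M$ and $v - Qv \in \M^\bot$, the invariance of both $\M$ and $\M^\bot$ under $S$ gives $QSv = S(Qv) = SQv$, i.e. $Q \in \A'$. Then $I - Q \in \A'$ as well, and $(I - Q)x = x - Qx = 0$ because $x \in \M$; the separating hypothesis forces $I - Q = 0$, hence $\M = \H$, which says precisely that $x$ is cyclic for $\A$.

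I expect the only point needing real care to be the step $Q \in \A'$: one must exploit that $\A$ is $*$-closed to upgrade invariance of $\M$ to full reducibility (so that $\M^\bot$ is invariant too), and then translate "$\M$ and $\M^\bot$ invariant under $S$" into "$Q$ commutes with $S$" via the orthogonal decomposition $\H = \M \oplus \M^\bot$. Everything else — the density-and-continuity argument in the forward direction and the algebraic closure properties of $\A$ and $\A'$ — is routine.
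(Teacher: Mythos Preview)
Your argument is correct and is exactly the standard proof of this classical fact: density plus continuity for (i)$\Rightarrow$(ii), and the observation that the orthogonal projection onto $\overline{\A x}$ lies in $\A'$ for (ii)$\Rightarrow$(i). Note, however, that the paper does not actually prove this proposition; it is merely quoted from Conway's textbook \cite[Chapter~2, Proposition~14.3]{Co00GSM} as background material, so there is no in-paper proof to compare against. Your sketch would serve perfectly well as a self-contained justification, and the point you flag as needing care --- that $*$-closure of $\A$ is what makes $\M$ reducing rather than merely invariant, so that $Q\in\A'$ --- is indeed the only subtlety.
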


We will make use of the following theorem.
\begin{theorem}[{\cite[Chapter 5, Theorem 32.6]{Co00GSM}}, Irreducibility] \label{thm:Irreducibility}
If $\A \subseteq \B(\H)$ is a $C^*$-algebra, then the following statements are equivalent:
\begin{enumerate}[(i)]
\item $\A$ has no non-trivial reducing subspaces;
\item $\A'=\C I=\{ \alpha I \mid \alpha \in \C \}$;
\item If $x$ is any non-zero vector in $\H$, then $\H=\overline{\A x}$, that is, $x$ is a cyclic vector for $\A$.
\end{enumerate}
\end{theorem}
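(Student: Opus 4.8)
This is a standard result of operator theory, so the plan is to establish the cycle $(i)\Rightarrow(ii)\Rightarrow(iii)\Rightarrow(i)$, using throughout the elementary observation that, by Theorem~\ref{thm:orthogonal decomposition}, a closed subspace $\M\subseteq\H$ is reducing for $\A$ if and only if the orthogonal projection $P_\M$ onto $\M$ lies in $\A'$ (the condition $T\M\subseteq\M$ together with $T^*\M\subseteq\M$ is equivalent to $TP_\M=P_\M T$). In particular $(i)$ is exactly the assertion that $0$ and $I$ are the only projections in $\A'$.

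For $(i)\Rightarrow(ii)$: since $\A$ is a $C^*$-algebra it is a self-adjoint subset of $\B(\H)$, so $\A'$ is a von Neumann algebra (a unital $*$-subalgebra of $\B(\H)$ closed in the weak operator topology). If $\A'\neq\C I$, pick $T\in\A'\setminus\C I$ and write $T=A+iB$ with $A=\tfrac{1}{2}(T+T^*)$ and $B=\tfrac{1}{2i}(T-T^*)$ self-adjoint elements of $\A'$; at least one of them, say $A$, is not a scalar multiple of $I$, so $\sigma(A)$ contains two distinct points and the spectral theorem yields a spectral projection $E$ of $A$ with $E\neq0,I$. Since $\A'$ is closed under Borel functional calculus, $E\in\A'$, contradicting $(i)$. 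I expect this to be the main obstacle, as it rests on the spectral theorem and on the stability of von Neumann algebras under Borel functional calculus; granting those facts, the rest of the argument is routine.

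For $(ii)\Rightarrow(iii)$: assume $\A'=\C I$. Put $\mathcal{N}=\{y\in\H\mid Ty=0\text{ for all }T\in\A\}$; since $\A=\A^*$ one has $\mathcal{N}=(\overline{\A\H})^\bot$, and a direct computation shows $P_{\mathcal{N}}$ commutes with $\A$, hence $P_{\mathcal{N}}\in\{0,I\}$. Barring the degenerate case $\A=\{0\}$, this forces $\mathcal{N}=\{0\}$, i.e.\ $\overline{\A\H}=\H$. Now given $x\in\H$ with $x\neq0$, the subspace $\M:=\overline{\A x}$ is invariant for $\A$ (each $T\in\A$ is continuous and $\A$ is an algebra), hence reducing because $\A=\A^*$; by the observation above $P_\M\in\C I$, so $\M=\{0\}$ or $\M=\H$, and $x\notin\mathcal{N}$ rules out $\M=\{0\}$. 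Thus $\overline{\A x}=\H$, i.e.\ $x$ is cyclic for $\A$.

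For $(iii)\Rightarrow(i)$: if $\M$ were a non-trivial reducing subspace, then choosing any non-zero $x\in\M$ would give $\overline{\A x}\subseteq\M\subsetneq\H$, contradicting $(iii)$. Hence $\A$ has no non-trivial reducing subspace, which closes the cycle and completes the proof.
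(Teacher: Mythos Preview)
The paper does not supply its own proof of this theorem; it merely cites it from Conway \cite[Chapter~5, Theorem~32.6]{Co00GSM} as a known result and then uses it later. Your argument is the standard one and is correct: the only substantive step is $(i)\Rightarrow(ii)$, and your use of the spectral theorem inside the von Neumann algebra $\A'$ to produce a nontrivial projection is exactly the usual proof. The remaining implications are routine; in particular, your observation that for a self-adjoint set $\A$ every closed $\A$-invariant subspace is automatically reducing handles $(ii)\Rightarrow(iii)$ cleanly, and $(iii)\Rightarrow(i)$ is immediate. The only caveat is the degenerate case $\A=\{0\}$, which you flag; this is harmless here since every $C^*$-algebra actually considered in the paper is nonzero.
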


\section{Formulation of the Collatz conjecture by operators} \label{sec:Formulation of the Collatz conjecture}

In this section, $f$ will denote the Collatz map and $\H$ will denote a Hilbert space on $\C$ with $\dim{\H}=\aleph_0$. We will formulate some problems on operator theory which are arising from $f$.

\subsection{Formulation by a single operator}

Let $\{e_n\}_{n \in \N}$ be a C.O.N.S. for $\H$. We define $T:\H\to\H$ by:
\begin{equation*}
Te_n=e_{f(n)}, \text{ where }n \in \N,
\end{equation*}
and
\begin{equation*}
T\left(\sum_{n=1}^N x_n e_n\right)=\sum_{n=1}^N x_n Te_n, \text{ where } N \in \N, \text{ } x_n \in \C.
\end{equation*}
For $x=\sum_{n \in \N} x_n e_n$, we can compute the norm of $x$ as $\norm{x}=\sqrt{\sum_{n \in \N}\abs{x_n}^2}$. Notice that $f^{-1}(n)$ includes at most two elements for any $n \in \N$ and so $\abs{\sum_{m \in f^{-1}(n)}x_m}^2 \leq (\sum_{m \in f^{-1}(n)}\abs{x_m})^2 \leq 2\sum_{m \in f^{-1}(n)}\abs{x_m}^2$. Since
\begin{align*}
\sum_{n \in \N}\abs{\sum_{m \in f^{-1}(n)}x_m}^2
\leq\sum_{n \in \N}2\sum_{m \in f^{-1}(n)}\abs{x_m}^2
=2\sum_{n \in \N}\abs{x_n}^2
=2\norm{x}^2,
\end{align*}
there exists $\sum_{n \in \N}\left(\sum_{m \in f^{-1}(n)}x_m\right)e_n \in \H$. Thus, we can define $Tx$ as
\begin{equation*}
Tx=\sum_{n \in \N}x_n Te_n=\sum_{n \in \N}x_n e_{f(n)}=\sum_{n \in \N}\left(\sum_{m \in f^{-1}(n)}x_m\right)e_n,
\end{equation*}
and then, $T \in \B(\H)$.

In the setting above, we obtain the following relations.
\begin{lemma} \label{lem:Subset}
For every $n \in \N$,
\begin{equation*}
\overline{C^{*}(T)e_n} \subseteq \overline{\Span\{e_m \mid m \in \N, m \sim n\}}.
\end{equation*}
\end{lemma}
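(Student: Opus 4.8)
The plan is to exhibit one closed subspace that both contains $e_n$ and is invariant under $C^*(T)$, namely
\[
\M_n := \overline{\Span\{e_m \mid m \in \N,\ m \sim n\}}.
\]
Since $n \sim n$ we have $e_n \in \M_n$, so it is enough to show $C^*(T)\M_n \subseteq \M_n$; because $\M_n$ is closed, this already yields $\overline{C^*(T)e_n} \subseteq \M_n$, which is the claim.

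First I would record how $T$ and $T^*$ act on the basis. By definition $Te_m = e_{f(m)}$, and from the explicit formula $Tx = \sum_{n}\bigl(\sum_{m \in f^{-1}(n)} x_m\bigr)e_n$ one reads off $\innpr{Te_j}{e_n} = \delta_{f(j),n}$, hence $T^*e_m = \sum_{k \in f^{-1}(m)} e_k$, a sum of at most two basis vectors. The next point is that both operations respect $\sim$: if $m \sim n$, then $f(m) \sim n$, because $f(m) \in \orb(m;f) \cap \orb(f(m);f)$ forces $m \sim f(m)$ and transitivity does the rest; and for each $k \in f^{-1}(m)$ one has $k \sim n$, because $m = f(k) \in \orb(k;f) \cap \orb(m;f)$ forces $k \sim m \sim n$. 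Therefore $T$ and $T^*$ each carry the (non-closed) subspace $\Span\{e_m \mid m \sim n\}$ into itself, and since $T \in \B(\H)$ is bounded, hence continuous, they carry its closure $\M_n$ into itself. In particular $\M_n$ is a reducing subspace for $T$.

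Finally I would propagate this invariance to all of $C^*(T)$. By induction on the length of a word, every product of copies of $T$ and $T^*$ maps $\M_n$ into $\M_n$; hence so does every finite linear combination of such products, i.e.\ every element of the $*$-algebra generated by $T$. For arbitrary $A \in C^*(T)$ pick elements $A_\ell$ of that $*$-algebra with $\norm{A_\ell - A} \to 0$; for $v \in \M_n$ we get $A_\ell v \in \M_n$ and $\norm{A_\ell v - Av} \le \norm{A_\ell - A}\,\norm{v} \to 0$, so $Av \in \overline{\M_n} = \M_n$. Thus $C^*(T)\M_n \subseteq \M_n$, in particular $C^*(T)e_n \subseteq \M_n$, and taking closures completes the argument. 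I do not expect a genuine obstacle here: the only two places that need a little care are extracting $T^*e_m = \sum_{k \in f^{-1}(m)} e_k$ from the formula for $T$, and the closure step bridging the $*$-algebra generated by $T$ and its operator-norm closure $C^*(T)$ — and both rely only on boundedness of $T$ together with closedness of $\M_n$.
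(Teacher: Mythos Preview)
Your proof is correct and follows essentially the same approach as the paper: define $\M = \overline{\Span\{e_m \mid m \sim n\}}$, verify that $Te_l = e_{f(l)} \in \M$ and $T^*e_l = \sum_{k \in f^{-1}(l)} e_k \in \M$ for every $l \sim n$, and conclude that $\M$ is $C^*(T)$-invariant. The only cosmetic difference is that the paper computes $T^*e_l$ via the Fourier expansion (Theorem~\ref{thm:Fourier series}) rather than from the explicit formula for $T$, and it leaves the passage from ``$\M$ is $T,T^*$-invariant'' to ``$\overline{C^*(T)e_n} \subseteq \M$'' as a one-line remark, whereas you spell out the word-length induction and norm-limit argument explicitly.
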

\begin{proof}
Let $\M=\overline{\Span\{e_m \mid m \in \N, m \sim n\}}$. We claim the following statement:
\begin{itemize}
\item If $e_l \in \M$, then $Te_l, T^{*}e_l \in \M$.
\end{itemize}
Once this is done, as $e_n \in \M$ and $\M$ is a closed linear subspace, we obtain $\overline{C^{*}(T)e_n} \subseteq \M$.

In order to prove the claim, pick $l \in \N$ such that $l \sim n$. Then, $e_l \in \M$ and $f(l) \sim n$. Thus, it holds that $Te_l=e_{f(l)} \in \M$.

To see $T^{*}e_l \in \M$, we use Theorem \ref{thm:Fourier series} and we obtain
\begin{align*}
T^{*}e_l=\sum_{k \in \N}\innpr{T^{*}e_l}{e_k}e_k
=\sum_{k \in \N}\innpr{e_l}{Te_k}e_k
=\sum_{k \in \N}\innpr{e_l}{e_{f(k)}}e_k
=\sum_{k \in f^{-1}(l)}e_k.
\end{align*}
When $k \in f^{-1}(l)$, it follows that $k \sim l$. This implies that $k \sim n$ and we obtain $e_k \in \M$.
Therefore, $T^{*}e_l=\sum_{k \in f^{-1}(l)}e_k \in \M$.
\end{proof}

According to the former lemma, we get an operator theoretic statement which is a sufficient condition for the Collatz conjecture.
\begin{theorem} \label{thm:Sufficient condition 1}
If there exists $n \in \N$ such that $e_n$ is a cyclic vector for $C^{*}(T)$, then the Collatz conjecture holds.
\end{theorem}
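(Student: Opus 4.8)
The plan is to contrapose: suppose the Collatz conjecture fails, so there exists $m \in \N$ with $1 \notin \orb(m;f)$, and show that no $e_n$ can be cyclic for $C^*(T)$. The natural witness for non-cyclicity is the equivalence class of $1$ under $\sim$. Concretely, let $\M_1 = \overline{\Span\{e_k \mid k \in \N,\ k \sim 1\}}$. By Lemma \ref{lem:Subset} applied with $n = 1$, we have $\overline{C^*(T)e_1} \subseteq \M_1$. The key point is that $\M_1$ is a \emph{proper} closed subspace of $\H$: since $1 \not\sim m$ (otherwise $\orb(1;f)\cap\orb(m;f)\neq\emptyset$, but $1 \in \orb(1;f)$ would then force $1 \in \orb(m;f)$ after finitely many steps — here one uses that once an orbit hits $1$ it stays in the cycle $1 \to 4 \to 2 \to 1$, so $\orb(1;f)$ is finite and any orbit meeting it contains $1$), the vector $e_m$ is orthogonal to every spanning vector of $\M_1$, hence $e_m \in \M_1^\bot$, so $\M_1 \neq \H$. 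Therefore $\overline{C^*(T)e_1} \subseteq \M_1 \subsetneq \H$, and $e_1$ is not cyclic.

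Next I would handle an arbitrary $n$. The claim is that for \emph{every} $n \in \N$, $e_n$ fails to be cyclic once the conjecture fails. If $n \sim 1$, then $\M_n := \overline{\Span\{e_k \mid k \sim n\}} = \M_1$ (the equivalence class is the same), so the argument above gives $\overline{C^*(T)e_n} \subseteq \M_1 \subsetneq \H$. If $n \not\sim 1$, then by definition $m' \notin \M_n^\bot$ forces nothing directly, but now $e_1 \notin \M_n$: indeed $1 \not\sim n$ means $e_1$ is orthogonal to every spanning vector of $\M_n$, so $\M_n \subsetneq \H$ and again $e_n$ is not cyclic by Lemma \ref{lem:Subset}. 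Either way, no standard basis vector is cyclic, which is the contrapositive of the theorem.

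The main obstacle — really the only non-routine point — is verifying that $\M_1$ (equivalently, any $\M_n$) is a proper subspace, which reduces to the combinatorial fact that $n \sim 1$ holds for \emph{only some} $n$, not all $n$, precisely under the negation of the conjecture. This is essentially a restatement of what we are assuming, but it must be spelled out carefully: one needs that $\orb(1;f) = \{1,2,4\}$ is finite and that $x \sim 1$ iff $1 \in \orb(x;f)$, so that the failure of the conjecture at $m$ gives a concrete basis vector $e_m$ outside $\M_1$. A secondary, purely bookkeeping step is that $\M_n$ depends only on the $\sim$-class of $n$, which is immediate from the definition of $\sim$ as an equivalence relation. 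Everything else is a direct application of Lemma \ref{lem:Subset} and Theorem \ref{thm:Irreducibility}/the definition of cyclic vector.
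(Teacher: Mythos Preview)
Your argument is correct and rests on the same two ingredients as the paper's proof: Lemma~\ref{lem:Subset} and the fact that $1$ is a periodic point of $f$ (so that $m\sim 1$ if and only if $1\in\orb(m;f)$). The paper argues directly rather than by contraposition---from $\H=\overline{C^*(T)e_n}\subseteq\overline{\Span\{e_m\mid m\sim n\}}$ it reads off $m\sim n$ for all $m$, hence $m\sim 1$ for all $m$---which is marginally cleaner since it avoids your case split on whether $n\sim 1$.
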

\begin{proof}
When $e_n$ is a cyclic vector for $C^{*}(T)$,
\begin{align*}
\H=\overline{C^{*}(T)e_n}\subseteq \overline{\Span\{e_m \mid m \in \N, m \sim n\}}
\end{align*}
by Lemma \ref{lem:Subset}.
It follows that $m \sim n$ for every $m \in \N$. Therefore, $1 \sim n$ and then, $1 \sim m$ for every $m \in \N$.
Since $1$ is a periodic point of $f$, $1 \in \orb(m;f)$. Thus, we can obtain the conclusion.
\end{proof}

The next corollary holds immediately.
\begin{corollary}[Sufficient condition]\label{cor:SingleS}
If $C^{*}(T)$ has no non-trivial reducing subspaces, then the Collatz conjecture holds.
\end{corollary}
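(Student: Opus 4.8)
The plan is to deduce this immediately from Theorem \ref{thm:Sufficient condition 1} together with the characterization of irreducibility recorded in Theorem \ref{thm:Irreducibility}. First I would note that $C^{*}(T)$ is, by the way it was constructed, a $C^{*}$-algebra contained in $\B(\H)$, so Theorem \ref{thm:Irreducibility} is applicable to it. The hypothesis that $C^{*}(T)$ has no non-trivial reducing subspaces is exactly statement (i) of that theorem; hence statement (iii) holds as well, namely every non-zero vector of $\H$ is a cyclic vector for $C^{*}(T)$.

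Next, since the standing assumption gives $\dim{\H}=\aleph_0>0$, the basis vector $e_1$ (indeed any $e_n$) is a non-zero element of $\H$, and therefore $e_1$ is a cyclic vector for $C^{*}(T)$. In particular there exists $n \in \N$ — one may simply take $n=1$ — such that $e_n$ is a cyclic vector for $C^{*}(T)$, which is precisely the hypothesis of Theorem \ref{thm:Sufficient condition 1}. Applying that theorem then yields the Collatz conjecture, completing the argument.

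I do not expect any real obstacle here: the mathematical substance is already carried by Lemma \ref{lem:Subset} and the general facts quoted in Section \ref{sec:Hilbert spaces}, and the corollary is a purely formal matching of hypotheses. The only point worth making explicit is that $\H \neq \{0\}$, so that the set of non-zero vectors — and hence the particular basis vector invoked — is non-empty; this is guaranteed by the assumption $\dim{\H}=\aleph_0$.
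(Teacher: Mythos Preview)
Your proposal is correct and follows essentially the same approach as the paper: invoke Theorem \ref{thm:Irreducibility} to conclude that every non-zero vector is cyclic for $C^{*}(T)$, specialize to some basis vector $e_n$, and then apply Theorem \ref{thm:Sufficient condition 1}. The only difference is that you make explicit a couple of trivial points (that $C^{*}(T)\subseteq\B(\H)$ and that $\H\neq\{0\}$) which the paper leaves implicit.
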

\begin{proof}
From Theorem \ref{thm:Irreducibility}, if $C^{*}(T)$ has no non-trivial reducing subspaces, then every $x \in \H\backslash\{0\}$ is a cyclic vector for $C^{*}(T)$. Let $x=e_n$ for some $n \in \N$. Then the Collatz conjecture holds by Theorem \ref{thm:Sufficient condition 1}.
\end{proof}

\subsection{Formulation by two operators}

Let $T_1$, $T_2 \in \B(\H)$ be defined by:
\begin{align*}
T_1e_n&=
\begin{cases}
e_{3n+1}, & n:\text{odd},\\
0, & n:\text{even},
\end{cases}
\\
T_2e_n&=
\begin{cases}
0, & n:\text{odd},\\
e_{n/2}, & n:\text{even}.
\end{cases}
\end{align*}

It is not so hard to see the following statement.
\begin{lemma}
The operators $T_1$ and $T_2$ are partial isometries.
\end{lemma}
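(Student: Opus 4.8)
The plan is to exhibit, for each of $T_1$ and $T_2$, an explicit closed subspace of $\H$ on which the operator acts isometrically and which, together with its orthogonal complement, witnesses the definition of a partial isometry. Set $\M_1=\overline{\Span\{e_n \mid n \in \N,\ n \text{ odd}\}}$ and $\M_2=\overline{\Span\{e_n \mid n \in \N,\ n \text{ even}\}}$. By the orthogonal decomposition theorem (Theorem \ref{thm:orthogonal decomposition}) and the Fourier expansion (Theorem \ref{thm:Fourier series}), one has $\M_1^\bot=\M_2$ and $\M_2^\bot=\M_1$, and every $x \in \H$ can be written uniquely as $x=\sum_{n \text{ odd}}x_n e_n+\sum_{n \text{ even}}x_n e_n$ with $\sum_{n \in \N}\abs{x_n}^2=\norm{x}^2$.

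First I would record that $T_1$ and $T_2$ are indeed bounded operators, exactly as in the construction of $T$ earlier in this section: for $x=\sum_{n \in \N}x_n e_n$ we have $T_1x=\sum_{n \text{ odd}}x_n e_{3n+1}$, and since $n \mapsto 3n+1$ is injective the family $\{e_{3n+1}\mid n \text{ odd}\}$ is orthonormal, so the series converges to an element of $\H$ with $\norm{T_1x}^2=\sum_{n \text{ odd}}\abs{x_n}^2 \le \norm{x}^2$; hence $T_1 \in \B(\H)$ with $\norm{T_1}\le 1$. The same computation, with $n \mapsto n/2$ in place of $n \mapsto 3n+1$, gives $T_2 \in \B(\H)$.

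Next I would verify the two defining conditions. For $x \in \M_1$, say $x=\sum_{n \text{ odd}}x_n e_n$, the orthonormality of $\{e_{3n+1}\mid n \text{ odd}\}$ yields $\norm{T_1x}^2=\sum_{n \text{ odd}}\abs{x_n}^2=\norm{x}^2$; and for $y \in \M_1^\bot=\M_2$, say $y=\sum_{n \text{ even}}y_n e_n$, we get $T_1y=0$ because $T_1e_n=0$ for every even $n$. Thus $T_1$ is a partial isometry with witnessing subspace $\M_1$. Replacing $\M_1$ by $\M_2$, ``odd'' by ``even'', and $3n+1$ by $n/2$ throughout gives the corresponding conclusion for $T_2$ with witnessing subspace $\M_2$.

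The argument is essentially bookkeeping, and the only point that genuinely requires attention is ensuring that no cancellation occurs when computing the norms $\norm{T_1x}$ and $\norm{T_2x}$ — that is, that the index maps $n \mapsto 3n+1$ on odd $n$ and $n \mapsto n/2$ on even $n$ are injective so that the relevant images of basis vectors remain orthonormal. This is immediate, so I do not expect a substantive obstacle; the mild care needed lies in spelling out convergence and well-definedness of $T_1x$ and $T_2x$, which is handled exactly as for $T$ above.
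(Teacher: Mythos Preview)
Your proof is correct and follows essentially the same approach as the paper: you define the same subspaces $\M_1$ and $\M_2$, verify that $T_i$ is isometric on $\M_i$ via the orthonormality of the image basis vectors, and that $T_i$ vanishes on $\M_i^\bot$. The only difference is that you add an explicit remark on boundedness and on the injectivity of $n\mapsto 3n+1$ and $n\mapsto n/2$, which the paper leaves implicit.
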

\begin{proof}
Let $\M_1=\overline{\Span\{e_n \mid n:\text{odd}\}}$ and $\M_2=\M_1^{\bot}=\overline{\Span\{e_n \mid n:\text{even}\}}$.

For every $x=\sum_{n:\text{odd}}\innpr{x}{e_n}e_n \in \M_1$,
\begin{align*}
\norm{x}^2&=\sum_{n:\text{odd}}\abs{\innpr{x}{e_n}}^2,\\
T_1x&=\sum_{n:\text{odd}}\innpr{x}{e_n}e_{3n+1},\\
T_2x&=0,
\end{align*} 
and
\begin{align*}
\norm{T_1x}^2=\sum_{n:\text{odd}}\abs{\innpr{x}{e_n}}^2=\norm{x}^2.
\end{align*}
Thus, $\norm{T_1x}=\norm{x}$.

For every $y=\sum_{n:\text{even}}\innpr{y}{e_n}e_n \in \M_2=\M_1^{\bot}$,
\begin{align*}
\norm{y}^2&=\sum_{n:\text{even}}\abs{\innpr{y}{e_n}}^2,\\
T_1y&=0,\\
T_2y&=\sum_{n:\text{even}}\innpr{y}{e_n}e_{n/2},\\
\end{align*}
and
\begin{align*}
\norm{T_2y}^2=\sum_{n:\text{even}}\abs{\innpr{y}{e_n}}^2=\norm{y}^2.
\end{align*}
Thus, $\norm{T_2y}=\norm{y}$.
Therefore, $T_1$ and $T_2$ are partial isometries.
\end{proof}

For $T_1$, $T_2$ defined as above, the following relations hold.
\begin{lemma} \label{lem:Equality 1}
For every $n \in \N$,
\begin{equation*}
\overline{C^{*}(T_1,T_2)e_n} = \overline{\Span\{e_m \mid m \in \N, m \sim n\}}.
\end{equation*}
\end{lemma}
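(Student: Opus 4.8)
The plan is to establish the two inclusions separately; the inclusion ``$\subseteq$'' runs parallel to Lemma~\ref{lem:Subset}, while the reverse inclusion ``$\supseteq$'' carries the new content. Throughout write $\A=C^{*}(T_1,T_2)$ and $\M=\overline{\Span\{e_m \mid m \in \N, m \sim n\}}$. First I would record the action of the adjoints on the C.O.N.S.\ by the same Fourier-expansion computation $T_i^{*}e_l=\sum_{k\in\N}\innpr{e_l}{T_ie_k}e_k$ used in Lemma~\ref{lem:Subset}, obtaining
\begin{align*}
T_1^{*}e_l=
\begin{cases}
e_{(l-1)/3}, & l \equiv 4 \pmod 6,\\
0, & \text{otherwise,}
\end{cases}
\qquad\text{and}\qquad
T_2^{*}e_l=e_{2l},
\end{align*}
the congruence being exactly the condition that $l=3k+1$ for some odd $k$.

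For ``$\subseteq$'' I would check that $\M$ is a reducing subspace for each of $T_1$ and $T_2$. If $e_l$ is a basis vector with $l \sim n$, then each of $T_1e_l,\,T_2e_l,\,T_1^{*}e_l,\,T_2^{*}e_l$ is either $0$ or a basis vector $e_{l'}$; moreover $T_1e_l,\,T_2e_l\in\{0,e_{f(l)}\}$, while $T_1^{*}e_l=e_{(l-1)/3}$ and $T_2^{*}e_l=e_{2l}$ satisfy $f((l-1)/3)=l$ and $f(2l)=l$. In every case $\orb(l';f)$ and $\orb(l;f)$ intersect, so $l'\sim l\sim n$ and $e_{l'}\in\M$. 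Since $\M$ is closed and invariant under $T_1,T_2,T_1^{*},T_2^{*}$, it is invariant under the $*$-algebra they generate and hence under its norm closure $\A$; as $e_n\in\M$ we conclude $\overline{\A e_n}\subseteq\M$.

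For ``$\supseteq$'' the key point is that $(T_1+T_2)e_k=e_{f(k)}$ for every $k\in\N$, so $(T_1+T_2)^{j}e_n=e_{f^{j}(n)}\in\A e_n$ for all $j\in\{0\}\cup\N$; this already places the whole forward orbit of $n$ inside $\overline{\A e_n}$. Given an arbitrary $m$ with $m\sim n$, pick $j,k\in\{0\}\cup\N$ with $f^{j}(m)=f^{k}(n)=:p$ (if $m=n$ take $j=k=0$). Then $e_p=(T_1+T_2)^{k}e_n\in\A e_n$, and I would walk the finite orbit segment $m=m_0\to m_1\to\cdots\to m_j=p$ \emph{backwards}: when $m_i$ is odd, $m_{i+1}=3m_i+1\equiv 4\pmod 6$ and $T_1^{*}e_{m_{i+1}}=e_{(m_{i+1}-1)/3}=e_{m_i}$; when $m_i$ is even, $m_{i+1}=m_i/2$ and $T_2^{*}e_{m_{i+1}}=e_{2m_{i+1}}=e_{m_i}$. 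Composing these $j$ steps, $e_m=S_1^{*}S_2^{*}\cdots S_j^{*}e_p=S_1^{*}\cdots S_j^{*}(T_1+T_2)^{k}e_n\in\A e_n$ with each $S_i\in\{T_1,T_2\}$. Thus $e_m\in\overline{\A e_n}$ for every $m\sim n$, and since $\overline{\A e_n}$ is a closed subspace it contains the closed linear span $\M$ of these vectors.

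The step I expect to be the main obstacle is pinning down the adjoint formulas and matching parities correctly: one must verify that $T_1^{*}$ recovers $e_k$ from $e_{f(k)}$ \emph{exactly} when $k$ is odd --- equivalently, that the nonzero values of $T_1^{*}$ occur precisely at the basis vectors $e_l$ with $l\equiv 4\pmod 6$ --- so that a backward move along an ``$n\mapsto 3n+1$'' edge is always available, and similarly that $T_2^{*}$ inverts every halving step. Once these are nailed down, both inclusions are routine, with the ``$\subseteq$'' half essentially a verbatim adaptation of the argument in Lemma~\ref{lem:Subset}.
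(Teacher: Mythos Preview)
Your proof is correct, and its overall architecture matches the paper's: establish $\subseteq$ by showing $\M$ is a closed reducing subspace for $T_1,T_2$ (the paper simply cites Lemma~\ref{lem:Subset} here), and establish $\supseteq$ by moving forward from $e_n$ to $e_p=e_{f^k(n)}$ with products of $T_1,T_2$, then backward from $e_p$ to $e_m$ with adjoints.

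The genuine difference is in how the backward step is justified. You compute $T_1^{*}$ and $T_2^{*}$ explicitly on basis vectors (via the congruence $l\equiv 4\pmod 6$ and the identity $T_2^{*}e_l=e_{2l}$) and then simply read off $S_i^{*}e_{m_{i+1}}=e_{m_i}$ step by step. The paper never writes down these formulas; instead it argues abstractly: knowing $(T_{i_1}\cdots T_{i_k})e_m=e_{f^k(m)}$ gives $\langle (T_{i_1}\cdots T_{i_k})^{*}e_{f^k(m)},e_m\rangle=1$, and since each $T_i$ is a partial isometry the product has norm at most $1$, so Parseval forces $(T_{i_1}\cdots T_{i_k})^{*}e_{f^k(m)}=e_m$ with no further components. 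Your approach is more concrete and arguably easier to follow for this specific map; the paper's norm argument is more portable --- it is exactly what is reused verbatim in Lemma~\ref{lem:Equality 4} for an arbitrary $f$ satisfying the bounded condition, where no explicit adjoint formula is available. Your use of $(T_1+T_2)^k e_n=e_{f^k(n)}$ to reach $e_p$ is a minor stylistic variation on the paper's choice of an explicit word $T_{j_1}\cdots T_{j_l}$.
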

\begin{proof}
The proof of $\subseteq$ is the same as Lemma \ref{lem:Subset}.

To prove the converse, it suffices to see that $e_m \in \overline{C^*(T_1,T_2)e_n}$ for every $m \in \N$ satisfying $m \sim n$.
Let us pick $m \in \N$ such that $m \sim n$. Then there exist two non-negative integers $k,l$ satisfying $f^k(m)=f^l(n)$. We can find a $k$-tuple $(i_1,i_2,\cdots,i_k) \in \{1,2\}^{*}$ and a $l$-tuple $(j_1,j_2,\cdots,j_l) \in \{1,2\}^{*}$ satisfying
\begin{align*}
(T_{i_1}T_{i_2} \cdots T_{i_k})e_m&=e_{f^k(m)},\\
(T_{j_1}T_{j_2} \cdots T_{j_l})e_n&=e_{f^l(n)}.
\end{align*}
It follows that
\begin{align*}
\innpr{(T_{i_1}T_{i_2} \cdots T_{i_k})^{*}e_{f^k(m)}}{e_m}
&=\innpr{e_{f^k(m)}}{(T_{i_1}T_{i_2} \cdots T_{i_k})e_m}\\
&=\innpr{e_{f^k(m)}}{e_{f^k(m)}}=1.
\end{align*}
Since $T_1$ and $T_2$ are partial isometries, $\norm{(T_{i_1}T_{i_2} \cdots T_{i_k})^{*}} \leq 1$. Thus,
\begin{align*}
1&=\abs{\innpr{(T_{i_1}T_{i_2} \cdots T_{i_k})^{*}e_{f^k(m)}}{e_m}}^2\\
&\leq\sum_{p \in \N}\abs{\innpr{(T_{i_1}T_{i_2} \cdots T_{i_k})^{*}e_{f^k(m)}}{e_p}}^2
=\norm{(T_{i_1}T_{i_2} \cdots T_{i_k})^{*}e_{f^k(m)}}^2 \leq 1.
\end{align*}
That is, $\innpr{(T_{i_1}T_{i_2} \cdots T_{i_k})^{*}e_{f^k(m)}}{e_p}=0$ if $m \neq p$.
Hence
\begin{align*}
(T_{i_1}T_{i_2} \cdots T_{i_k})^{*}e_{f^k(m)}
=\innpr{(T_{i_1}T_{i_2} \cdots T_{i_k})^{*}e_{f^k(m)}}{e_m}e_m
=e_m
\end{align*}
and
\begin{align*}
e_m&=(T_{i_1}T_{i_2} \cdots T_{i_k})^{*}e_{f^k(m)}
=(T_{i_1}T_{i_2} \cdots T_{i_k})^{*}e_{f^l(n)}\\
&=(T_{i_1}T_{i_2} \cdots T_{i_k})^{*}(T_{j_1}T_{j_2} \cdots T_{j_l})e_n.
\end{align*}
Therefore, $e_m \in \overline{C^{*}(T_1,T_2)e_n}$ and it completes the proof.
\end{proof}

By using the former lemma, we can obtain the following theorem and corollary.
\begin{theorem} \label{thm:Equivalent 1}
The following statements are equivalent:
\begin{enumerate}[(i)]
\item There exists $n \in \N$ such that $e_n$ is a cyclic vector for $C^{*}(T_1,T_2)$; \label{TwoOperators}
\item The Collatz conjecture holds. \label{TwoCollatz}
\end{enumerate}
\end{theorem}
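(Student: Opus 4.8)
The plan is to read both implications directly off Lemma~\ref{lem:Equality 1}, which identifies $\overline{C^{*}(T_1,T_2)e_n}$ with $\overline{\Span\{e_m \mid m \in \N,\ m \sim n\}}$ for every $n \in \N$. The substantive content of the theorem is thus already contained in that lemma, and what remains is bookkeeping about the equivalence relation $\sim$ together with elementary facts about Hilbert spaces.

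For the implication \emph{Collatz conjecture $\Rightarrow$ cyclic vector}, I would take $n = 1$. Assuming the conjecture, $1 \in \orb(m;f)$ for every $m \in \N$; since also $1 \in \orb(1;f)$, the orbits of $m$ and $1$ intersect, so $m \sim 1$ for all $m$. Hence $\{e_m \mid m \sim 1\} = \{e_n \mid n \in \N\}$, and because $\{e_n\}_{n \in \N}$ is a C.O.N.S.\ for $\H$ (Theorem~\ref{thm:Fourier series}), the closure of its span is all of $\H$. Lemma~\ref{lem:Equality 1} then yields $\overline{C^{*}(T_1,T_2)e_1} = \H$, that is, $e_1$ is a cyclic vector for $C^{*}(T_1,T_2)$.

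For the converse, suppose $e_n$ is a cyclic vector for $C^{*}(T_1,T_2)$. Then $\H = \overline{C^{*}(T_1,T_2)e_n} = \overline{\Span\{e_m \mid m \in \N,\ m \sim n\}}$ by Lemma~\ref{lem:Equality 1}. The key step is to conclude that the $\sim$-class of $n$ is all of $\N$: if some $m_0 \in \N$ satisfied $m_0 \not\sim n$, then by orthonormality $e_{m_0}$ would be orthogonal to every $e_m$ with $m \sim n$, hence orthogonal to the closed span displayed above, which is impossible since $e_{m_0}$ is a nonzero vector of that space. So $m \sim n$ for every $m \in \N$; in particular $1 \sim n$, and transitivity of $\sim$ gives $1 \sim m$ for all $m$. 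Finally, since $1$ is a periodic point of $f$ (the cycle $1 \to 4 \to 2 \to 1$), the condition $\orb(m;f) \cap \orb(1;f) \neq \emptyset$ forces $1 \in \orb(m;f)$: a common point $p$ lies in $\orb(1;f) = \{1,2,4\}$, from which $f$ returns to $1$, while $\orb(p;f) \subseteq \orb(m;f)$. This is exactly the Collatz conjecture, completing the proof.

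I do not expect a genuine obstacle: the heavy lifting was done in the proof of Lemma~\ref{lem:Equality 1} (the computation with the partial isometries $T_1$ and $T_2$). The two points that deserve a little care are the orthogonality argument showing that a proper $\sim$-class cannot produce a dense span, and the passage --- already used in the proof of Theorem~\ref{thm:Sufficient condition 1} --- from ``the orbit of $m$ meets the orbit of $1$'' to ``$1 \in \orb(m;f)$'', which relies on $1$ being periodic. One could instead route the argument through Theorem~\ref{thm:Irreducibility}, but phrasing it via a single cyclic basis vector is more direct.
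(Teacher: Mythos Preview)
Your proposal is correct and follows essentially the same approach as the paper: both directions are read off Lemma~\ref{lem:Equality 1}, with $e_1$ chosen as the cyclic vector in the backward implication, and the forward implication concluding that the $\sim$-class of $n$ is all of $\N$. Your version simply makes explicit two steps the paper leaves terse---the orthogonality argument showing the span cannot be all of $\H$ unless every $m$ satisfies $m\sim n$, and the passage from $1\sim m$ to $1\in\orb(m;f)$ via periodicity of $1$---but the underlying argument is identical.
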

\begin{proof}

$(\ref{TwoOperators}) \implies (\ref{TwoCollatz})$.
Suppose that there exists $n \in \N$ such that $e_n$ is a cyclic vector for $C^{*}(T_1,T_2)$.
By using Lemma \ref{lem:Equality 1}, it follows that
\begin{align*}
\H=\overline{C^{*}(T_1,T_2)e_n}=\overline{\Span\{e_m \mid m \in \N, m \sim n\}}.
\end{align*}
Then, $m \sim n$ for every $m \in \N$ and it implies that the Collatz conjecture holds.

$(\ref{TwoCollatz}) \implies (\ref{TwoOperators})$.
Assume that the Collatz conjecture holds. Then $1 \sim m$ for every $m \in \N$.
By using Lemma \ref{lem:Equality 1} again,
\begin{align*}
\overline{C^{*}(T_1,T_2)e_1}&=\overline{\Span\{e_m \mid m \in \N, m \sim 1\}}
=\overline{\Span\{e_m \mid m \in \N\}}=\H.
\end{align*}
Thus, $e_1$ is a cyclic vector for $C^{*}(T_1,T_2)$.

\end{proof}

\begin{corollary}[Sufficient condition] \label{cor:Sufficient condition 2}
If $C^{*}(T_1,T_2)$ has no non-trivial reducing subspaces, then the Collatz conjecture holds.
\end{corollary}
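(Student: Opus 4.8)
The plan is to deduce this corollary from Theorem~\ref{thm:Equivalent 1} together with the irreducibility dictionary of Theorem~\ref{thm:Irreducibility}, exactly as Corollary~\ref{cor:SingleS} was obtained from Theorem~\ref{thm:Sufficient condition 1}. Since $C^{*}(T_1,T_2)$ is by construction a $C^{*}$-algebra in $\B(\H)$, Theorem~\ref{thm:Irreducibility} applies to it directly, with no further hypotheses needed.

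First I would invoke the implication (i)$\implies$(iii) of Theorem~\ref{thm:Irreducibility}: the assumption that $C^{*}(T_1,T_2)$ has no non-trivial reducing subspaces yields that \emph{every} non-zero vector of $\H$ is cyclic for $C^{*}(T_1,T_2)$. In particular, fixing any $n \in \N$, the basis vector $e_n$ is a non-zero element of $\H$, so $\overline{C^{*}(T_1,T_2)e_n}=\H$; that is, statement~(\ref{TwoOperators}) of Theorem~\ref{thm:Equivalent 1} holds. Then I would apply the implication (\ref{TwoOperators})$\implies$(\ref{TwoCollatz}) of that theorem to conclude that the Collatz conjecture holds. This is the entire argument.

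I do not anticipate any genuine obstacle: the statement is a one-line consequence of results already established, since all the real content has been absorbed into Lemma~\ref{lem:Equality 1} and Theorem~\ref{thm:Equivalent 1}. The only point worth a moment's care is that one should use the version of Theorem~\ref{thm:Irreducibility} valid for an arbitrary $C^{*}$-algebra $\A \subseteq \B(\H)$ rather than assuming $I \in C^{*}(T_1,T_2)$, and as stated in the excerpt that theorem is indeed formulated without a unitality requirement, so nothing needs to be checked. (Alternatively one could argue contrapositively: if the Collatz conjecture fails, then by Theorem~\ref{thm:Equivalent 1} no $e_n$ is cyclic, and Lemma~\ref{lem:Equality 1} then exhibits $\M=\overline{\Span\{e_m \mid m \in \N,\ m \sim n\}}$ as a proper non-zero reducing subspace for $C^{*}(T_1,T_2)$; but chaining the two theorems is cleaner and is the route I would take.)
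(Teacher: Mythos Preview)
Your proposal is correct and matches the paper's own proof essentially verbatim: the paper also notes that the hypothesis makes every $e_n$ cyclic (implicitly via Theorem~\ref{thm:Irreducibility}) and then invokes Theorem~\ref{thm:Equivalent 1}. Your added remarks on unitality and the contrapositive route are sound but not needed for the argument.
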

\begin{proof}
If $C^{*}(T_1,T_2)$ has no non-trivial reducing subspaces, then $e_n$ is a cyclic vector for every $n \in \N$. Then the Collatz conjecture holds by Theorem \ref{thm:Equivalent 1}.
\end{proof}

In the rest of this subsection, we will prove the necessity of the statement that $C^{*}(T_1,T_2)$ has no non-trivial reducing subspaces for the Collatz conjecture.
To begin with the proof of necessity, we will show the following lemma.
\begin{lemma} \label{lem:Projection 1}
If $Q \in C^{*}(T_1,T_2)'$ is a projection, then $Q e_1 \in \{0,e_1\}$.
\end{lemma}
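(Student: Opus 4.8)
The plan is to exploit the $3$-cycle $1 \to 4 \to 2 \to 1$ of the Collatz map. Set $W := T_2 T_2 T_1$. Since $Q \in C^{*}(T_1,T_2)'$ commutes with $T_1$ and $T_2$, it commutes with the product $W$. Directly from the definitions, $W e_1 = T_2 T_2 T_1 e_1 = T_2 T_2 e_4 = T_2 e_2 = e_1$. Hence the vector $v := Q e_1$ satisfies $W v = Q W e_1 = Q e_1 = v$, and therefore $W^k v = v$ for every $k \in \N$.

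Next I would read off the coordinates of $v$ against $\{e_m\}_{m \in \N}$. Using $W^k v = v$ and the defining property of the adjoint, $\innpr{v}{e_m} = \innpr{W^k v}{e_m} = \innpr{v}{(W^{*})^k e_m}$ for all $m, k \in \N$, so everything reduces to computing $(W^{*})^k e_m$. From $T_2^{*} e_m = e_{2m}$ one gets $W^{*} e_m = T_1^{*}(T_2^{*})^2 e_m = T_1^{*} e_{4m}$, and by the description of $T_1^{*}$ this equals $e_{(4m-1)/3}$ when $m \equiv 1 \pmod 3$ (in which case $(4m-1)/3$ is automatically odd, so $T_1^{*}$ does not annihilate it) and equals $0$ otherwise. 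Iterating, $(W^{*})^k e_m$ is nonzero only if the sequence $m_0 = m$, $m_{i+1} = (4 m_i - 1)/3$ stays $\equiv 1 \pmod 3$ for $i = 0, \dots, k-1$, in which case a straightforward induction on $k$ yields the identity $3^k(m_k - 1) = 4^k(m - 1)$.

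From this identity the lemma follows quickly. If $m \neq 1$ and $(W^{*})^k e_m \neq 0$, then $3^k \mid 4^k(m-1)$, hence $3^k \mid m-1$ since $\gcd(3,4)=1$, which is impossible once $3^k > m-1$. So for every $m \neq 1$ and all sufficiently large $k$ we get $(W^{*})^k e_m = 0$, whence $\innpr{v}{e_m} = 0$; by Theorem \ref{thm:Fourier series} this forces $v = \innpr{v}{e_1} e_1$. Finally, since $Q$ is a projection, $\innpr{v}{e_1} = \innpr{Q e_1}{e_1} = \innpr{Q e_1}{Q e_1} = \norm{v}^2 = \abs{\innpr{v}{e_1}}^2$, so $\innpr{v}{e_1} \in \{0,1\}$ and therefore $Q e_1 = v \in \{0, e_1\}$.

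The main obstacle is the adjoint computation for $(W^{*})^k e_m$ together with the accompanying modular arithmetic: one must verify that $W^{*}$ genuinely sends $e_m$ to $e_{(4m-1)/3}$ (rather than to $0$) precisely when $m \equiv 1 \pmod 3$, check the parity statement that keeps $T_1^{*}$ from interfering, and run the induction giving $3^k(m_k - 1) = 4^k(m-1)$, which is what forces the vanishing for all $m \neq 1$. The remaining ingredients — the commutation $QW = WQ$ and the projection identity $\innpr{v}{e_1} = \norm{v}^2$ — are routine.
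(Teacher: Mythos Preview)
Your proof is correct and rests on the same key observation as the paper's: the word $W=T_2^2T_1$ fixes $e_1$, and since $Q$ commutes with $W$ (and $W^*$) one can transfer the commutation into the inner products $\innpr{Qe_1}{e_m}$. The difference is in how the vanishing for $m\neq1$ is extracted. The paper argues by \emph{descent along $W$}: from $\innpr{Qe_1}{e_n}=\innpr{Qe_1}{We_n}$ it deduces that $We_n\neq0$, hence $We_n=e_{(3n+1)/4}$ with $(3n+1)/4<n$, contradicting a minimal choice of $n>1$. You instead iterate in the \emph{adjoint direction}: from $\innpr{Qe_1}{e_m}=\innpr{Qe_1}{(W^*)^k e_m}$ you track $m\mapsto(4m-1)/3$, derive the closed form $3^k(m_k-1)=4^k(m-1)$, and use the divisibility $3^k\mid m-1$ to force eventual annihilation. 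These are dual implementations of the same idea (indeed $(3n+1)/4$ and $(4m-1)/3$ are inverse maps); the paper's descent is a touch shorter since $(3n+1)/4<n$ is immediate, whereas your route needs the explicit recursion identity, but both are clean and valid.
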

\begin{proof}
We remark that $T_2^2T_1e_1=e_1$ and $e_1=T_1^*(T_2^*)^2$.

Let $Q \in C^{*}(T_1,T_2)'$ be a projection and assume $Q e_1 \neq 0$. We will show $Q e_1=e_1$.

If there exists $n \in \N\backslash\{1\}$ satisfying $\innpr{Q e_1}{e_n} \neq 0$, then we assume $n$ is the minimum integer as such. We have
\begin{align*}
0 \neq \innpr{Q e_1}{e_n}=\innpr{QT_1^*(T_2^*)^2e_1}{e_n}=\innpr{T_1^*(T_2^*)^2Qe_1}{e_n}=\innpr{Q e_1}{T_2^2T_1 e_n}.
\end{align*}
It follows that $T_2^2T_1 e_n \neq 0$. Therefore,
\begin{align*}
0 &\neq T_2^2T_1 e_n
=T_2^2 e_{3n+1}
=T_2 e_{(3n+1)/2}
=e_{(3n+1)/4}
\end{align*}
and
\begin{equation*}
0 \neq \innpr{Q e_1}{e_{(3n+1)/4}}.
\end{equation*}
But, $(3n+1)/4 < n$ where $n \in \N\backslash\{1\}$. This contradicts the minimality of $n$. Thus $\innpr{Q e_1}{e_n}=0$ for every $n \in \N\backslash\{1\}$.

Hence,
\begin{align*}
Q e_1&=\sum_{n \in \N}\innpr{Q e_1}{e_n}e_n
=\innpr{Q e_1}{e_1}e_1=\norm{Q e_1}^2e_1.
\end{align*}
Since $Q e_1 \neq 0$, it holds that $\norm{Q e_1}=1$ and $Q e_1=e_1$.
\end{proof}

Now, we can prove the next theorem.
\begin{theorem}[Necessary condition] \label{thm:Necessary condition 1}
If the Collatz conjecture holds, then $C^{*}(T_1,T_2)$ has no non-trivial reducing subspaces.
\end{theorem}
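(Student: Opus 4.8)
The plan is to deduce the conclusion directly from Lemma \ref{lem:Projection 1} together with the cyclicity of $e_1$ that the Collatz conjecture supplies through Lemma \ref{lem:Equality 1}. So assume the Collatz conjecture holds and let $\M$ be an arbitrary reducing subspace for $C^{*}(T_1,T_2)$. By Theorem \ref{thm:orthogonal decomposition} there is a well-defined orthogonal projection $Q \in \B(\H)$ with range $\M$. First I would record the standard bridge between reducing subspaces and the commutant: since $\M$ is reducing for each of $T_1$ and $T_2$, the projection $Q$ commutes with $T_1$ and with $T_2$; being self-adjoint, taking adjoints shows $Q$ also commutes with $T_1^{*}$ and $T_2^{*}$. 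Hence $Q$ is a projection lying in $C^{*}(T_1,T_2)'$, and Lemma \ref{lem:Projection 1} applies to give $Qe_1 \in \{0,e_1\}$.

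Next I would pass to a normalization. The orthogonal complement $\M^{\bot}$ is again a reducing subspace for $C^{*}(T_1,T_2)$, and its associated orthogonal projection is $I-Q$. Since $(I-Q)e_1 = e_1$ precisely when $Qe_1 = 0$, after replacing $\M$ by $\M^{\bot}$ if necessary I may assume $Qe_1 = e_1$, i.e.\ $e_1 \in \M$; I will keep track that in the swapped case the conclusion $\M^{\bot}=\H$ means the original $\M$ equals $\{0\}$.

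Now I would invoke the hypothesis. Assuming the Collatz conjecture, $m \sim 1$ for every $m \in \N$, so Lemma \ref{lem:Equality 1} yields $\overline{C^{*}(T_1,T_2)e_1} = \overline{\Span\{e_m \mid m \in \N,\ m \sim 1\}} = \overline{\Span\{e_m \mid m \in \N\}} = \H$. Because $\M$ is a closed subspace invariant under $C^{*}(T_1,T_2)$ that contains $e_1$, it contains $\overline{C^{*}(T_1,T_2)e_1}$, hence $\M = \H$ (and in the swapped case $\M^{\bot}=\H$, i.e.\ $\M=\{0\}$). In either case $\M$ is trivial, so $C^{*}(T_1,T_2)$ has no non-trivial reducing subspaces, which is what Theorem \ref{thm:Irreducibility} calls for.

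I do not expect a genuine obstacle here: all the arithmetic content has already been used in proving Lemma \ref{lem:Projection 1} (in particular the descent $(3n+1)/4 < n$). The only points that need a line of care are the identification of the projection onto a reducing subspace with an element of the commutant, and the fact that cyclicity of $e_1$ here comes from Lemma \ref{lem:Equality 1} rather than from any assumption that $C^{*}(T_1,T_2)$ is unital. One could alternatively phrase the argument through Theorem \ref{thm:Irreducibility}(ii), showing every projection in $C^{*}(T_1,T_2)'$ is $0$ or $I$ and then that the commutant is the norm-closed span of its projections, but the direct route above is shorter and stays within the tools of the excerpt.
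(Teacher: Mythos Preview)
Your proof is correct. Both your argument and the paper's hinge on Lemma~\ref{lem:Projection 1} to pin down $Qe_1\in\{0,e_1\}$, but from there they diverge slightly. The paper takes exactly the alternative route you mention at the end: it invokes the von Neumann algebra fact that $C^{*}(T_1,T_2)'$ is the norm-closed span of its projections, then for an arbitrary projection $Q$ in the commutant and each $n\in\N$ it builds an explicit word $(T_{i_1}\cdots T_{i_k})$ with $(T_{i_1}\cdots T_{i_k})^*e_1=e_n$, commutes $Q$ through, and reads off $Qe_n\in\{0,e_n\}$ to conclude $Q\in\{0,I\}$. Your version bypasses the von Neumann input and the explicit word computation by recycling Lemma~\ref{lem:Equality 1} to get cyclicity of $e_1$ directly, then absorbing the closed invariant subspace containing $e_1$. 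What your route buys is economy and fewer outside facts; what the paper's route buys is a self-contained computation that makes the action of $Q$ on every basis vector visible without re-appealing to Lemma~\ref{lem:Equality 1}.
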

\begin{proof}
For every $C^*$-algebra $\A \subseteq \B(\H)$, $\A'$ is a von Neumann algebra, which is a special kind of $C^{*}$-algebras. Then $\A'$ is the norm closed linear span of its projection (see, e.g. \cite[Chapter 2, Proposition 13.3]{Co00GSM}). Thus,
\begin{align*}
&\text{$C^{*}(T_1,T_2)$ has no reducing subspaces}\\
&\iff C^{*}(T_1,T_2)'=\C I\\
&\iff \text{the set of all projections in $C^{*}(T_1,T_2)'$ is $\{0,I\}$}.
\end{align*}

Let us assume that the Collatz conjecture holds. Let $n \in \N$ and $Q \in C^{*}(T_1,T_2)'$ be a projection. Then, there exist $k \in \N$ such that $1=f^k(n)$ and a $k$-tuple $(i_1,i_2,\cdots,i_k) \in \{1,2\}^{*}$ satisfying
\begin{equation*}
(T_{i_1}T_{i_2} \cdots T_{i_k})e_n=e_1.
\end{equation*}
In the same way as in the proof of Lemma \ref{lem:Equality 1}, we have
\begin{equation*}
(T_{i_1}T_{i_2} \cdots T_{i_k})^{*}e_1=e_n.
\end{equation*}
By Lemma \ref{lem:Projection 1}, $Q e_1$ is either $0$ or $e_1$.

When $Q e_1=0$,
\begin{align*}
Q e_n&=Q(T_{i_1}T_{i_2} \cdots T_{i_k})^{*}e_1
=(T_{i_1}T_{i_2} \cdots T_{i_k})^{*}Q e_1=0.
\end{align*}
Thus, $Q e_n=0$ for every $n \in \N$. It follows that $Q=0$.

When $Q e_1=e_1$,
\begin{align*}
Q e_n&=Q(T_{i_1}T_{i_2} \cdots T_{i_k})^{*}e_1
=(T_{i_1}T_{i_2} \cdots T_{i_k})^{*}Q e_1
=(T_{i_1}T_{i_2} \cdots T_{i_k})^{*}e_1=e_n.
\end{align*}
Thus, $Q e_n=e_n$ for every $n \in \N$. It follows that $Q=I$.

Therefore, $Q \in \{0,I\}$ and it implies that $C^{*}(T_1,T_2)'=\C I$. Then, $C^{*}(T_1,T_2)$ has no non-trivial reducing subspaces.
\end{proof}

From Theorem \ref{thm:Necessary condition 1} and Corollary \ref{cor:Sufficient condition 2}, we obtain the following result.
\begin{theorem}[Necessary and sufficient condition]\label{thm:TwoNS}
The following are equivalent:
\begin{enumerate}[(i)]
\item $C^{*}(T_1,T_2)$ has no non-trivial reducing subspaces;
\item The Collatz conjecture holds.
\end{enumerate}
\end{theorem}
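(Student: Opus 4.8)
The plan is to observe that Theorem \ref{thm:TwoNS} is nothing more than the conjunction of two results already in hand, so no fresh argument is needed. For the implication (i) $\implies$ (ii), I would invoke Corollary \ref{cor:Sufficient condition 2}: if $C^{*}(T_1,T_2)$ has no non-trivial reducing subspaces, then by Theorem \ref{thm:Irreducibility} every nonzero vector of $\H$—in particular each basis vector $e_n$—is cyclic for $C^{*}(T_1,T_2)$, and Theorem \ref{thm:Equivalent 1} then delivers the Collatz conjecture. For the converse (ii) $\implies$ (i), I would cite Theorem \ref{thm:Necessary condition 1} verbatim. The write-up is therefore a two-line argument; the only thing to be careful about is the logical bookkeeping, namely making explicit (via Theorem \ref{thm:Irreducibility}) that ``no non-trivial reducing subspaces'' is the same as ``every $e_n$ is cyclic,'' so that Theorem \ref{thm:Equivalent 1} applies without modification.

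Since the substance lives in the two components, the ``main obstacle'' discussion really concerns them rather than this packaging statement. For (i) $\implies$ (ii), the crux is the reverse inclusion in Lemma \ref{lem:Equality 1}, i.e. $\overline{\Span\{e_m \mid m \in \N,\ m \sim n\}} \subseteq \overline{C^{*}(T_1,T_2)e_n}$; this exploits that $T_1$ and $T_2$ are partial isometries, so that a word $T_{i_1}\cdots T_{i_k}$ carrying $e_m$ to $e_{f^k(m)}$ has adjoint carrying $e_{f^k(m)}$ back to exactly $e_m$, whence one reaches $e_m$ from $e_n$ by ascending the orbit of $n$ to the common value $f^k(m) = f^l(n)$ and descending to $m$. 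For (ii) $\implies$ (i), the heart is Lemma \ref{lem:Projection 1}: a projection $Q$ commuting with $T_1,T_2$ must fix $\C e_1$, proved by a descent/minimality argument using $T_2^2 T_1 e_1 = e_1$ together with its adjoint $e_1 = T_1^{*}(T_2^{*})^2 e_1$; the Collatz hypothesis then propagates $Q e_1 \in \{0,e_1\}$ to $Q e_n \in \{0,e_n\}$ for all $n$, forcing $Q \in \{0,I\}$ and hence $C^{*}(T_1,T_2)' = \C I$.

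Given that both halves are already established, I expect no obstacle whatsoever in assembling Theorem \ref{thm:TwoNS}; the proof is simply ``combine Corollary \ref{cor:Sufficient condition 2} and Theorem \ref{thm:Necessary condition 1}.'' If anything, I would use the occasion to remark that, unlike the single-operator case (Corollary \ref{cor:SingleS}), the two-operator formulation is sharp: irreducibility of $C^{*}(T_1,T_2)$ is not merely sufficient but also necessary, which is precisely what the combination of the two cited results asserts.
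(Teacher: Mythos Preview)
Your proposal is correct and matches the paper's own proof exactly: the paper simply states that Theorem~\ref{thm:TwoNS} follows from Theorem~\ref{thm:Necessary condition 1} and Corollary~\ref{cor:Sufficient condition 2}, with no further argument. Your additional commentary on where the real work lies (Lemma~\ref{lem:Equality 1} and Lemma~\ref{lem:Projection 1}) is accurate but, as you note, belongs to those earlier results rather than to this packaging theorem.
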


\subsection{Formulation by the Cuntz algebra} \label{subsection:FC}

In this subsection, we discuss formulation of the Collatz conjecture by the Cuntz algebras. 

While we defined C*-algebras as subalgebras of $\B(\H)$ for some Hilbert space H in Section 2, they can be characterized abstractly without using Hilbert spaces (see e.g., \cite{Co00GSM}, \cite{Co96GTM} for more details). The notions of projections, isometries, and partial isometries can be defined in the same way. The Cuntz algebras, which were introduced by Cuntz in \cite{Cu77CMP}, are typical examples of such abstract C*-algebras. They are defined as universal C*-algebras given by generators and relations as follows. 

\begin{definition}[The Cuntz algebras]
For $n \in \N_{>1}$, the Cuntz algebra $\O_n$ is the universal $C^{*}$-algebra generated by isometries $S_1,S_2, \cdots, S_n$ satisfying
\begin{enumerate}[(i)]
\item $\sum_{i=1}^n S_{i}S_{i}^{*}=I$,
\item $S_{i}^{*}S_{j}=\delta_{i,j}I, \text{ where } 1\leq i,j \leq n$.
\end{enumerate}
\end{definition}
Cuntz showed that $\O_n$ is simple as a $C^{*}$-algebra in \cite{Cu77CMP}, and so, for any isometries $S_1,S_2,\cdots,S_n \in \B(\H)$ satisfying the relation  above, it holds that $C^{*}(S_1,S_2,\cdots,S_n)$ is isomorphic to $\O_n$.

In this subsection, we formulate the Collatz conjecture as a problem of operator theory by using a $C^{*}$-algebra which is isomorphic to the Cuntz algebra $\O_2$.
Now we define two disjoint subsets of $\N$ by:
\begin{align*}
N_1&=\{n \in \N \mid n \equiv 1,5 \pmod{6}\},\\
N_2&=f(N_1)=\{n \in \N \mid n \equiv 4,16 \pmod{18}\}.
\end{align*}
In what follows, we will let $P$ denote the first-return map for $f$ on $N_1 \cup N_2$.

Since $N_2=f(N_1)$, $P(n)$ is defined for every $n \in N_1$, and $P(n)=3n+1$.
By definition of $N_2$, $3$ is not a prime factor of $n \in N_2$, and so there exists $m \in \N$ such that $f^{m}(n)=n/2^{m} \in N_1$. It follows that $P(n)$ is defined for every $n \in N_2$.
Furthermore, the following propositions hold.
\begin{proposition} \label{prop:First-return}
$P|_{N_1}, P|_{N_2}$ are injections and $P(N_1)=N_2, P(N_2)=N_1 \cup N_2$.
\end{proposition}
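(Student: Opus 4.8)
The plan is to treat the two restrictions of $P$ separately. For $P|_{N_1}$ everything is immediate: every $n \in N_1$ is odd, so, since $N_2 = f(N_1)$, we have $f(n) = 3n+1 \in N_2 \subseteq N_1 \cup N_2$; hence $\tau(n) = 1$ and $P(n) = 3n+1$, a map that is visibly injective with image $f(N_1) = N_2$. The content lies in $P|_{N_2}$.

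For $P|_{N_2}$ I would first translate the defining congruences. Writing $v_2(n)$ for the exponent of $2$ in $n$, the powers of $2$ run through all residues coprime to $3$ modulo $9$ with period $6$, and $4 \equiv 2^2$, $16 \equiv 7 \equiv 2^4 \pmod 9$, so $N_1 = \{n : \gcd(n,6) = 1\}$ and $N_2 = \{n : 2 \mid n,\ n \equiv 4 \text{ or } 7 \pmod 9\}$. Fix $n \in N_2$ and write $n = 2^v m$ with $m$ odd and $v = v_2(n) \ge 1$; then $3 \nmid m$, so $m \in N_1$, and the iterates $f^j(n) = 2^{v-j} m$ ($1 \le j \le v$) are successive halvings ending at $f^v(n) = m \in N_1$. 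Thus $\tau(n)$ is the least $j \ge 1$ with $2^{v-j} m \in N_1 \cup N_2$, and $P(n) = n / 2^{\tau(n)}$. Since $2^{v-j} m \in N_1$ exactly when $j = v$, and $2^{v-j} m \in N_2$ exactly when $1 \le j \le v-1$ and $(v-j) + c \equiv 2$ or $4 \pmod 6$ (where $m \equiv 2^c \pmod 9$), combining with the condition $v + c \equiv 2$ or $4 \pmod 6$ that places $n$ in $N_2$ gives
\[
\tau(n) =
\begin{cases}
\min\{v_2(n), 4\}, & n \equiv 4 \pmod 9,\\
\min\{v_2(n), 2\}, & n \equiv 7 \pmod 9.
\end{cases}
\]

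Injectivity of $P|_{N_2}$ and the identity $P(N_2) = N_1 \cup N_2$ then follow at once by showing that every $w \in N_1 \cup N_2$ has exactly one $P|_{N_2}$-preimage. Since $P(n) = n/2^{\tau(n)}$, a preimage $n$ of $w$ must be of the form $2^s w$ with $s = \tau(2^s w)$; the constraint $2^s w \in N_2$ together with the requirement $\tau(2^s w) = s$ pins down $s$. If $w$ is odd (so $w \in N_1$), then $s \le 4$ (as $v_2(2^s w) = s$), and a short case-check on $w \bmod 9$ leaves exactly one admissible $s \in \{1,2,3,4\}$; if $w$ is even (so $w \in N_2$), then $s = 4$ for $w \equiv 7 \pmod 9$ and $s = 2$ for $w \equiv 4 \pmod 9$. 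In every case the resulting $n$ does lie in $N_2$ and satisfies $P(n) = w$, so $P|_{N_2} \colon N_2 \to N_1 \cup N_2$ is a bijection, giving both the injectivity and $P(N_2) = N_1 \cup N_2$.

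I expect the main obstacle to be the determination of $\tau(n)$: one has to be sure that none of the earlier halvings $f^j(n)$ with $1 \le j < \tau(n)$ has already re-entered $N_1 \cup N_2$, which is exactly where the residue conditions modulo $6$ are needed. After $\tau$ is established, the preimage count is a finite, mechanical computation of residues modulo $9$.
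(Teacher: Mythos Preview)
Your argument is correct, but it takes a different route from the paper. The paper handles injectivity and surjectivity of $P|_{N_2}$ separately and never computes $\tau$. For injectivity it observes that if $P(m)=P(n)$ with $m>n$ in $N_2$, then both $m$ and $n$ are powers of $2$ times the common value $P(m)$, so $n\mid m$; hence $n$ appears in the halving orbit of $m$ before $P(m)$ is reached, forcing $P(m)\ge n>P(n)$, a contradiction. For surjectivity it checks, residue class by residue class modulo $18$, that every $n\in N_1\cup N_2$ has some $2^{k}n\in N_2$, and then the minimal such $k$ gives a preimage. Your approach instead derives the closed formula $\tau(n)=\min\{v_2(n),4\}$ or $\min\{v_2(n),2\}$ according to $n\bmod 9$, and then proves bijectivity in one stroke by exhibiting the unique preimage of each $w\in N_1\cup N_2$. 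The paper's injectivity argument is slicker and requires no case analysis, while your method yields strictly more information (the explicit first-return time), at the cost of a longer residue computation. Both are valid; the trade-off is elegance versus explicitness.
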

\begin{proof}
We will show that $P|_{N_2}$ is an injection and $P(N_2)=N_1 \cup N_2$.

For every $n \in N_2$, $n/P(n)$ is a power of $2$. 
Assume $P(m)=P(n)$ for $m,n\in N_2$ with $m>n$. 
As $m,n$ are products of $P(m)=P(n)$ and powers of $2$, $m$ is a multiple of $n$. 
It follows that $P(m)$ is not less than $n\in N_2$. 
So $P(n)=P(m)\geq n$, which is a contradiction. 
Therefore, $P|_{N_2}$ is a injection.

To prove the surjectivity of $P|_{N_2}$, we check the following statement:
\begin{itemize}
\item For every $n \in N_1 \cup N_2$, there exists a positive integer $k$ such that $2^{k} \cdot n \in N_2$.
\end{itemize}
Let $k$ be the minimum integer as such. Then, it holds that $P(2^{k} \cdot n)=n$ and $P|_{N_2}$ is surjective.

Notice that $N_1=\{n \in \N \mid n \equiv 1,5 \pmod{6}\}=\{n \in \N \mid n \equiv 1,5,7,11,13,17 \pmod{18}\}$. We prove the statement by division into cases:
\begin{itemize}
\item If $n \equiv 1,4,13 \pmod{18}$, then $4n \equiv 4,16 \pmod{18} \in N_2$.
\item If $n \equiv 5 \pmod{18}$, then $8n \equiv 4 \pmod{18} \in N_2$.
\item If $n \equiv 7,16 \pmod{18}$, then $16n \equiv 16,4 \pmod{18} \in N_2$.
\item If $n \equiv 11,17 \pmod{18}$, then $2n \equiv 4,16 \pmod{18} \in N_2$.
\end{itemize}
From the above, we can obtain the conclusion.
\end{proof}

\begin{proposition}[Transformation of the Collatz conjecture] \label{prop:Transformation}
The Collatz conjecture holds if and only if $1 \in \orb(n;P)$ for every $n \in N_1 \cup N_2$.
\end{proposition}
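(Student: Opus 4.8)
The plan is to prove both implications by passing between the $f$-orbit of a point and its $P$-orbit. The observation I would record first is that, for $n \in N_1 \cup N_2$, the sequence $(P^k(n))_{k \ge 0}$ is precisely the subsequence of $(f^t(n))_{t \ge 0}$ consisting of those terms lying in $N_1 \cup N_2$: if $0 = t_0 < t_1 < t_2 < \cdots$ enumerates $\{t \in \{0\}\cup\N \mid f^t(n) \in N_1 \cup N_2\}$, then $P^k(n) = f^{t_k}(n)$ for every $k$. This follows by unwinding Definition \ref{def:First-return map} together with an induction on $k$, using that $\tau \ge 1$ always; Proposition \ref{prop:First-return} guarantees $P(N_1 \cup N_2) \subseteq N_1 \cup N_2$, so every $P^k(n)$ is defined when $n \in N_1 \cup N_2$ and the list $(t_k)$ is infinite. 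In particular $\orb(n;P) \subseteq \orb(n;f)$.

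For the forward implication I would assume the Collatz conjecture and fix $n \in N_1 \cup N_2$. Choose $M \in \{0\}\cup\N$ with $f^M(n) = 1$. Since $1 \equiv 1 \pmod{6}$ we have $1 \in N_1 \subseteq N_1 \cup N_2$, so $M$ lies in the set enumerated by $(t_k)$, say $M = t_k$; then $P^k(n) = f^M(n) = 1$, hence $1 \in \orb(n;P)$.

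For the converse I would first prove the \emph{key lemma}: every $m \in \N$ satisfies $f^j(m) \in N_1$ for some $j \in \{0\}\cup\N$. Indeed, iterating $f$ to strip the factors of $2$ from $m$ gives an odd number $q = f^a(m)$; if $3 \nmid q$ then $q$ is coprime to $6$, hence $q \equiv 1,5 \pmod{6}$ and $q \in N_1$, while if $3 \mid q$ then $f(q) = 3q+1$ is even with $3q+1 \equiv 1 \pmod{3}$, so stripping its factors of $2$ yields an odd number coprime to $3$, again an element of $N_1$. Granting the lemma, assume $1 \in \orb(n;P)$ for all $n \in N_1 \cup N_2$, fix $m \in \N$, and pick $j$ with $n := f^j(m) \in N_1$. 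Then $1 \in \orb(n;P) \subseteq \orb(n;f) \subseteq \orb(m;f)$, the last inclusion because $f^i(n) = f^{i+j}(m)$; hence $1 \in \orb(m;f)$ and the Collatz conjecture holds.

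The routine parts are the two residue computations and the orbit inclusions. The one place demanding care is the bookkeeping in the first step: one must check from the definition of the first-return map that $(P^k(n))_k$ visits every element of the $f$-orbit that lies in $N_1 \cup N_2$, in order and with no omission or repetition, so that a hit of $1$ by the $f$-orbit is genuinely a hit of $1$ by the $P$-orbit. Once that correspondence is pinned down, both directions are immediate.
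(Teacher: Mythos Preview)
Your proof is correct. The converse direction matches the paper's argument (the paper asserts without detail that every $f$-orbit meets $N_1$; you supply the residue check). The forward direction, however, takes a genuinely different route. The paper argues indirectly: assuming Collatz, $\orb(n;f)$ is finite, hence $\orb(n;P)$ is finite, so pigeonhole gives $P^l(n)=P^m(n)$ with $l<m$; iterating the resulting $P$-periodicity far enough forces the periodic value $k$ to land in the tail $\{1,2,4\}$ of the $f$-orbit, and since $2\notin N_1\cup N_2$ and $P(4)=1$ one concludes $1\in\orb(n;P)$. You instead prove the sharper structural fact that $(P^k(n))_k$ is exactly the subsequence of $(f^t(n))_t$ hitting $N_1\cup N_2$, so that $f^M(n)=1\in N_1$ immediately yields $1\in\orb(n;P)$. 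Your approach is shorter and more transparent, and the correspondence you isolate is reusable elsewhere; the paper's periodicity argument avoids that bookkeeping but at the cost of a more circuitous path through $\{1,2,4\}$.
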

\begin{proof}
Assume that the Collatz conjecture holds. For every $n \in N_1 \cup N_2$,  $1 \in \orb(n;f)$ and $\orb(n;f)$ is a finite set.
Since $\orb(n;P) \subseteq \orb(n;f)$, it follows that $\orb(n;P)$ is finite. Then, there are integers $l<m \in \N$ such that
\begin{equation*}
P^l(n)=P^m(n).
\end{equation*}
Let $k=P^l(n)$. Since $l<m$, it holds that $k=P^{m-l}(k)$. For sufficiently large $N \in \N$, $f^{N}(n) \in \{1,2,4\}$. Thus,
\begin{align*}
k&=P^{m-l}(k)
=P^{2(m-l)}(k)
\cdots
=P^{N(m-l)}(k) \in \{1,2,4\}.
\end{align*}
Because $k$ is in $N_1 \cup N_2$, $k \neq 2$. Since $P(4)=1$, $1 \in \orb(n;P)$.

Let us prove the converse.
For every $n \in \N$, there is a positive integer $m \in \N$ such that $f^m(n) \in N_1$. By assumption, $1 \in \orb(f^m(n);P)$ and  $\orb(f^m(n);P) \subseteq \orb(n;f)$ implies that the Collatz conjecture holds.
\end{proof}

Consider the Hilbert space $\H$ whose C.O.N.S. is $\{e_n\}_{n \in N_1 \cup N_2}$. We define $T_1,T_2 \in \B(\H)$ by:
\begin{align*}
T_1e_n&=
\begin{cases}
e_{P(n)}, & n \in N_1,\\
0, & n \in N_2,
\end{cases}
\\
T_2e_n&=
\begin{cases}
0, & n \in N_1,\\
e_{P(n)}, & n \in N_2.
\end{cases}
\end{align*}
Then we define $S_1,S_2 \in \B(\H)$ as $S_1=T_1^{*}T_2^{*}, S_2=T_2^{*}$.

\begin{proposition}
If $T_1,T_2,S_1$, and $S_2$ are defined as above, then $C^{*}(S_1,S_2)=C^{*}(T_1,T_2)$ and $C^{*}(S_1,S_2)$ is isomorphic to $\O_2$.
\end{proposition}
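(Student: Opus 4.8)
The plan is to first determine precisely how $T_1$ and $T_2$ act on the basis, then read off that $S_1,S_2$ are isometries satisfying the defining relations of $\O_2$, and finally compare the two generated $C^{*}$-algebras by expressing each pair of generators in terms of the other.

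\textbf{Step 1: the structure of $T_1$ and $T_2$.} I would start from Proposition \ref{prop:First-return}. Since $P|_{N_1}$ is injective with $P(N_1)=N_2$, the assignment $e_n \mapsto e_{P(n)}$ takes $\{e_n\}_{n \in N_1}$ bijectively onto $\{e_n\}_{n \in N_2}$, so $T_1$ is a partial isometry with initial space $\overline{\Span\{e_n \mid n \in N_1\}}$ and final space $\overline{\Span\{e_n \mid n \in N_2\}}$; explicitly $T_1^{*}e_m = e_{(P|_{N_1})^{-1}(m)}$ for $m \in N_2$ and $T_1^{*}e_m = 0$ for $m \in N_1$. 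Since $P|_{N_2}$ is injective with $P(N_2)=N_1 \cup N_2$, the assignment $e_n \mapsto e_{P(n)}$ takes $\{e_n\}_{n \in N_2}$ bijectively onto the whole C.O.N.S., so $T_2$ is a coisometry: $T_2 T_2^{*}=I$ and $T_2^{*}e_m = e_{(P|_{N_2})^{-1}(m)}$ for every $m \in N_1 \cup N_2$. Consequently $S_2 = T_2^{*}$ maps the C.O.N.S.\ bijectively onto $\{e_k \mid k \in N_2\}$, and $S_1 = T_1^{*}T_2^{*}$ maps it bijectively onto $\{e_j \mid j \in N_1\}$ via the composite bijection $(P|_{N_1})^{-1}\circ (P|_{N_2})^{-1}$ of $N_1 \cup N_2$ onto $N_1$. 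Hence $S_1$ and $S_2$ are isometries, $S_1 S_1^{*}$ is the orthogonal projection onto $\overline{\Span\{e_n \mid n \in N_1\}}$, and $S_2 S_2^{*}$ is the orthogonal projection onto $\overline{\Span\{e_n \mid n \in N_2\}}$.

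\textbf{Step 2: the Cuntz relations.} Because $S_1,S_2$ are isometries, $S_1^{*}S_1 = S_2^{*}S_2 = I$. Because their ranges $\overline{\Span\{e_n \mid n \in N_1\}}$ and $\overline{\Span\{e_n \mid n \in N_2\}}$ are orthogonal, $S_1^{*}S_2 = S_2^{*}S_1 = 0$. Because $N_1$ and $N_2$ partition $N_1 \cup N_2$, the two range projections sum to the identity, i.e.\ $S_1 S_1^{*} + S_2 S_2^{*} = I$. Thus $S_1,S_2$ satisfy conditions (i) and (ii) in the definition of $\O_2$, and by the simplicity of $\O_2$ recalled above, $C^{*}(S_1,S_2)$ is isomorphic to $\O_2$.

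\textbf{Step 3: $C^{*}(S_1,S_2) = C^{*}(T_1,T_2)$.} The inclusion $C^{*}(S_1,S_2)\subseteq C^{*}(T_1,T_2)$ is immediate since $S_1 = T_1^{*}T_2^{*}$ and $S_2 = T_2^{*}$ lie in $C^{*}(T_1,T_2)$. For the reverse inclusion it suffices to write $T_1,T_2$ as words in $S_1,S_2$ and their adjoints: clearly $T_2 = S_2^{*}$, and since $T_2^{*}T_2$ is the projection onto $\overline{\Span\{e_n \mid n \in N_2\}}$, which is exactly the final space of $T_1$, one gets $T_1^{*} = T_1^{*}T_2^{*}T_2 = S_1 S_2^{*}$, hence $T_1 = S_2 S_1^{*}$; a one-line check on the C.O.N.S.\ confirms this. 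Both $T_1$ and $T_2$ therefore belong to $C^{*}(S_1,S_2)$, so $C^{*}(T_1,T_2)\subseteq C^{*}(S_1,S_2)$, and the two algebras coincide.

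\textbf{Main obstacle.} The only delicate point is Step 1 — in particular the claim that $T_2$ is a coisometry and the exact identification of the initial and final subspaces of $T_1$ and $T_2$ — which is precisely where the bijectivity statements of Proposition \ref{prop:First-return} ($P|_{N_1}\colon N_1 \to N_2$ and $P|_{N_2}\colon N_2 \to N_1 \cup N_2$) are needed. Once those are established, Steps 2 and 3 reduce to elementary bookkeeping on basis vectors.
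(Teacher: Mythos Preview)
Your proof is correct and follows essentially the same approach as the paper: both use Proposition \ref{prop:First-return} to identify the initial and final spaces of $T_1,T_2$, deduce that $S_1,S_2$ are isometries with orthogonal range projections summing to $I$, and recover $T_1=S_2S_1^{*}$ from the identity $T_2^{*}T_2T_1=T_1$. The only differences are organizational (you establish the Cuntz relations before the equality of the two $C^{*}$-algebras, and you argue $S_1^{*}S_2=0$ via orthogonality of the ranges rather than the paper's direct computation $S_1^{*}S_2=T_2T_1T_2^{*}=0$), not substantive.
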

\begin{proof}
Since $S_1=T_1^{*}T_2^{*}, S_2=T_2^{*}$, $C^{*}(S_1,S_2) \subseteq C^{*}(T_1,T_2)$.
By Proposition \ref{prop:First-return}, $P|_{N_2}$ is injective and $P(N_1)=N_2$. Therefore, $T_2^{*}T_2T_1=T_1$.
Then, $T_1=T_2^{*}T_2T_1=S_2S_1^{*}, T_2=S_2^{*}$, and it follows that $C^{*}(T_1,T_2) \subseteq C^{*}(S_1,S_2)$.
Then, $C^{*}(S_1,S_2)=C^{*}(T_1,T_2)$.

Since $P(N_1)=N_2, P(N_2)=N_1 \cup N_2$, both $S_1=T_1^{*}T_2^{*}$ and $S_2=T_2^{*}$ are isometries. Furthermore,
\begin{align*}
S_1S_1^{*}e_n&=
\begin{cases}
e_n, & n \in N_1,\\
0, & n \in N_2,
\end{cases}
\\
S_2S_2^{*}e_n&=
\begin{cases}
0, & n \in N_1,\\
e_n, & n \in N_2.
\end{cases}
\end{align*}
Then, it holds that $S_1S_1^{*}+S_2S_2^{*}=I$.

Since $T_1T_2^{*}=0$ and $T_2T_1^{*}=0$, it follows that $S_1^{*}S_2=T_2T_1T_2^{*}=0=T_2T_1^{*}T_2^{*}=S_2^{*}S_1$.
Hence, $C^{*}(S_1,S_2)$ is isomorphic to $\O_2$.
\end{proof}

From now on, we will fix $T_1,T_2,S_1$, and $S_2$ as above and let $\sim$ denote the equivalence relation on $N_1 \cup N_2$ by $P$. This setting presents the following relations.

\begin{lemma} \label{lem:Equality 2}
For every $n \in N_1 \cup N_2$,
\begin{equation*}
\overline{C^{*}(S_1,S_2)e_n}=\overline{\Span\{e_m \mid m \in N_1 \cup N_2, m \sim n\}}.
\end{equation*}
\end{lemma}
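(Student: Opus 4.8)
The plan is to mirror the proof of Lemma~\ref{lem:Equality 1}, since $C^{*}(S_1,S_2)=C^{*}(T_1,T_2)$ and the operators $T_1,T_2$ here play exactly the role of the two operators in the ``two operators'' subsection, only now with the Collatz map $f$ replaced by the first-return map $P$ on $N_1\cup N_2$. So I would first establish the inclusion $\subseteq$ exactly as in Lemma~\ref{lem:Subset}: set $\M=\overline{\Span\{e_m\mid m\in N_1\cup N_2,\ m\sim n\}}$, check that $\M$ is invariant under both $T_1$ and $T_2$ (immediate: $T_ie_l$ is either $0$ or $e_{P(l)}$, and $P(l)\sim l$), and then check that $\M$ is invariant under $T_1^{*}$ and $T_2^{*}$ by the same Fourier-expansion computation as before, using that $T_i^{*}e_l=\sum_{k\in P^{-1}(l)\cap N_i}e_k$ and $k\sim l$ whenever $P(k)=l$. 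Since $S_1,S_2$ are finite products of $T_1,T_2,T_1^{*},T_2^{*}$, the algebra $C^{*}(S_1,S_2)$ preserves $\M$, giving $\overline{C^{*}(S_1,S_2)e_n}\subseteq\M$.

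For the reverse inclusion, I would show $e_m\in\overline{C^{*}(S_1,S_2)e_n}$ for every $m\sim n$. Fix such an $m$; there are non-negative integers $k,l$ with $P^{k}(m)=P^{l}(n)$. Because each step of $P$ is implemented by one of $T_1,T_2$ (acting on the appropriate piece of $N_1\cup N_2$), one can pick tuples $(i_1,\dots,i_k),(j_1,\dots,j_l)\in\{1,2\}^{*}$ with $(T_{i_1}\cdots T_{i_k})e_m=e_{P^{k}(m)}$ and $(T_{j_1}\cdots T_{j_l})e_n=e_{P^{l}(n)}$. Since $T_1,T_2$ are partial isometries, the argument in Lemma~\ref{lem:Equality 1} goes through verbatim: $\langle(T_{i_1}\cdots T_{i_k})^{*}e_{P^{k}(m)},e_m\rangle=1$ forces $(T_{i_1}\cdots T_{i_k})^{*}e_{P^{k}(m)}=e_m$, hence
\begin{equation*}
e_m=(T_{i_1}\cdots T_{i_k})^{*}(T_{j_1}\cdots T_{j_l})e_n\in C^{*}(T_1,T_2)e_n=C^{*}(S_1,S_2)e_n.
\end{equation*}
This proves $\M\subseteq\overline{C^{*}(S_1,S_2)e_n}$ and completes the equality.

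The only genuinely new point relative to Lemma~\ref{lem:Equality 1} — and the step I would treat most carefully — is verifying that each application of $P$ really is realized by a single $T_i$ on the nose, i.e.\ that for $p\in N_1$ one has $T_1e_p=e_{P(p)}$ and for $p\in N_2$ one has $T_2e_p=e_{P(p)}$, with the ``wrong'' operator killing $e_p$. This is exactly the definition of $T_1,T_2$, so the chain of basis vectors $e_m,e_{P(m)},\dots,e_{P^{k}(m)}$ stays inside the C.O.N.S.\ indexed by $N_1\cup N_2$ and never leaves it; Proposition~\ref{prop:First-return} (that $P$ maps $N_1\cup N_2$ into itself, with $P(N_1)=N_2$ and $P(N_2)=N_1\cup N_2$) is what guarantees this closure. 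Once that bookkeeping is in place, there is no real obstacle: the partial-isometry estimate does all the work, just as before. I would also remark that, just as Lemma~\ref{lem:Equality 1} led to Theorem~\ref{thm:Equivalent 1}, this lemma combined with Proposition~\ref{prop:Transformation} will be the engine for the equivalence in Theorem~\ref{thm:CuntzNS}.
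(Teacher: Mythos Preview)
Your proposal is correct and matches the paper's approach exactly: the paper simply notes that $C^{*}(S_1,S_2)=C^{*}(T_1,T_2)$ and says the proof is the same as that of Lemma~\ref{lem:Equality 1}, which is precisely what you spell out in detail. Your additional care about the bookkeeping (that $P$ really is implemented by a single $T_i$ at each step, via Proposition~\ref{prop:First-return}) is a welcome expansion of what the paper leaves implicit.
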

\begin{proof}
It suffices to show that
\begin{align*}
\overline{C^{*}(T_1,T_2)e_n}=\overline{\Span\{e_m \mid m \in N_1 \cup N_2, m \sim n\}}.
\end{align*}
The way to prove is the same as the proof of Lemma \ref{lem:Equality 1}.
\end{proof}

Now, we have the next theorem which induces a sufficient condition for the Collatz conjecture as an operator theoretic statement.
\begin{theorem} \label{thm:Equivalent 2}
The following statements are equivalent:
\begin{enumerate}[(i)]
\item There exists $n \in N_1 \cup N_2$ such that $e_n$ is a cyclic vector for $C^{*}(S_1,S_2)$; \label{CuntzOperators}
\item The Collatz conjecture holds. \label{CuntzCollatz}
\end{enumerate}
\end{theorem}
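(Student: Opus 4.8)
The plan is to mirror the structure of the proof of Theorem \ref{thm:Equivalent 1}, using Lemma \ref{lem:Equality 2} in place of Lemma \ref{lem:Equality 1} and Proposition \ref{prop:Transformation} to translate between orbits of $P$ and orbits of $f$. The statement is an equivalence, so I would prove the two implications separately.

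For $(\ref{CuntzOperators}) \implies (\ref{CuntzCollatz})$, suppose $e_n$ is a cyclic vector for $C^{*}(S_1,S_2)$ for some $n \in N_1 \cup N_2$. By Lemma \ref{lem:Equality 2},
\begin{equation*}
\H = \overline{C^{*}(S_1,S_2)e_n} = \overline{\Span\{e_m \mid m \in N_1 \cup N_2,\ m \sim n\}},
\end{equation*}
where $\sim$ is the orbit equivalence relation by $P$. Since $\{e_m\}_{m \in N_1 \cup N_2}$ is a C.O.N.S. for $\H$, this forces $m \sim n$ for every $m \in N_1 \cup N_2$; in particular $1 \sim m$ for every such $m$, so $\orb(1;P) \cap \orb(m;P) \neq \emptyset$. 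Because $P(4)=1$ and $P(1) = 4$ (noting $1 \in N_1$, $4 \in N_2$), $1$ is a periodic point of $P$, so $\orb(1;P)$ is contained in every orbit it meets; hence $1 \in \orb(m;P)$ for every $m \in N_1 \cup N_2$. By Proposition \ref{prop:Transformation}, the Collatz conjecture holds.

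For $(\ref{CuntzCollatz}) \implies (\ref{CuntzOperators})$, assume the Collatz conjecture holds. By Proposition \ref{prop:Transformation}, $1 \in \orb(m;P)$ for every $m \in N_1 \cup N_2$, hence $m \sim 1$ for all such $m$. Applying Lemma \ref{lem:Equality 2} with $n=1$ gives
\begin{equation*}
\overline{C^{*}(S_1,S_2)e_1} = \overline{\Span\{e_m \mid m \in N_1 \cup N_2,\ m \sim 1\}} = \overline{\Span\{e_m \mid m \in N_1 \cup N_2\}} = \H,
\end{equation*}
so $e_1$ is a cyclic vector for $C^{*}(S_1,S_2)$, establishing $(\ref{CuntzOperators})$.

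I do not expect a serious obstacle: the analytic content has already been isolated in Lemma \ref{lem:Equality 2} and the number-theoretic content in Proposition \ref{prop:Transformation}. The only point requiring a little care is the appeal to periodicity of $1$: one must check that $1$ genuinely lies on a $P$-cycle, i.e. that $P^k(1) = 1$ for some $k \ge 1$, which follows from $1 \in N_1 \Rightarrow P(1) = 3 \cdot 1 + 1 = 4 \in N_2 \Rightarrow P(4) = 1$ (the latter since $4 = 2^2 \cdot 1$ with $1 \in N_1$ and no smaller power works), so $P^2(1) = 1$. With that in hand the argument is a direct transcription of the proof of Theorem \ref{thm:Equivalent 1}.
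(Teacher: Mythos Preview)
Your proposal is correct and follows essentially the same approach as the paper: both directions are reduced to Lemma \ref{lem:Equality 2} and Proposition \ref{prop:Transformation}, exactly as in the proof of Theorem \ref{thm:Equivalent 1}. The paper handles the passage from ``$1 \sim m$ for all $m$'' to ``$1 \in \orb(m;P)$ for all $m$'' by a one-line remark at the start, whereas you spell out the underlying reason (periodicity of $1$ under $P$, via $P(1)=4$ and $P(4)=1$); this is the same argument, just made more explicit.
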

\begin{proof}
Notice that $1 \in \orb(n;P)$ for every $n \in N_1 \cup N_2$ if and only if $1 \sim n$ for every $n \in N_1 \cup N_2$.

$(\ref{CuntzOperators}) \implies (\ref{CuntzCollatz})$.
Assume that there exists $n \in N_1 \cup N_2$ such that $e_n$ is a cyclic vector for $C^{*}(S_1,S_2)$.
By Lemma \ref{lem:Equality 2}, it follows that
\begin{align*}
\H=\overline{C^{*}(S_1,S_2)e_n}
=\overline{\Span\{e_m \mid m \in N_1 \cup N_2, m \sim n\}}.
\end{align*}
Hence, $m \sim n$ for every $m \in N_1 \cup N_2$ which implies that $1 \sim m$ for every $m \in N_1 \cup N_2$. Then, the Collatz conjecture holds by Proposition \ref{prop:Transformation}.

$(\ref{CuntzCollatz}) \implies (\ref{CuntzOperators})$.
Assume that the Collatz conjecture holds. Then $1 \sim m$ for every $m \in \N$.
By Proposition \ref{prop:Transformation} and Lemma \ref{lem:Equality 2}, it follows that
\begin{align*}
\overline{C^{*}(S_1,S_2)e_1}&=\overline{\Span\{e_m \mid m \in N_1 \cup N_2, m \sim 1\}}\\
&=\overline{\Span\{e_m \mid m \in N_1 \cup N_2\}}
=\H.
\end{align*}
Thus, $e_1$ is a cyclic vector for $C^{*}(S_1,S_2)$.
\end{proof}

\begin{corollary}[Sufficient condition] \label{cor:Sufficient condition 3}
If $C^{*}(S_1,S_2)$ has no non-trivial reducing subspaces, then the Collatz conjecture holds.
\end{corollary}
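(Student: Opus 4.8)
The plan is to derive this as an immediate consequence of the irreducibility dictionary (Theorem \ref{thm:Irreducibility}) together with the equivalence just established in Theorem \ref{thm:Equivalent 2}. The point is that ``no non-trivial reducing subspaces'' is one of three equivalent formulations of irreducibility for a $C^{*}$-algebra, and the third of those formulations is precisely the statement that \emph{every} non-zero vector is cyclic; feeding a single well-chosen basis vector into Theorem \ref{thm:Equivalent 2} then closes the loop.

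Concretely, I would first assume that $C^{*}(S_1,S_2)$ has no non-trivial reducing subspaces. Since $C^{*}(S_1,S_2)$ is a $C^{*}$-algebra in $\B(\H)$, Theorem \ref{thm:Irreducibility} applies and yields, from statement (i) to statement (iii), that every non-zero vector $x \in \H$ satisfies $\H = \overline{C^{*}(S_1,S_2)x}$, i.e.\ is a cyclic vector for $C^{*}(S_1,S_2)$. Next I would simply specialize $x$ to a basis vector: the Hilbert space $\H$ in this subsection is the one with C.O.N.S.\ $\{e_n\}_{n \in N_1 \cup N_2}$, so fix any $n \in N_1 \cup N_2$ (for definiteness one may take $n=1$, noting $1 \in N_1$), and observe that $e_n$ is a non-zero vector, hence cyclic for $C^{*}(S_1,S_2)$. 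This establishes condition (\ref{CuntzOperators}) of Theorem \ref{thm:Equivalent 2}.

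Finally, applying the implication $(\ref{CuntzOperators}) \implies (\ref{CuntzCollatz})$ of Theorem \ref{thm:Equivalent 2} gives that the Collatz conjecture holds, which is the desired conclusion. I do not expect any genuine obstacle here: the corollary is a formal consequence of two results already in hand, exactly parallel to Corollary \ref{cor:SingleS} and Corollary \ref{cor:Sufficient condition 2}. The only point worth stating carefully is that Theorem \ref{thm:Irreducibility} is being invoked for the algebra $C^{*}(S_1,S_2)$ acting on this particular $\H$, so that the cyclic vectors produced really are vectors of this Hilbert space and $e_n$ is among them.
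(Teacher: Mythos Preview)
Your proposal is correct and follows essentially the same approach as the paper: the paper's proof simply notes that absence of non-trivial reducing subspaces makes every $e_n$ cyclic (implicitly via Theorem \ref{thm:Irreducibility}) and then invokes Theorem \ref{thm:Equivalent 2}. Your version is slightly more explicit in naming Theorem \ref{thm:Irreducibility} and in picking a specific basis vector, but the argument is the same.
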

\begin{proof}
If $C^{*}(S_1,S_2)$ has no non-trivial reducing subspaces, then $e_n$ is a cyclic vector for every $n \in N_1 \cup N_2$. Then the Collatz conjecture holds by Theorem \ref{thm:Equivalent 2}.
\end{proof}

In the rest of this subsection, we will prove the necessity of the statement that $C^{*}(S_1,S_2)$ has no non-trivial reducing subspaces for the Collatz conjecture.
It will arise from the following lemma.

\begin{lemma} \label{lem:Projection 2}
If $Q \in C^{*}(S_1,S_2)'$ is a projection and the Collatz conjecture holds, then $Q e_1 \in \{0,e_1\}$.
\end{lemma}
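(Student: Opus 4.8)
The plan is to mimic the proof of Lemma \ref{lem:Projection 1}, but now working with the generators $S_1, S_2$ of the Cuntz algebra rather than directly with $T_1, T_2$. The key point there was the existence of a short ``word'' in the operators that fixes $e_1$: in the two-operator setting we used $T_2^2 T_1 e_1 = e_1$. Here the analogous fact is that $1 \in N_1$ and, since $P(1) = 4 \in N_2$ and $P(4) = 1$, we have $P^2(1) = 1$, i.e. $1$ is a periodic point of $P$ of period $2$. Translating this into the operators: $T_1 e_1 = e_4$ and $T_2 e_4 = e_1$, so $T_2 T_1 e_1 = e_1$, and since $S_2 = T_2^{*}$, $S_1 = T_1^{*} T_2^{*}$ we get $T_1 = S_2 S_1^{*}$ and $T_2 = S_2^{*}$; hence $e_1 = T_2 T_1 e_1 = S_2^{*} S_2 S_1^{*} e_1 = S_1^{*} e_1$ (using $S_2^{*} S_2 = I$), and dually $e_1 = S_1 e_1$. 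The first step is to record these identities $S_1 e_1 = e_1 = S_1^{*} e_1$ cleanly.

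Next I would set up the minimality argument exactly as in Lemma \ref{lem:Projection 1}. Let $Q \in C^{*}(S_1,S_2)'$ be a projection with $Q e_1 \neq 0$, and suppose for contradiction that there is some $n \in (N_1 \cup N_2) \setminus \{1\}$ with $\innpr{Q e_1}{e_n} \neq 0$; take $n$ minimal as such. Using $Q$ commutes with $S_1$ and $S_1^{*}$ together with $S_1 e_1 = e_1$,
\begin{align*}
0 \neq \innpr{Q e_1}{e_n} = \innpr{Q S_1^{*} e_1}{e_n} = \innpr{S_1^{*} Q e_1}{e_n} = \innpr{Q e_1}{S_1 e_n}.
\end{align*}
Now $S_1 = T_1^{*} T_2^{*}$, so $S_1 e_n = T_1^{*} T_2^{*} e_n$ is either $0$ or a single basis vector $e_{m}$ where $m$ is the ``$P^{-1}$-predecessor'' of $n$ in the appropriate sense; the computation should show that the index $m$ so produced is strictly smaller than $n$ (this is where the specific arithmetic of $N_1, N_2$ and the fact that $P$ strictly increases values on $N_1$ and, by passing through $S_1^{*}$, one moves backwards along the $P$-orbit towards smaller integers, has to be checked). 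Obtaining $\innpr{Q e_1}{S_1 e_n} \neq 0$ with $S_1 e_n$ a basis vector of index $< n$ contradicts minimality. Therefore $\innpr{Q e_1}{e_n} = 0$ for all $n \neq 1$, whence $Q e_1 = \innpr{Q e_1}{e_1} e_1 = \norm{Q e_1}^2 e_1$, and since $Q$ is a nonzero projection applied to $e_1$ this forces $\norm{Q e_1} = 1$ and $Q e_1 = e_1$.

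The main obstacle I anticipate is verifying that applying $S_1 = T_1^{*}T_2^{*}$ (or possibly an appropriate composite involving $S_2^{*}$ as well) to $e_n$ really does decrease the index, for \emph{every} $n \in (N_1 \cup N_2)\setminus\{1\}$; in Lemma \ref{lem:Projection 1} this was the clean inequality $(3n+1)/4 < n$, and here the descent is governed by the first-return map $P$ and the residue classes modulo $18$ that define $N_1$ and $N_2$. One has to confirm that the relevant operator word always lands on a nonzero basis vector when $\innpr{Q e_1}{e_n} \neq 0$ and that the resulting index is a genuine predecessor under $P$, hence — because the hypothesis is that the Collatz conjecture holds, so every $P$-orbit reaches $1$ and in particular $P$-orbits are eventually periodic — strictly closer to $1$. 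Once the descent inequality is in hand the rest is formal, paralleling the earlier lemma verbatim; so the write-up should isolate that arithmetic claim as the crux and dispatch the surrounding structure by reference to the proof of Lemma \ref{lem:Projection 1}.
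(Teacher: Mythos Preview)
Your setup is right—the identity $S_1 e_1 = e_1 = S_1^{*} e_1$ is exactly the anchor the paper uses—but the minimality argument you propose does \emph{not} go through, and the obstacle you anticipate is fatal rather than a technicality. The operator $S_1 = T_1^{*}T_2^{*}$ does not decrease the index in general: for instance $T_2^{*}e_5 = e_{40}$ (since $40\in N_2$ and $P(40)=5$) and then $T_1^{*}e_{40} = e_{13}$, so $S_1 e_5 = e_{13}$ with $13>5$. There is no single word in $S_1,S_2$ that plays the role of $T_2^2T_1$ from Lemma~\ref{lem:Projection 1} and uniformly drives the index down, because the exponent $k$ in $2^{k}n\in N_2$ depends on the residue of $n$ modulo $18$ and is usually $\ge 2$. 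So the ``minimal $n$'' strategy, copied from Lemma~\ref{lem:Projection 1}, breaks here.

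The paper abandons minimality entirely and instead iterates \emph{forward} with $S_1^{*}=T_2T_1$. From $e_1=T_1^{*}T_2^{*}e_1$ one gets, for every $k$,
\[
0\neq\innpr{Qe_1}{e_n}=\innpr{Qe_1}{(T_2T_1)^{k}e_n},
\]
which forces $(T_2T_1)^{k}e_n\neq 0$, hence $(T_2T_1)^{k}e_n=e_{P^{2k}(n)}$ with $P^{2k}(n)\in N_1$ for all $k$. Now the Collatz hypothesis (via Proposition~\ref{prop:Transformation}) gives $P^{2k}(n)\in\{1,4\}$ for large $k$, and since $4\notin N_1$ we get $P^{2k}(n)=1$, i.e.\ $(T_2T_1)^{k}e_n=e_1$. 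Then $\innpr{e_1}{e_n}=\innpr{(T_1^{*}T_2^{*})^{k}e_1}{e_n}=\innpr{e_1}{(T_2T_1)^{k}e_n}=1$, contradicting $n\neq 1$. Notice this is where the Collatz hypothesis actually does work: it replaces the arithmetic descent $(3n+1)/4<n$ that powered Lemma~\ref{lem:Projection 1} but has no analogue here.
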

\begin{proof}
We remark that $T_2T_1e_1=e_1$ and $e_1=T_1^*T_2^*e_1$.

Let $Q \in C^{*}(S_1,S_2)'=C^{*}(T_1,T_2)'$ be a projection and assume $Qe_1 \neq 0$. We will show $Qe_1=e_1$.

If there exists $n \in (N_1 \cup N_2)\backslash\{1\}$ satisfying $\innpr{Q e_1}{e_n} \neq 0$, then
\begin{align*}
0 \neq \innpr{Q e_1}{e_n}=\innpr{QT_1^*T_2^*e_1}{e_n}=\innpr{T_1^*T_2^*Qe_1}{e_n}=\innpr{Qe_1}{T_2T_1e_n}.
\end{align*}
Similarly, for every $k \in \N$,
\begin{align*}
0 \neq \innpr{Q e_1}{(T_2T_1)^{k}e_n}.
\end{align*}
Then, $(T_2T_1)^{k}e_n \notin \ker{T_1}$. Therefore,
\begin{equation*}
(T_2T_1)^{k}e_n=e_{P^{2k}(n)}
\end{equation*}
and
\begin{equation*}
P^{2k}(n) \in N_1.
\end{equation*}
If the Collatz conjecture holds, then $1 \in \orb(n;P)$ by Proposition \ref{prop:Transformation}. It follows that $P^{2k}(n) \in \{1,4\}$ for sufficiently large $k \in \N$. Since $P^{2k}(n) \in N_1$, $P^{2k}(n)=1$.
Then,
\begin{equation*}
(T_2T_1)^{k}e_n=e_1.
\end{equation*}
which implies that
\begin{align*}
\innpr{e_1}{e_n}&=\innpr{(T_1^{*}T_2^{*})^{k}e_1}{e_n}
=\innpr{e_1}{(T_2T_1)^{k}e_n}
=\innpr{e_1}{e_1}=1.
\end{align*}
But, this contradicts the assumption $n \in (N_1 \cup N_2)\backslash\{1\}$.
Thus, $\innpr{Q e_1}{e_n}=0$ for every $n \in (N_1 \cup N_2)\backslash\{1\}$.

Hence,
\begin{align*}
Q e_1=\sum_{n \in N_1 \cup N_2}\innpr{Q e_1}{e_n}e_n=\innpr{Q e_1}{e_1}e_1=\norm{Q e_1}^2 e_1.
\end{align*}
Since $Q e_1 \neq 0$, it holds that $\norm{Q e_1}=1$ and $Q e_1=e_1$.
\end{proof}

\begin{theorem}[Necessary condition] \label{thm:Necessary condition 2}
If the Collatz conjecture holds, then $C^{*}(S_1,S_2)$ has no non-trivial reducing subspaces.
\end{theorem}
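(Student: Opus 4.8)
The plan is to mirror the proof of Theorem \ref{thm:Necessary condition 1}. Since $C^{*}(S_1,S_2)=C^{*}(T_1,T_2)$, the commutant $C^{*}(S_1,S_2)'$ is a von Neumann algebra and hence the norm-closed linear span of the projections it contains (\cite[Chapter 2, Proposition 13.3]{Co00GSM}). Combining this with Theorem \ref{thm:Irreducibility}, the statement to be proved is equivalent to: assuming the Collatz conjecture, the only projections in $C^{*}(S_1,S_2)'$ are $0$ and $I$. So I would first fix a projection $Q \in C^{*}(S_1,S_2)'$ and apply Lemma \ref{lem:Projection 2}, which under the Collatz conjecture yields $Qe_1 \in \{0,e_1\}$. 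It then remains to propagate this dichotomy to every basis vector $e_n$ with $n \in N_1 \cup N_2$.

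Next, for an arbitrary $n \in N_1 \cup N_2$, I would invoke Proposition \ref{prop:Transformation}: the Collatz conjecture gives $1 \in \orb(n;P)$, so there is $k \in \N$ with $P^{k}(n)=1$. Exactly as in the proof of Lemma \ref{lem:Equality 1} (equivalently Lemma \ref{lem:Equality 2}), one produces a word $(i_1,\dots,i_k) \in \{1,2\}^{*}$ with $(T_{i_1}T_{i_2}\cdots T_{i_k})e_n=e_1$, and the partial-isometry computation there also delivers the adjoint identity $(T_{i_1}T_{i_2}\cdots T_{i_k})^{*}e_1=e_n$. Because $C^{*}(S_1,S_2)'=C^{*}(T_1,T_2)'$, the projection $Q$ commutes with $T_1$ and $T_2$, hence with $(T_{i_1}T_{i_2}\cdots T_{i_k})^{*}$, and therefore
\[
Qe_n = Q(T_{i_1}T_{i_2}\cdots T_{i_k})^{*}e_1 = (T_{i_1}T_{i_2}\cdots T_{i_k})^{*}Qe_1 .
\]
If $Qe_1=0$ this gives $Qe_n=0$ for every $n \in N_1 \cup N_2$, whence $Q=0$; if $Qe_1=e_1$ it gives $Qe_n=(T_{i_1}T_{i_2}\cdots T_{i_k})^{*}e_1=e_n$ for every $n$, whence $Q=I$. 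Thus $C^{*}(S_1,S_2)'=\C I$, and by Theorem \ref{thm:Irreducibility} the algebra $C^{*}(S_1,S_2)$ has no non-trivial reducing subspaces.

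The only point requiring care, inherited from Lemma \ref{lem:Equality 1}, is the construction of the word $(i_1,\dots,i_k)$ realizing $e_n \mapsto e_1$ together with the adjoint identity $(T_{i_1}T_{i_2}\cdots T_{i_k})^{*}e_1=e_n$; the rest is bookkeeping. Since that argument has already been carried out and $C^{*}(S_1,S_2)=C^{*}(T_1,T_2)$ allows it to be reused verbatim, I do not anticipate a real obstacle: the proof is essentially a transcription of Theorem \ref{thm:Necessary condition 1} with $N_1 \cup N_2$ in place of $\N$, the first-return map $P$ in place of $f$, and Lemma \ref{lem:Projection 2} in place of Lemma \ref{lem:Projection 1}.
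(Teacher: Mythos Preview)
Your proposal is correct and follows exactly the route the paper intends: its proof reads ``By using Lemma \ref{lem:Equality 2} and \ref{lem:Projection 2}, the proof is the same as Theorem \ref{thm:Necessary condition 1},'' and you have faithfully unpacked that, replacing $\N$ by $N_1\cup N_2$, $f$ by $P$ (via Proposition \ref{prop:Transformation}), and Lemma \ref{lem:Projection 1} by Lemma \ref{lem:Projection 2}. The one delicate step you flag---constructing the word with $(T_{i_1}\cdots T_{i_k})e_n=e_1$ and deducing $(T_{i_1}\cdots T_{i_k})^{*}e_1=e_n$---is precisely the computation from the proof of Lemma \ref{lem:Equality 1}/\ref{lem:Equality 2}, so there is no gap.
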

\begin{proof}
By using Lemma \ref{lem:Equality 2} and \ref{lem:Projection 2}, the proof is the same as Theorem \ref{thm:Necessary condition 1}.
\end{proof}

From Theorem \ref{thm:Necessary condition 2} and Corollary \ref{cor:Sufficient condition 3}, we obtain the next result.
\begin{theorem}[Necessary and sufficient condition]\label{thm:CuntzNS}
The following are equivalent:
\begin{enumerate}[(i)]
\item $C^{*}(S_1,S_2)$ has no non-trivial reducing subspaces;
\item The Collatz conjecture holds.
\end{enumerate}
\end{theorem}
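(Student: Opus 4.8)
The plan is to combine the two halves that have already been assembled. By Corollary \ref{cor:Sufficient condition 3}, if $C^{*}(S_1,S_2)$ has no non-trivial reducing subspaces, then the Collatz conjecture holds; this is the $(\mathrm{i}) \Rightarrow (\mathrm{ii})$ direction and requires no further work. For the converse, Theorem \ref{thm:Necessary condition 2} already states that if the Collatz conjecture holds, then $C^{*}(S_1,S_2)$ has no non-trivial reducing subspaces. So at the level of the bare statement, the proof is simply: ``Immediate from Theorem \ref{thm:Necessary condition 2} and Corollary \ref{cor:Sufficient condition 3}.''

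Since Theorem \ref{thm:Necessary condition 2} is itself proved only by reference (``the proof is the same as Theorem \ref{thm:Necessary condition 1}''), I would want to make sure that reduction is airtight, so I would also spell out the argument behind \ref{thm:Necessary condition 2}. Here the first step is the same structural observation used in Theorem \ref{thm:Necessary condition 1}: since $C^{*}(S_1,S_2)'$ is a von Neumann algebra, it is the norm-closed linear span of its projections, so $C^{*}(S_1,S_2)$ has no non-trivial reducing subspaces if and only if the only projections in $C^{*}(S_1,S_2)'$ are $0$ and $I$ (using Theorem \ref{thm:Irreducibility}). Second step: assume the Collatz conjecture and take any projection $Q \in C^{*}(S_1,S_2)' = C^{*}(T_1,T_2)'$. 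By Lemma \ref{lem:Projection 2}, $Qe_1 \in \{0, e_1\}$. Third step: for arbitrary $n \in N_1 \cup N_2$, Proposition \ref{prop:Transformation} gives $1 \in \orb(n;P)$, hence $1 \sim n$, so (exactly as in Lemma \ref{lem:Equality 1}) there is a word $(T_{i_1}\cdots T_{i_k})$ with $(T_{i_1}\cdots T_{i_k})^{*}e_1 = e_n$. Since $Q$ commutes with every $T_{i_j}$ and hence with $(T_{i_1}\cdots T_{i_k})^{*}$, we get $Qe_n = (T_{i_1}\cdots T_{i_k})^{*}Qe_1$, which is $0$ for all $n$ if $Qe_1 = 0$ (so $Q = 0$) and $e_n$ for all $n$ if $Qe_1 = e_1$ (so $Q = I$). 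Therefore $C^{*}(S_1,S_2)' = \C I$, which is the desired conclusion.

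The only genuinely substantive input is Lemma \ref{lem:Projection 2}, and that is already proved in the excerpt; everything downstream is the commutant-of-a-$C^{*}$-algebra bookkeeping from Theorem \ref{thm:Irreducibility} plus the word-construction from Lemma \ref{lem:Equality 1}. So I do not anticipate a real obstacle in the final theorem itself. If I were stress-testing the chain, the one place I would look hardest is the passage in Lemma \ref{lem:Equality 2} where the proof is asserted to be ``the same as Lemma \ref{lem:Equality 1}'': one should check that $T_1, T_2$ built from the first-return map $P$ are still partial isometries with mutually orthogonal ranges (which follows from Proposition \ref{prop:First-return}, since $P|_{N_1}$ and $P|_{N_2}$ are injective with disjoint images $N_2$ and $N_1 \cup N_2$ — wait, these images are not disjoint, so the orthogonality of the ranges of $T_1$ and $T_2$ comes instead from $T_1$ and $T_2$ having orthogonal \emph{domains} $N_1$ and $N_2$), so that the norm estimate $\norm{(T_{i_1}\cdots T_{i_k})^{*}} \le 1$ used there still holds. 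Granting that, the final theorem is just the concatenation of the two preceding results.
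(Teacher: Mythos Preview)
Your proposal is correct and matches the paper's own proof, which is literally the one-line concatenation ``From Theorem \ref{thm:Necessary condition 2} and Corollary \ref{cor:Sufficient condition 3}''; your expanded verification of Theorem \ref{thm:Necessary condition 2} also faithfully reproduces the argument of Theorem \ref{thm:Necessary condition 1} in the first-return-map setting. The tangential remark about orthogonal ranges is unnecessary (and, as you noticed, not quite right), but it plays no role: all that Lemma \ref{lem:Equality 2} needs is that $T_1,T_2$ are partial isometries, which follows from the injectivity of $P|_{N_1}$ and $P|_{N_2}$ in Proposition \ref{prop:First-return}.
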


\section{Generalization to dynamical systems with bounded condition} \label{sec:Generalization}

In this section, we will generalize the Collatz conjecture and formulations as operator theory. Assume that $X$ is an arbitrary set whose cardinality is $\aleph_0$ and $f:X \to X$ is a map.

\subsection{Bounded condition}

First, we introduce the notion of the bounded condition for $f$ as below.
\begin{definition}[Bounded condition]
When there exist disjoint subsets $X_1,\\X_2, \cdots ,X_k$ of $X$, where $k \in \N$ such that:
\begin{enumerate}[(i)]
\item $\bigcup_{i=1}^k X_i=X$;
\item $f|_{X_i}$ is injective, where $1 \leq i \leq k$,
\end{enumerate}
we say that $f$ satisfies the bounded condition.
\end{definition}

\begin{example}
\begin{enumerate}[(i)]
\item ($qx{+}1$-function)
Let $q>0$ be an odd integer. The $qx{+}1$-function $f_q:\N \to \N$ is defined by:
\begin{equation*}
f_q(n)=
\begin{cases}
qn+1, & n:\text{odd},\\
n/2, & n:\text{even}.
\end{cases}
\end{equation*}
The restriction of $f_q$ to the set of all positive odd integers is injective. It is also true for the set of all positive even integers. Thus, $f_q$ satisfies the bounded condition.

\item ($3x{+}d$-function)
Let $d>0$ be an odd integer. The $3x{+}d$-function $f_d:\N \to \N$ is defined by:
\begin{equation*}
f_d(n)=
\begin{cases}
3n+d, & n:\text{odd},\\
n/2, & n:\text{even}.
\end{cases}
\end{equation*}
The restriction of $f_d$ to the set of all positive odd integers is injective. It is also true for the set of all positive even integers. Thus, $f_d$ satisfies the bounded condition.

\end{enumerate}
\end{example}

Hereinafter, we will assume that $f$ satisfies the bounded condition.
Now, we generalize the Collatz conjecture as the following problem:
\begin{problem}
For given $X$ and $f$, does it hold that $x \sim y$ for every $x,y \in X$?
\end{problem}

It is easy to see that this problem does not have an affirmative answer for every $f$ satisfying the bounded condition. When $f=\id_X$, $f$ satisfies the bounded condition and $x \sim y$ implies $x=y$. This gives a counterexample.
Thus, our interests are when the statement of the problem is true for $f$ and how we can verify the statement is true or false for $f$.

In order to formulate this problem by operator theory, let $\H$ be a Hilbert space on $\C$ with $\dim\H=\aleph_0$ and $\{e_x\}_{x \in X}$ be a C.O.N.S. for $\H$.
We define $T:\H\to\H$ by:
\begin{equation*}
Te_x=e_{f(x)}, \text{ where }x \in X,
\end{equation*}
and
\begin{equation*}
T\left(\sum_{x \in F} a_x e_x\right)=\sum_{x \in F} a_x Te_x, \text{ where $F$ is a finite subset of $X$, } a_x \in \C.
\end{equation*}
According to the bounded condition for $f$, $f^{-1}(x)$ includes at most $k$ elements for any $x \in X$ and so
\begin{align*}
\abs{\sum_{y \in f^{-1}(x)}a_y}^2 \leq (\sum_{y \in f^{-1}(x)}\abs{a_y})^2 \leq k\sum_{y \in f^{-1}(x)}\abs{a_y}^2.
\end{align*}
Since
\begin{align*}
\sum_{x \in X}\abs{\sum_{y \in f^{-1}(x)}a_y}^2
\leq\sum_{x \in X}k\sum_{y \in f^{-1}(x)}\abs{a_y}^2
=k\sum_{x \in X}\abs{a_x}^2,
\end{align*}
there exists $\sum_{x \in X}\left(\sum_{y \in f^{-1}(x)}a_x\right)e_x \in \H$ for every $a=\sum_{x \in X}a_x e_x \in \H$. Thus, we can define $Ta$ as
\begin{equation*}
Ta=\sum_{x \in X}a_x Te_x=\sum_{x \in X}a_x e_{f(x)}=\sum_{x \in X}\left(\sum_{y \in f^{-1}(x)}a_y\right)e_x,
\end{equation*}
and then, $T \in \B(\H)$.

In the setting above, we obtain the following relations.
\begin{lemma} \label{lem:Subset 3}
For every $x \in X$,
\begin{equation*}
\overline{C^{*}(T)e_x} \subseteq \overline{\Span\{e_y \mid y \in X, y \sim x\}}.
\end{equation*}
\end{lemma}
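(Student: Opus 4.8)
The plan is to reproduce the proof of Lemma \ref{lem:Subset} almost verbatim, the only new structural input being the bounded condition, which guarantees that every fibre $f^{-1}(x)$ is finite. First I would set $\M=\overline{\Span\{e_y \mid y \in X, y \sim x\}}$, which is a closed linear subspace of $\H$ containing $e_x$ (since $x \sim x$). The whole argument then reduces to the single claim: if $e_z \in \M$, then $Te_z \in \M$ and $T^{*}e_z \in \M$. Granting the claim, $\M$ is invariant under $T$ and under $T^{*}$, hence invariant under every finite linear combination of products of $T$ and $T^{*}$; being closed, it therefore contains $\overline{C^{*}(T)e_x}$, which is exactly the assertion of the lemma.

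To prove the claim, pick $z \in X$ with $z \sim x$. For $T$, note $f(z) \in \orb(z;f)$, so $f(z) \sim z \sim x$ by transitivity of $\sim$ (Lemma on equivalence relations), and hence $Te_z = e_{f(z)} \in \M$. For the adjoint, I would expand $T^{*}e_z$ along the C.O.N.S.\ using Theorem \ref{thm:Fourier series}, the definition of the adjoint, and orthonormality:
\[
T^{*}e_z=\sum_{w \in X}\innpr{T^{*}e_z}{e_w}e_w=\sum_{w \in X}\innpr{e_z}{Te_w}e_w=\sum_{w \in X}\innpr{e_z}{e_{f(w)}}e_w=\sum_{w \in f^{-1}(z)}e_w.
\]
Here the bounded condition is used twice: it makes $T$ a well-defined element of $\B(\H)$ (as already established in the excerpt), and it ensures the final sum is finite, so that it is a genuine vector of $\H$. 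Finally, every $w \in f^{-1}(z)$ satisfies $f(w)=z \in \orb(w;f)$, so $w \sim z \sim x$, whence $e_w \in \M$; therefore $T^{*}e_z=\sum_{w \in f^{-1}(z)}e_w \in \M$, completing the claim.

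I do not expect any real obstacle: the bounded condition was introduced precisely so that $T$ is bounded and the preimage sums are finite, and the equivalence relation $\sim$ is by construction stable under applying $f$ and, along common orbits, under passing to preimages. The only point requiring a moment's care is checking that the term-by-term computation of $T^{*}e_z$ above is legitimate — that is, that $\innpr{e_z}{e_{f(w)}}=\delta_{z,f(w)}$ — which is immediate from the orthonormality of $\{e_x\}_{x \in X}$. The argument is otherwise identical in form to Lemma \ref{lem:Subset}, with $\N$ replaced by the abstract set $X$ and the Collatz map replaced by $f$.
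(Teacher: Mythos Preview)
Your proposal is correct and follows essentially the same approach as the paper's own proof: set $\M=\overline{\Span\{e_y \mid y\sim x\}}$, verify that $\M$ is invariant under both $T$ and $T^{*}$ by computing $Te_z=e_{f(z)}$ and $T^{*}e_z=\sum_{w\in f^{-1}(z)}e_w$ via the Fourier expansion, and conclude $\overline{C^{*}(T)e_x}\subseteq\M$. Your added remarks on the role of the bounded condition and on why the claim suffices are accurate and make the argument slightly more explicit than the paper's version.
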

\begin{proof}
Let $\M=\overline{\Span\{e_y \mid y \in X, x \sim y\}}$. We claim the following statement:
\begin{itemize}
\item If $e_z \in \M$, then $Te_z, T^{*}e_z \in \M$.
\end{itemize}
Once this is done, as $e_x \in \M$ and $\M$ is a closed linear subspace, we obtain $\overline{C^{*}(T)e_x} \subseteq \M$.

To prove the claim, pick $z \in X$ such that $z \sim x$. Since $e_z \in \M$ and $z \sim f(z)$, it holds that $Te_z=e_{f(z)} \in \M$.

In order to see $T^{*}e_z \in M$, we use Theorem \ref{thm:Fourier series} and we obtain
\begin{align*}
T^{*}e_z&=\sum_{w \in X}\innpr{T^{*}e_z}{e_w}e_w
=\sum_{w \in X}\innpr{e_z}{Te_w}e_w\\
&=\sum_{w \in X}\innpr{e_z}{e_{f(w)}}e_w
=\sum_{w \in f^{-1}(z)}e_w.
\end{align*}
When $w \in f^{-1}(z)$, it holds that $w \sim z$. This implies that $w \sim x$ and we obtain $e_w \in \M$.
Therefore, $T^{*}e_z=\sum_{w \in f^{-1}(z)}e_w \in \M$.
\end{proof}

\begin{theorem} \label{thm:Sufficient condition 4}
If there exists $x \in X$ such that $e_x$ is a cyclic vector for $C^{*}(T)$, then $y \sim z$ for every $y,z \in X$.
\end{theorem}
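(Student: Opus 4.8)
The plan is to mimic the proof of Theorem \ref{thm:Sufficient condition 1}, replacing the Collatz-specific Lemma \ref{lem:Subset} by its generalization Lemma \ref{lem:Subset 3}. Suppose $e_x$ is a cyclic vector for $C^{*}(T)$. By the definition of a cyclic vector together with Lemma \ref{lem:Subset 3},
\[
\H = \overline{C^{*}(T)e_x} \subseteq \overline{\Span\{e_y \mid y \in X,\ y \sim x\}} \subseteq \H,
\]
so the closed subspace $\M := \overline{\Span\{e_y \mid y \in X,\ y \sim x\}}$ is all of $\H$.

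Next I would argue that this forces $z \sim x$ for every $z \in X$. Indeed, fix $z \in X$ and suppose $z \not\sim x$. Then $e_z$ is orthogonal to $e_y$ for every $y$ with $y \sim x$, hence $e_z \in \M^{\bot}$. But $e_z \in \H = \M$, so by the uniqueness in the orthogonal decomposition (Theorem \ref{thm:orthogonal decomposition}) applied to $e_z = e_z + 0$ versus $e_z = 0 + e_z$, we get $e_z = 0$, which is impossible since $\{e_x\}_{x \in X}$ is a C.O.N.S. Therefore $z \sim x$, and since $z \in X$ was arbitrary, every element of $X$ is $\sim$-equivalent to $x$.

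Finally, recalling that $\sim$ is an equivalence relation, for any $y,z \in X$ we have $y \sim x$ and $z \sim x$, whence $y \sim z$ by symmetry and transitivity. This completes the argument.

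Since this theorem is a verbatim generalization of Theorem \ref{thm:Sufficient condition 1}, there is no genuine obstacle here; the only point deserving care is the orthogonality step showing that $\M = \H$ actually rules out any $z$ with $z \not\sim x$, which rests on the fact that the $e_y$ form an orthonormal family and on the uniqueness clause of the orthogonal decomposition.
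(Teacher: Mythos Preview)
Your proof is correct and follows essentially the same approach as the paper: both apply Lemma \ref{lem:Subset 3} to obtain $\H=\overline{\Span\{e_y\mid y\sim x\}}$, deduce that every $z\in X$ satisfies $z\sim x$, and then conclude by transitivity of $\sim$. The only difference is that you spell out the orthogonality argument for the implication ``$\M=\H\Rightarrow z\sim x$ for all $z$'' explicitly, whereas the paper simply asserts it.
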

\begin{proof}
When $e_x$ is a cyclic vector for $C^{*}(T)$,
\begin{align*}
\H=\overline{C^{*}(T)e_x}\subseteq\overline{\Span\{e_y \mid y \in X, y \sim x\}}
\end{align*}
by Lemma \ref{lem:Subset 3}.
It follows that $y \sim x$ for every $y \in X$. Pick any $y,z \in X$. Since $y \sim x$ and $z \sim x$, it holds that $y \sim z$.
\end{proof}

\begin{corollary}
If $C^{*}(T)$ has no non-trivial reducing subspaces, then $x \sim y$ for every $x,y \in X$.
\end{corollary}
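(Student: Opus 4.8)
The plan is to mirror the proof of Corollary \ref{cor:SingleS} essentially verbatim, since the present statement is the exact generalization of the single-operator formulation to a set $X$ with a bounded-condition map $f$: the only ingredients used in that earlier argument were Theorem \ref{thm:Irreducibility} and the corresponding sufficient-condition theorem, and both are available here, the latter in the form of Theorem \ref{thm:Sufficient condition 4}.

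First I would apply Theorem \ref{thm:Irreducibility} to the $C^{*}$-algebra $C^{*}(T) \subseteq \B(\H)$. The hypothesis that $C^{*}(T)$ has no non-trivial reducing subspaces is, by the equivalence (i) $\iff$ (iii) of that theorem, the same as saying that every non-zero vector of $\H$ is a cyclic vector for $C^{*}(T)$. Next, since $X$ has cardinality $\aleph_0$ it is in particular non-empty, so I may fix some $x \in X$; the corresponding basis vector $e_x$ of the C.O.N.S.\ $\{e_y\}_{y \in X}$ is non-zero, hence a cyclic vector for $C^{*}(T)$ by the preceding step.

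Finally, applying Theorem \ref{thm:Sufficient condition 4} to this cyclic vector $e_x$ gives $y \sim z$ for every $y, z \in X$, which is exactly the assertion of the corollary.

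There is essentially no obstacle here: all of the substance is already encapsulated in Theorem \ref{thm:Irreducibility} and Theorem \ref{thm:Sufficient condition 4}, and the only point one must verify directly is the harmless observation that $X \neq \emptyset$, so that a basis vector $e_x$ exists to be fed into Theorem \ref{thm:Sufficient condition 4}. One could alternatively phrase the whole proof as a single sentence, but I would keep the two-step structure above for parallelism with Corollary \ref{cor:SingleS}.
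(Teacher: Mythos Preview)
Your proposal is correct and matches the paper's own proof essentially line for line: the paper invokes Theorem \ref{thm:Irreducibility} to conclude every non-zero vector is cyclic, specializes to $a=e_x$ for some $x\in X$, and then applies Theorem \ref{thm:Sufficient condition 4}. The only difference is cosmetic---you explicitly remark that $X\neq\emptyset$, which the paper leaves implicit.
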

\begin{proof}
From Theorem \ref{thm:Irreducibility}, if $C^{*}(T)$ has no non-trivial reducing subspaces, then every $a \in \H\backslash\{0\}$ is a cyclic vector for $C^{*}(T)$. Let $a=e_x$ for some $x \in X$. Then $x \sim y$ for every $x,y \in X$ by Theorem \ref{thm:Sufficient condition 4}.
\end{proof}

Next, we define $k$ partial isometries $T_1,T_2,\cdots,T_k \in \B(\H)$ by:
\begin{align*}
T_i e_x=
\begin{cases}
e_{f(x)}, & x \in X_i,\\
0, & x \notin X_i,
\end{cases}
\end{align*}
where $1 \leq i \leq k$. This setting induces the following relations.

\begin{lemma} \label{lem:Equality 4}
For every $x \in X$,
\begin{equation*}
\overline{C^{*}(T_1,T_2,\cdots,T_k)e_x}=\overline{\Span\{e_y \mid y \in X, y \sim x\}}.
\end{equation*}
\end{lemma}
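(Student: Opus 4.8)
The plan is to prove the two inclusions separately, following the blueprint of Lemma~\ref{lem:Equality 1} (and, for the easy inclusion, Lemma~\ref{lem:Subset 3}); the bounded condition supplies the one ingredient — partial isometry of each generator — that was automatic in the $3n{+}1$ setting. First I would record that each $T_i$ is a partial isometry: putting $\M_i = \overline{\Span\{e_x \mid x \in X_i\}}$, the operator $T_i$ carries the orthonormal family $\{e_x\}_{x \in X_i}$ to $\{e_{f(x)}\}_{x \in X_i}$, which is again orthonormal precisely because $f|_{X_i}$ is injective, so $T_i$ is isometric on $\M_i$; and $T_i e_x = 0$ for $x \notin X_i$, i.e.\ on $\M_i^{\bot} = \overline{\Span\{e_x \mid x \notin X_i\}}$, which is meaningful because $\bigcup_i X_i = X$ and the $X_i$ are disjoint, so every $e_x$ lies in exactly one $\M_i$. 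In particular $\norm{T_i} = \norm{T_i^{*}} \le 1$ for every $i$.

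For the inclusion $\subseteq$, set $\M = \overline{\Span\{e_y \mid y \in X,\ y \sim x\}}$. As in Lemma~\ref{lem:Subset 3} it suffices to show $\M$ is invariant under each $T_i$ and each $T_i^{*}$: if $z \sim x$ then $T_i e_z$ is either $0$ or $e_{f(z)}$ with $f(z) \sim z \sim x$, while by Theorem~\ref{thm:Fourier series}
\[
T_i^{*} e_z = \sum_{w \in X} \innpr{e_z}{T_i e_w} e_w = \sum_{w \in f^{-1}(z) \cap X_i} e_w,
\]
and every surviving $w$ satisfies $f(w) = z$, hence $w \sim z \sim x$ and $e_w \in \M$. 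Since $\M$ is closed, contains $e_x$, and is stable under the generators of $C^{*}(T_1,T_2,\cdots,T_k)$ and their adjoints, it contains $\overline{C^{*}(T_1,T_2,\cdots,T_k)e_x}$.

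For the reverse inclusion it is enough to place $e_y$ in $\overline{C^{*}(T_1,T_2,\cdots,T_k)e_x}$ for each $y$ with $y \sim x$. Choose non-negative integers $p, q$ with $f^{p}(y) = f^{q}(x)$; following the $f$-orbit of $y$ and selecting at each step the index of the unique $X_i$ containing the current point (possible since the $X_i$ cover $X$), one gets a tuple $(i_1,i_2,\cdots,i_p) \in \{1,2,\cdots,k\}^{*}$ with $(T_{i_1}T_{i_2}\cdots T_{i_p})e_y = e_{f^{p}(y)}$, and similarly $(j_1,j_2,\cdots,j_q)$ with $(T_{j_1}T_{j_2}\cdots T_{j_q})e_x = e_{f^{q}(x)}$ (an empty product being read as $I$ when $p$ or $q$ is $0$). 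Then
\[
\innpr{(T_{i_1}T_{i_2}\cdots T_{i_p})^{*} e_{f^{p}(y)}}{e_y} = \innpr{e_{f^{p}(y)}}{(T_{i_1}T_{i_2}\cdots T_{i_p}) e_y} = 1,
\]
while $\norm{(T_{i_1}T_{i_2}\cdots T_{i_p})^{*}} \le 1$ by the first step, so Parseval's identity forces every other Fourier coefficient of $(T_{i_1}T_{i_2}\cdots T_{i_p})^{*} e_{f^{p}(y)}$ to vanish and this vector equals $e_y$. Hence
\[
e_y = (T_{i_1}T_{i_2}\cdots T_{i_p})^{*} e_{f^{q}(x)} = (T_{i_1}T_{i_2}\cdots T_{i_p})^{*} (T_{j_1}T_{j_2}\cdots T_{j_q}) e_x \in C^{*}(T_1,T_2,\cdots,T_k) e_x,
\]
completing the proof.

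I do not expect a real obstacle here: the argument is a transcription of Lemma~\ref{lem:Equality 1}. The only point needing care is the bookkeeping in the first step — verifying that the two clauses of the bounded condition are exactly what is used, namely injectivity of $f|_{X_i}$ to make each $T_i$ a partial isometry, and $\bigcup_i X_i = X$ (with the $X_i$ disjoint) to lift an arbitrary segment of an $f$-orbit to a word $T_{i_1}T_{i_2}\cdots T_{i_p}$ in the generators. Once that is in place the Parseval trick from Lemma~\ref{lem:Equality 1} goes through unchanged.
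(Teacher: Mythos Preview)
Your proof is correct and follows essentially the same route as the paper: the inclusion $\subseteq$ is handled by invariance of $\M$ under each $T_i$ and $T_i^{*}$ (the paper just cites Lemma~\ref{lem:Subset 3}, whereas you spell out the needed adaptation to the individual $T_i$'s), and the reverse inclusion is the identical Parseval-and-norm-bound argument from Lemma~\ref{lem:Equality 1}. Your added verification that each $T_i$ is a partial isometry is stated without proof in the paper just before the lemma, so your write-up is in fact slightly more complete.
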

\begin{proof}
The proof of $\subseteq$ is the same as Lemma \ref{lem:Subset 3}.

In order to prove the converse, it suffices to see $e_y \in \overline{C^*(T_1,T_2)e_x}$ for every $y \in X$ such that $y \sim x$.
Let us pick $y \in X$ satisfying $y \sim x$. There exist two non-negative integers $m,n$ such that $f^m(y)=f^n(x)$. We can find an $m$-tuple $(i_1,i_2,\cdots,i_m) \in \{1,2,\cdots,k\}^{*}$ and an $n$-tuple $(j_1,j_2,\cdots,j_n) \in \{1,2,\cdots,k\}^{*}$ satisfying
\begin{align*}
(T_{i_1}T_{i_2} \cdots T_{i_m})e_y&=e_{f^m(y)},\\
(T_{j_1}T_{j_2} \cdots T_{j_n})e_x&=e_{f^n(x)}.
\end{align*}
It follows that
\begin{align*}
\innpr{(T_{i_1}T_{i_2} \cdots T_{i_m})^{*}e_{f^m(y)}}{e_y}
&=\innpr{e_{f^m(y)}}{(T_{i_1}T_{i_2} \cdots T_{i_m})e_y}\\
&=\innpr{e_{f^m(y)}}{e_{f^m(y)}}=1.
\end{align*}
Since $T_1,T_2,\cdots,T_k$ are partial isometries, it holds that $\norm{(T_{i_1}T_{i_2} \cdots T_{i_m})^{*}} \leq 1$. Thus,
\begin{align*}
1&=\abs{\innpr{(T_{i_1}T_{i_2} \cdots T_{i_m})^{*}e_{f^m(y)}}{e_y}}^2\\
&\leq\sum_{z \in X}\abs{\innpr{(T_{i_1}T_{i_2} \cdots T_{i_m})^{*}e_{f^m(y)}}{e_z}}^2
=\norm{(T_{i_1}T_{i_2} \cdots T_{i_m})^{*}e_{f^m(y)}}^2 \leq 1.
\end{align*}
That is, $\innpr{(T_{i_1}T_{i_2} \cdots T_{i_m})^{*}e_{f^m(y)}}{e_z}=0$ if $y \neq z$.
Hence
\begin{equation*}
(T_{i_1}T_{i_2} \cdots T_{i_m})^{*}e_{f^m(y)}=e_y
\end{equation*}
and
\begin{align*}
e_y&=(T_{i_1}T_{i_2} \cdots T_{i_m})^{*}e_{f^m(y)}
=(T_{i_1}T_{i_2} \cdots T_{i_m})^{*}e_{f^n(x)}\\
&=(T_{i_1}T_{i_2} \cdots T_{i_m})^{*}(T_{j_1}T_{j_2} \cdots T_{j_n})e_x.
\end{align*}
Therefore, it holds that $e_y \in \overline{C^{*}(T_1,T_2,\cdots,T_k)e_x}$, which completes the proof.
\end{proof}

From the former lemma, we obtain the following theorem and corollary.
\begin{theorem}[Necessary and sufficient condition] \label{thm:Equivalent 4}
The following statements are equivalent:
\begin{enumerate}[(i)]
\item There exists $x \in X$ such that $e_x$ is a cyclic vector for $C^{*}(T_1,T_2,\cdots,T_k)$; \label{kOperators}
\item $x \sim y$ for every $x,y \in X$. \label{kCollatz}
\end{enumerate}
\end{theorem}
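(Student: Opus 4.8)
The plan is to mimic, almost verbatim, the proof of Theorem~\ref{thm:Equivalent 1}, since Lemma~\ref{lem:Equality 4} is the exact analogue of Lemma~\ref{lem:Equality 1} in the generalized setting. The only structural difference is that there is no distinguished periodic point playing the role of $1$, so the equivalence must be phrased purely in terms of the relation $\sim$ on $X$ rather than in terms of reaching a fixed value.

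\medskip
\textbf{$(\ref{kOperators}) \implies (\ref{kCollatz})$.} Suppose $e_x$ is a cyclic vector for $C^{*}(T_1,T_2,\cdots,T_k)$ for some $x \in X$. By Lemma~\ref{lem:Equality 4},
\begin{align*}
\H=\overline{C^{*}(T_1,T_2,\cdots,T_k)e_x}=\overline{\Span\{e_y \mid y \in X, y \sim x\}}.
\end{align*}
Since $\{e_y\}_{y \in X}$ is a C.O.N.S.\ for $\H$, every $e_y$ lies in the right-hand side, so $y \sim x$ for every $y \in X$. For arbitrary $y,z \in X$ we then have $y \sim x$ and $z \sim x$, hence $y \sim z$ by transitivity (the Lemma on equivalence relations).

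\medskip
\textbf{$(\ref{kCollatz}) \implies (\ref{kOperators})$.} Conversely, assume $x \sim y$ for every $x,y \in X$. Fix any $x_0 \in X$. Then $\{y \in X \mid y \sim x_0\}=X$, so by Lemma~\ref{lem:Equality 4},
\begin{align*}
\overline{C^{*}(T_1,T_2,\cdots,T_k)e_{x_0}}=\overline{\Span\{e_y \mid y \in X, y \sim x_0\}}=\overline{\Span\{e_y \mid y \in X\}}=\H.
\end{align*}
Thus $e_{x_0}$ is a cyclic vector for $C^{*}(T_1,T_2,\cdots,T_k)$, which establishes~(\ref{kOperators}).

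\medskip
There is essentially no obstacle: the content is entirely carried by Lemma~\ref{lem:Equality 4}, whose proof in turn rests only on the bounded condition (guaranteeing $T_1,\dots,T_k$ are well-defined partial isometries, which was verified in the construction of $T$ and is analogous to the two-operator case). The one point worth a word of care is that in the reverse direction one needs the existence of at least one element $x_0 \in X$; this is immediate since $X$ has cardinality $\aleph_0$ and is in particular non-empty.
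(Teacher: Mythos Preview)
Your proof is correct and follows essentially the same argument as the paper's own proof: both directions are immediate applications of Lemma~\ref{lem:Equality 4}, with transitivity of $\sim$ used in $(\ref{kOperators})\Rightarrow(\ref{kCollatz})$ and an arbitrary choice of base point in $(\ref{kCollatz})\Rightarrow(\ref{kOperators})$. Your additional remarks (on the C.O.N.S., on transitivity, and on $X\neq\emptyset$) are harmless elaborations of steps the paper leaves implicit.
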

\begin{proof}
$(\ref{kOperators}) \implies (\ref{kCollatz})$.
Suppose that there exists $x \in X$ such that $e_x$ is a cyclic vector for $C^{*}(T_1,T_2,\cdots,T_k)$. By using Lemma \ref{lem:Equality 4}, it follows that
\begin{align*}
\H=\overline{C^{*}(T_1,T_2,\cdots,T_k)e_x}
=\overline{\Span\{e_y \mid y \in X, y \sim x\}}.
\end{align*}
Then $y \sim x$ for every $y \in X$, which implies that $y \sim z$ for every $y,z \in X$.

$(\ref{kCollatz}) \implies (\ref{kOperators})$.
Assume $x \sim y$ for every $x,y \in X$ and fix $x \in X$. Then $x \sim y$ for every $y \in X$.
By using Lemma \ref{lem:Equality 4} again,
\begin{align*}
\overline{C^{*}(T_1,T_2,\cdots,T_k)e_x}&=\overline{\Span\{e_y \mid y \in X, y \sim x\}}
=\overline{\Span\{e_y \mid y \in X\}}
=\H.
\end{align*}
Thus, $e_x$ is a cyclic vector for $C^{*}(T_1,T_2,\cdots,T_k)$.
\end{proof}

\begin{corollary}[Sufficient condition]
If $C^{*}(T_1,T_2,\cdots,T_k)$ has no non-trivial reducing subspaces, then $x \sim y$ for every $x,y \in X$.
\end{corollary}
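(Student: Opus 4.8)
The plan is to derive this corollary directly from Theorem \ref{thm:Equivalent 4} together with the characterization of irreducibility in Theorem \ref{thm:Irreducibility}, exactly as was done for Corollary \ref{cor:SingleS} and Corollary \ref{cor:Sufficient condition 2} in the previous section. No new machinery is needed; the work has already been carried out in Lemma \ref{lem:Equality 4} and Theorem \ref{thm:Equivalent 4}.

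Concretely, I would argue as follows. Assume $C^{*}(T_1,T_2,\cdots,T_k)$ has no non-trivial reducing subspaces. By Theorem \ref{thm:Irreducibility} (the equivalence of (i) and (iii)), every non-zero vector of $\H$ is then a cyclic vector for $C^{*}(T_1,T_2,\cdots,T_k)$. Since $X \neq \emptyset$, fix any $x \in X$; the basis vector $e_x$ is non-zero, hence cyclic for $C^{*}(T_1,T_2,\cdots,T_k)$. Thus statement (\ref{kOperators}) of Theorem \ref{thm:Equivalent 4} holds, and the implication $(\ref{kOperators}) \implies (\ref{kCollatz})$ of that theorem yields $x \sim y$ for every $x,y \in X$, which is the desired conclusion.

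I do not expect any obstacle here: the only subtlety is the bookkeeping already absorbed into the earlier results (the bounded condition guarantees $T$ and each $T_i$ are well-defined bounded operators, and $T_1,\dots,T_k$ are partial isometries because each $f|_{X_i}$ is injective). Once Theorem \ref{thm:Equivalent 4} is in hand, the corollary is a one-line application of Theorem \ref{thm:Irreducibility}. If one wished, one could also phrase the proof via the commutant: irreducibility gives $C^{*}(T_1,\cdots,T_k)' = \C I$, but invoking the cyclic-vector formulation of Theorem \ref{thm:Irreducibility} is the most economical route and mirrors the structure of the paper's earlier corollaries.
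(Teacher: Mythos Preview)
Your proposal is correct and follows essentially the same approach as the paper: the paper's proof also observes that irreducibility makes every $e_x$ cyclic and then invokes Theorem \ref{thm:Equivalent 4}. Your version is slightly more explicit in citing Theorem \ref{thm:Irreducibility}, but the argument is identical.
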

\begin{proof}
If $C^{*}(T_1,T_2,\cdots,T_k)$ has no non-trivial reducing subspaces, then $e_x$ is a cyclic vector for every $x \in X$. Then $x \sim y$ for every $x,y \in X$ by Theorem \ref{thm:Equivalent 4}.
\end{proof}

\subsection{Separating condition}

Secondly, we define the separating condition for $f$.
Let $n \in \N$ and $\sigma$ be a cyclic permutation of $\{1,2,\cdots,n\}$ defined by:
\begin{equation*}
1 \mapsto 2 \mapsto 3 \mapsto \cdots \mapsto n-1 \mapsto n \mapsto 1.
\end{equation*}
For an $n$-tuple of numbers $(i_1,i_2,\cdots,i_n)$, we say that it is aperiodic if
\begin{equation*}
(i_{\sigma^j(1)},i_{\sigma^j(2)},\cdots,i_{\sigma^j(n)})\neq (i_1,i_2,\cdots,i_n)
\end{equation*}
for every $1 \leq j <n$.

\begin{definition}[Separating condition]
Let $x \in X$. When the following conditions hold:
\begin{enumerate}[(i)]
\item There exists $n \in \N$ satisfying $f^n(x)=x$;
\item For the minimum $n$ as above, the $n$-tuple $(i_1,i_2,\cdots,i_n) \in \{1,2,\cdots,k\}^{*}$, such that $f^{j-1}(x) \in X_{i_j}$ where $1 \leq j \leq n$, is aperiodic,
\end{enumerate}
we say that $f$ satisfies the separating condition for $x$.
\end{definition}

For example, the Collatz map satisfies the separating condition for $1$. This holds as follows: $f^3(1)=1$ and $1 \in X_1, 2,4 \in X_2$ where $X_1,X_2$ are the sets of all positive odd numbers and even numbers. Since $3$-tuple $(1,2,2)$ is aperiodic, we obtain the conclusion.

\begin{example}
\begin{enumerate}[(i)]
\item ($qx{+}1$-function for $q=5$, Mersenne numbers)
Let $X_1$ and $X_2$ be the sets of all positive odd numbers and even numbers.
For $q=5$, $f_5$ makes a loop as follows:
\begin{equation*}
1 \mapsto 6 \mapsto 3 \mapsto 16 \mapsto 8 \mapsto 4 \mapsto 2 \mapsto 1.
\end{equation*}
Since $7$-tuple $(1,2,1,2,2,2,2)$ is aperiodic, $f_5$ satisfies the separating condition for $1$.
Let $q>0$ be a Mersenne number, i.e., $q=2^k-1$ for some $k \in \N$. Then, $f_q(1)=2^k, f_q^2(1)=2^{k-1}, \cdots, f_q^{k+1}(1)=1$ and $k+1$-tuple $(1,2,\cdots,2)$ is aperiodic. Hence, $f_q$ satisfies the separating condition for $1$.

\item ($3x{+}d$-function)
Let $d>0$ be an odd integer. Let $X_1=\{n \in \N \mid n:odd\}$ and $X_2=\{n \in \N \mid n:even\}$. Then, $f_d(d)=4d, f_d^2(d)=2d, f_d^3(d)=d$ and $3$-tuple $(1,2,2)$ is aperiodic. Hence, $f_d$ satisfies the separating condition for $d$.

\end{enumerate}
\end{example}

For a map which satisfies separating condition, the following holds.
\begin{lemma} \label{lem:Generating}
Let $x \in X$, $\M=\overline{C^{*}(T_1,T_2,\cdots,T_k)e_x}$, and assume $f$ satisfies the separating condition for $x \in X$.
Then, $\overline{C^{*}(T_1,T_2,\cdots,T_k)a}=\M$ for every $a \in \M\backslash\{0\}$.
\end{lemma}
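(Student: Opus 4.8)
The plan is to show that $\M$ decomposes as a Hilbert-space orthonormal span $\M = \overline{\Span\{e_y \mid y \in X,\ y \sim x\}}$ by Lemma \ref{lem:Equality 4}, and that the separating condition forces the subalgebra $C^*(T_1,\dots,T_k)$ to act ``transitively enough'' on this span that every non-zero vector is cyclic. The clean way to organize this is to produce, for each pair $y, z$ with $y \sim x$ and $z \sim x$, an operator $V \in C^*(T_1,\dots,T_k)$ with $V e_z = e_y$. By Lemma \ref{lem:Equality 4}, $e_z = W e_x$ for some product-of-generators-and-adjoints operator $W$, and similarly $e_y = W' e_x$; combined with the computation in the proof of Lemma \ref{lem:Equality 4} that adjoints of generator-products move basis vectors backward along orbits (so $W^* e_z = e_x$), one gets $e_y = W'(W)^* e_z$. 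So the reachability between basis vectors is already in hand from the earlier lemma.

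Next I would handle a general non-zero $a = \sum_{y \sim x} a_y e_y \in \M$. Pick some index $y_0$ with $a_{y_0} \neq 0$. The goal is to extract $e_{y_0}$ (up to scalar) from $a$ using operators in $C^*(T_1,\dots,T_k)$, i.e.\ to produce $R \in C^*(T_1,\dots,T_k)$ with $R a = c\, e_{y_0}$ for some scalar $c \neq 0$; then $\overline{C^*(T_1,\dots,T_k)a} \supseteq \overline{C^*(T_1,\dots,T_k)e_{y_0}} = \M$ by the reachability just established, and the reverse inclusion is trivial since $a \in \M$ and $\M$ is $C^*(T_1,\dots,T_k)$-invariant. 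To build $R$: choose $k'$ with $f^{k'}(y_0) = x' := $ (the periodic point, which is $x$ itself if $x$ is periodic, or more precisely the point in $\orb(x;f)$ lying on the cycle) — here is where I use condition (i) of the separating condition, that $x$ is periodic, say $f^n(x) = x$ with minimal period $n$. Let $V$ be the product of generators $T_{i_1}\cdots$ realizing $f^{k'}$ on $e_{y_0}$, so $V e_{y_0} = e_{x}$ (after possibly also routing $y_0$'s orbit onto the cycle and then around it), and let $E = V^* V'$ where $V' = T_{i_n}\cdots T_{i_1}$ is the generator-product implementing one full trip around the $n$-cycle starting and ending at $x$. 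The key point: $E' := V' $ satisfies $V' e_x = e_x$, and I want the projection $V' (V')^* $ — or rather a suitable word — to annihilate every $e_w$ with $w \sim x$, $w \neq x$, while fixing $e_x$. This is exactly where the \emph{aperiodicity} of the tuple $(i_1,\dots,i_n)$ is needed: it guarantees that the relations among the $T_i$ (which, on basis vectors, only allow $T_i^* e_{f(w)} = e_w$ when $w \in X_i$) cannot create a ``short-circuit'' making $(V')^m e_x = e_x$ for $m$ not a multiple of the period while also catching spurious $e_w$; concretely, aperiodicity ensures $V'$ restricted to the cyclic orbit is a bijection with a single cycle of length exactly $n$, so $(V')^{n!} $ or a high power behaves like a projection onto $e_x$ within $\M$.

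The main obstacle — and the step I would spend the most care on — is precisely this last point: showing that the separating (aperiodicity) condition lets one manufacture an operator in $C^*(T_1,\dots,T_k)$ that, on $\M$, is the rank-one projection onto $\C e_x$ (equivalently, that $e_x$ is a separating-type vector whose associated ``return word'' operator isolates it). Without aperiodicity the period-$n$ cycle could coincide as a labelled cycle with a shorter repeated block, and then the natural candidate operator would fix a whole sub-orbit rather than just $e_x$, breaking the argument; the counterexample $f = \id_X$ in the text is the degenerate instance of this failure. So I would prove a sub-lemma: \emph{if $(i_1,\dots,i_n)$ is aperiodic and $f^{j-1}(x) \in X_{i_j}$, then $(T_{i_1}\cdots T_{i_n})^* e_x = e_x$ and $\lim_{m} (T_{i_1}\cdots T_{i_n})^{*m}(T_{i_1}\cdots T_{i_n})^{m}$ acts on $\M$ as the projection onto $\C e_x$}, then combine it with the reachability statement. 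Once that sub-lemma is in place, the rest is the routine inclusion-chasing sketched above, and I would write the theorem's proof in two short paragraphs: first reduce to extracting $e_x$ from an arbitrary non-zero $a \in \M$ using the sub-lemma, then use reachability between basis vectors to conclude $\overline{C^*(T_1,\dots,T_k)a} = \M$.
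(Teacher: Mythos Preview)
Your proposal is correct and follows essentially the same route as the paper's proof: form the cycle word $T_I = T_{i_n}\cdots T_{i_1}$, use aperiodicity to show $T_I$ fixes $e_x$ but (via the partial-isometry adjoint trick $T_I^{*}e_x=e_x$) eventually kills every other $e_y$ with $y\sim x$, so that $T_I^m$ converges strongly on $\M$ to the rank-one projection onto $\C e_x$; then push an arbitrary nonzero $a\in\M$ forward by a generator product to some $b$ with $\langle b,e_x\rangle\neq 0$, let $m\to\infty$, and conclude $e_x\in\overline{C^{*}(T_1,\ldots,T_k)a}$. The only cosmetic difference is that the paper applies $T_I^m$ directly to $b$ rather than $(T_I^{*})^{m}T_I^{m}$, and phrases everything at the level of vectors---so it never needs the limiting projection to belong to $C^{*}(T_1,\ldots,T_k)$ itself, only the limit vector $\langle b,e_x\rangle e_x$ to lie in the closed orbit, which you should keep in mind when writing up your sub-lemma.
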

\begin{proof}
From the assumption of the separating condition, there exists $n \in \N$ such that $f^n(x)=x$ and the $n$-tuple $(i_1,i_2,\cdots,i_n) \in \{1,2,\cdots,k\}^{*}$, such that $f^{j-1}(x) \in X_{i_j}$ where $1 \leq j \leq n$, is aperiodic.
Set $T_I=T_{i_n}T_{i_{n-1}} \cdots T_1$. Then
\begin{equation*}
T_I e_x=e_x
\end{equation*}
and it follows that
\begin{equation*}
T_I^{*}e_x=e_x.
\end{equation*}
By the assumption of aperiodicity, it holds that $T_I e_{f^{j}(x)}=0$ for $1 \leq j < n$.
Let $y \in X$. When $y \sim x$, it holds that $x \in \orb(y;f)$. Thus, for sufficiently large $m \in \N$, $T_I^m e_y \in \{0,e_x\}$. Assume $T_I^m e_y=e_x$.  Then,
\begin{align*}
e_y&=(T_I^{*})^{m}e_x=e_x,
\end{align*}
and it follows that $y=x$. Thus, for every $y \in X$ satisfying $y \sim x$ and $y \neq x$, it holds that $T_I^m e_y=0$ for sufficiently large $m \in \N$.

Let $a \in \M\backslash\{0\}$.
According to Lemma \ref{lem:Equality 4},
\begin{align*}
\M=\overline{C^{*}(T_1,T_2,\cdots,T_k)e_x}=\overline{\Span\{e_y \mid y \in X, y \sim x\}}.
\end{align*}
Hence, there exists $y \in X$ such that $y \sim x$ and $\innpr{a}{e_y} \neq 0$. Then, there exist $m \in \N$ and an $m$-tuple $(j_1,j_2,\cdots,j_m) \in \{1,2,\cdots,k\}^{*}$ satisfying
\begin{equation*}
T_{j_m}T_{j_{m-1}} \cdots T_{j_1}e_y=e_x.
\end{equation*}
Let $b=T_{j_m}T_{j_{m-1}} \cdots T_{j_1}a$. It follows that $\innpr{b}{e_x} \neq 0$, $b \in \overline{C^{*}(T_1,T_2,\cdots,T_k)a}$.
Since $\norm{T_I}\leq 1$,
\begin{equation*}
\norm{T_I^{m}b} \geq \norm{T_I^{m+1}b}
\end{equation*}
for every $m \in \N$. By the discussion about $T_I$ above, $T_I^m b \to \innpr{b}{e_x}e_x$ $(m \to \infty)$.
Then, $e_x \in \overline{C^{*}(T_1,T_2,\cdots,T_k)a}$ and it follows that
\begin{equation*}
\M=\overline{C^{*}(T_1,T_2,\cdots,T_k)e_x} \subseteq \overline{C^{*}(T_1,T_2,\cdots,T_k)a} \subseteq \M.
\end{equation*}
Hence, we obtain the conclusion.
\end{proof}

Separating condition of $f$ implies the next necessary condition.
\begin{theorem}[Necessary condition]
Let $x \in X$ and assume $f$ satisfies the separating condition for $x \in X$.
If $y \sim z$ for every $y,z \in X$, then $C^{*}(T_1,T_2,\cdots,T_k)$ has no non-trivial reducing subspaces.
\end{theorem}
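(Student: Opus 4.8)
The plan is to reduce the statement to the two preceding lemmas, Lemma \ref{lem:Equality 4} and Lemma \ref{lem:Generating}, and then invoke the characterization of irreducibility in Theorem \ref{thm:Irreducibility}. The guiding observation is that the hypothesis ``$y \sim z$ for every $y,z \in X$'' already forces the cyclic subspace generated by a single basis vector to be all of $\H$, while the separating condition promotes this to the statement that \emph{every} nonzero vector is cyclic, which is precisely one of the equivalent forms of having no non-trivial reducing subspaces.

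First I would fix the point $x \in X$ for which $f$ satisfies the separating condition and set $\M=\overline{C^{*}(T_1,T_2,\cdots,T_k)e_x}$. By hypothesis $y \sim x$ for every $y \in X$, so Lemma \ref{lem:Equality 4} gives
\begin{equation*}
\M=\overline{\Span\{e_y \mid y \in X, y \sim x\}}=\overline{\Span\{e_y \mid y \in X\}}=\H.
\end{equation*}
Then, since $f$ satisfies the separating condition for this very same $x$, Lemma \ref{lem:Generating} applies with this $\M$ and yields $\overline{C^{*}(T_1,T_2,\cdots,T_k)a}=\M=\H$ for every $a \in \H\backslash\{0\}$; that is, every nonzero vector of $\H$ is a cyclic vector for $C^{*}(T_1,T_2,\cdots,T_k)$. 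By the equivalence of conditions (i) and (iii) in Theorem \ref{thm:Irreducibility}, this is exactly the assertion that $C^{*}(T_1,T_2,\cdots,T_k)$ has no non-trivial reducing subspaces, which completes the argument.

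All of the substantive content is already carried by Lemma \ref{lem:Generating} — in particular the aperiodicity argument showing that the return word $T_I=T_{i_n}T_{i_{n-1}}\cdots T_1$ acts on $\M$ so that its powers collapse any nonzero vector onto its $e_x$-component — so I do not anticipate a genuine obstacle at this stage: the theorem is essentially a corollary of the two lemmas together with Theorem \ref{thm:Irreducibility}. The one point to watch is bookkeeping, namely that the point $x$ coming from the separating condition and the point $x$ used to define $\M$ are chosen to be the same, so that both lemmas may be invoked simultaneously.
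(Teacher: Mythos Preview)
Your proposal is correct and follows essentially the same route as the paper: first use Lemma~\ref{lem:Equality 4} together with the hypothesis to get $\M=\overline{C^{*}(T_1,\ldots,T_k)e_x}=\H$, then apply Lemma~\ref{lem:Generating} to conclude that every nonzero vector is cyclic, and finish via Theorem~\ref{thm:Irreducibility}. Your remark about using the same $x$ in both lemmas is exactly the right bookkeeping point, and there is nothing further to add.
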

\begin{proof}
Assume that $y \sim z$ for every $y,z \in X$. By Lemma \ref{lem:Equality 4}, it follows that
\begin{equation*}
\overline{C^{*}(T_1,T_2,\cdots,T_k)e_x}
=\overline{\Span\{e_y \mid y \in X, y \sim x\}}
=\overline{\Span\{e_y \mid y \in X\}}
=\H.
\end{equation*}
From Lemma \ref{lem:Generating}, we obtain that
\begin{equation*}
\overline{C^{*}(T_1,T_2,\cdots,T_k)a}=\H
\end{equation*}
for every $a \in \H\backslash\{0\}$. Then $a$ is a cyclic vector for $C^{*}(T_1,T_2,\cdots,T_k)$ and this implies that $C^{*}(T_1,T_2,\cdots,T_k)$ has no non-trivial reducing subspaces.
\end{proof}

\subsection{Cuntz-Krieger condition}

Thirdly, we introduce the notion of the Cuntz-Krieger condition for $f:X\to X$. This idea comes from a special class of $C^{*}$-algebras given by Cuntz and Krieger in \cite{CK80IM}. 
\begin{definition}[The Cuntz-Krieger algebras]
For $k \in \N_{>1}$ and $k \times k$ matrix $A=(A(i,j))$ with entries in $\{0,1\}$ such that no row is zero, the Cuntz-Krieger algebra $\O_{A}$ is the universal $C^{*}$-algebra generated by partial isometries $S_1,S_2,\cdots,S_k$ satisfying
\begin{enumerate}[(i)]
\item $\sum_{i=1}^n S_iS_i^{*}=I$;
\item $S_j^{*}S_j=\sum_{i=1}^n A(j,i)S_iS_i^{*}$.
\end{enumerate}
\end{definition}
Cuntz and Krieger proved that $\O_{A}$ is a simple $C^{*}$-algebra if $A$ is irreducible and not a permutation matrix in \cite{CK80IM}, and so, for any partial isometries $S_1,S_2,\cdots,S_k \in \B(\H)$ satisfying the relation above, $C^{*}(S_1,S_2,\cdots,S_k)$ is isomorphic to $\O_{A}$. The Cuntz algebras are special class of the Cuntz-Krieger algebras.

Keeping this concept in mind, we make the following definition. 
\begin{definition}[Cuntz-Krieger condition]
When the following hold:
\begin{enumerate}[(i)]
\item $f$ is surjective; \label{def:CuntzKriegerCondition1}
\item For every $1 \leq j \leq k$, there exists $J \subseteq \{1,2,\cdots,k\}$ such that $f(X_j)=\bigcup_{i \in J}X_i$, \label{def:CuntzKriegerCondition2}
\end{enumerate}
we say that $f$ satisfies the Cuntz-Krieger condition.
\end{definition}

For the Cuntz-Krieger condition of $f$, the following proposition holds.
\begin{proposition}
If $f$ satisfies the Cuntz-Krieger condition, then there exist the Cuntz-Krieger algebra $\O_A$ and a surjective $*$-homomorphism $\phi:\O_A \to C^{*}(T_1,T_2,\cdots,T_k)$.
\end{proposition}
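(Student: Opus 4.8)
The plan is to show that the adjoints $T_1^{*},T_2^{*},\cdots,T_k^{*}$ are partial isometries satisfying the defining relations of a suitable Cuntz-Krieger algebra, and then to obtain $\phi$ from the universal property of that algebra. First I would record the initial and final projections of the $T_j$. Since $f|_{X_j}$ is injective, $T_j$ maps $\overline{\Span\{e_x \mid x\in X_j\}}$ isometrically onto $\overline{\Span\{e_y \mid y\in f(X_j)\}}$ and annihilates its orthogonal complement; writing $P_j$ for the orthogonal projection onto $\overline{\Span\{e_x \mid x\in X_j\}}$, this yields $T_j^{*}T_j=P_j$, while $T_jT_j^{*}$ is the orthogonal projection onto $\overline{\Span\{e_y \mid y\in f(X_j)\}}$. (Discarding any empty block, we may assume every $X_j$ is nonempty, which changes neither $f$ nor $C^{*}(T_1,\cdots,T_k)$.)

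Next I would define the $k\times k$ matrix $A$ with entries in $\{0,1\}$ by $A(j,i)=1$ if $X_i\subseteq f(X_j)$ and $A(j,i)=0$ otherwise. The Cuntz-Krieger condition (ii) gives $f(X_j)=\bigcup_{i\in J}X_i$ for some $J\subseteq\{1,\cdots,k\}$, and since the $X_i$ are pairwise disjoint this forces $J=\{i\mid A(j,i)=1\}$; as $X_j\neq\emptyset$ this set is nonempty, so $A$ has no zero row and $\O_A$ is defined. Putting $R_i:=T_i^{*}$, the computation above gives $R_iR_i^{*}=T_i^{*}T_i=P_i$, so $\sum_{i=1}^{k}R_iR_i^{*}=\sum_{i=1}^{k}P_i=I$ because $X=\bigcup_{i=1}^{k}X_i$ with the blocks pairwise disjoint, which is relation (i); and $R_j^{*}R_j=T_jT_j^{*}$ is the projection onto $\overline{\Span\{e_y\mid y\in f(X_j)\}}$, which, because $f(X_j)$ is exactly the union of those blocks $X_i$ with $A(j,i)=1$, equals $\sum_{i=1}^{k}A(j,i)P_i=\sum_{i=1}^{k}A(j,i)R_iR_i^{*}$, which is relation (ii). Hence $R_1,\cdots,R_k$ are partial isometries satisfying the relations of $\O_A$.

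Finally, by the universal property of $\O_A$ there is a (unique) $*$-homomorphism $\phi\colon\O_A\to C^{*}(R_1,\cdots,R_k)$ carrying the canonical generators of $\O_A$ to $T_1^{*},\cdots,T_k^{*}$, and $\phi$ is surjective because its image is a $C^{*}$-subalgebra containing $R_1,\cdots,R_k$. Since a $C^{*}$-algebra is closed under adjoints, $C^{*}(R_1,\cdots,R_k)=C^{*}(T_1^{*},\cdots,T_k^{*})=C^{*}(T_1,T_2,\cdots,T_k)$, so $\phi$ is the required surjective $*$-homomorphism.

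The step I expect to need the most care is the verification of relation (ii). It relies simultaneously on the bounded condition, through the injectivity of $f|_{X_j}$ (this is what makes $T_j^{*}T_j$ equal the block projection $P_j$ and makes $T_jT_j^{*}$ a sum of block projections at all), and on condition (ii) of the Cuntz-Krieger hypothesis, namely that $f(X_j)$ is a union of whole blocks (this is what identifies $T_jT_j^{*}$ with $\sum_iA(j,i)P_i$). The surjectivity half of the Cuntz-Krieger condition is not actually needed for the relations above; it serves only to ensure the complementary fact that $A$ has no zero column.
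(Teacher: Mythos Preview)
Your proposal is correct and follows essentially the same route as the paper: set $S_i=T_i^{*}$, identify $S_iS_i^{*}$ with the block projection $P_i$ and $S_j^{*}S_j=T_jT_j^{*}$ with the projection onto $\overline{\Span\{e_y\mid y\in f(X_j)\}}$, define $A$ from the inclusion pattern of the blocks in $f(X_j)$, verify the Cuntz--Krieger relations, and invoke the universal property.

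One point where you are actually sharper than the paper: the paper appeals to surjectivity of $f$ (condition (i)) to conclude that ``no row of $A$ is zero,'' but the argument it writes down (for every $i$ there exists $j$ with $A(j,i)=1$) in fact shows that no \emph{column} is zero. Your observation that the no-zero-row condition required by the definition of $\O_A$ follows simply from $X_j\neq\emptyset$ (hence $f(X_j)\neq\emptyset$) is the right justification, and your remark that surjectivity only buys the complementary no-zero-column fact is accurate. Your explicit discarding of empty blocks is also a clean way to handle that edge case, since it affects neither $f$ nor the generated $C^{*}$-algebra.
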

\begin{proof}
Define $S_i=T_i^{*}$ for $1 \leq i \leq k$.
Then, $S_i$ is a partial isometry and $S_iS_i^{*}=T_i^{*}T_i$ is the projection for $\overline{\Span\{e_x|x \in X_i\}}$.
Hence, $\sum_{i=1}^k S_iS_i^{*}=I$.

Fix $1 \leq j \leq k$. According to the assumption of the Cuntz-Krieger condition (\ref{def:CuntzKriegerCondition2}), there exists $J \subseteq \{1,2,\cdots,k\}$ such that $f(X_j)=\bigcup_{i \in J}X_i$.
We define $A(j,i)$ for $1 \leq i \leq k$ as follows:
\begin{equation*}
A(j,i)=
\begin{cases}
1, & i \in J,\\
0, & o.w.
\end{cases}
\end{equation*}
Consider the $k \times k$ matrix $A=(A(j,i))$.
For every $1 \leq i \leq k$, by the assumption of the Cuntz-Krieger condition (\ref{def:CuntzKriegerCondition1}), there exists $1 \leq j \leq k$ satisfying $X_i \subseteq f(X_j)$ and $i \in J$ which implies that $A(j,i)=1$.
It follows that no row of $A$ is zero.
By the definition of $A$, it holds that
\begin{align*}
S_j^{*}S_j=T_jT_j^{*}
=\sum_{i \in J}S_i S_i^{*}
=\sum_{i=1}^k A(j,i)S_i S_i^{*}
\end{align*}
for $1 \leq j \leq k$.
Therefore, $S_1,S_2,\cdots,S_k$ satisfy the relation of the Cuntz-Krieger algebra $\O_A$ and there exists a surjective $*$-homomorphism $\phi:\O_A \to C^{*}(T_1,T_2,\cdots,T_k)=C^{*}(S_1,S_2,\cdots,S_k)$.
\end{proof}

\subsection{Transformations of dynamical systems}

Finally, we discuss transformations of dynamical systems which is generalization of transformation of the Collatz conjecture in subsection \ref{subsection:FC}. Let $\Sigma (\neq \emptyset) \subseteq X$ and $P$ be the first return map for $f$ on $\Sigma$. Assume that the domain of $P$ coincides with $\Sigma$.

\begin{proposition}[Sufficient condition] \label{prop:transS}
Assume that $\orb(x;f) \cap \Sigma \neq \emptyset$ for every $x \in X$.
If $x \sim y$ by $P$ for every $x,y \in \Sigma$, then $x \sim y$ by $f$ for every $x,y \in X$.
\end{proposition}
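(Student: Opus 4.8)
The plan is to transport an arbitrary point of $X$ into $\Sigma$ along its forward $f$-orbit, apply the hypothesis there, and then glue everything together using transitivity of the $f$-orbit equivalence relation.

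The key preliminary step I would establish is the inclusion $\orb(z;P) \subseteq \orb(z;f)$ for every $z \in \Sigma$. Since we assume the domain of $P$ coincides with $\Sigma$, the first-return map $P$ is a genuine map $\Sigma \to \Sigma$, so all of its iterates are defined; by induction on $k$, if $P^{k}(z) = f^{j}(z)$ for some $j \in \{0\}\cup\N$, then $P^{k+1}(z) = f^{\tau(P^{k}(z))}\big(P^{k}(z)\big) = f^{\,j+\tau(P^{k}(z))}(z) \in \orb(z;f)$. It follows that if $z,w \in \Sigma$ satisfy $z \sim w$ by $P$, then $\orb(z;P)\cap\orb(w;P) \neq \emptyset$ forces $\orb(z;f)\cap\orb(w;f)\neq\emptyset$, that is, $z \sim w$ by $f$.

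Next, I would take arbitrary $x,y \in X$. By the standing assumption there exist $N,M \in \{0\}\cup\N$ with $f^{N}(x) \in \Sigma$ and $f^{M}(y) \in \Sigma$; since $f^{N}(x) \in \orb(x;f)$ we get $x \sim f^{N}(x)$ by $f$, and similarly $y \sim f^{M}(y)$ by $f$. Applying the hypothesis to the points $f^{N}(x), f^{M}(y)$ of $\Sigma$ gives $f^{N}(x) \sim f^{M}(y)$ by $P$, hence $f^{N}(x) \sim f^{M}(y)$ by $f$ by the previous paragraph. Chaining the three relations through transitivity of the $f$-orbit equivalence relation (which is an equivalence relation, as recorded in Section~\ref{sec:Collatz conjecture}) yields $x \sim y$ by $f$, as desired.

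I do not expect a real obstacle; the only point requiring care is making the inclusion $\orb(z;P)\subseteq\orb(z;f)$ rigorous, because the return time $\tau$ changes as one moves along the $P$-orbit. This is precisely where the hypothesis that the domain of $P$ is all of $\Sigma$ is needed, so that no iterate of $P$ is ever undefined and the induction goes through.
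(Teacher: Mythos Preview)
Your proof is correct and follows essentially the same route as the paper: pick points of $\Sigma$ on the forward $f$-orbits of $x$ and $y$, use the hypothesis to relate them by $P$, pass to the $f$-relation via $\orb(z;P)\subseteq\orb(z;f)$, and conclude by transitivity. The only difference is cosmetic: you spell out the inclusion $\orb(z;P)\subseteq\orb(z;f)$ by induction, whereas the paper uses it without comment.
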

\begin{proof}
Let $x,y \in X$. By the assumption, we can pick
\begin{align*}
z &\in \orb(x;f) \cap \Sigma \neq \emptyset,\\
w &\in \orb(y;f) \cap \Sigma \neq \emptyset,
\end{align*}
and then $x \sim z$, $y \sim w$ by $f$. Since $z,w \in \Sigma$, it holds that $z \sim w$ by $P$, that is, $\orb(z;P) \cap \orb(w;P) \neq \emptyset$. It follows that $\orb(z;f) \cap \orb(w;f) \neq \emptyset$. We obtain $z \sim w$ by $f$ and this implies $x \sim y$ by $f$.
\end{proof}

\begin{proposition}[Necessary condition] \label{prop:transN}
Assume that there exists a periodic point $x \in X$ for $f$ such that $x \in \Sigma$ and $\orb(x;P)=\orb(x;f) \cap \Sigma$. 
If $y \sim z$ by $f$ for every $y,z \in X$, then $y \sim z$ by $P$ for every $y,z \in \Sigma$.
\end{proposition}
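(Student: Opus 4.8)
The plan is to use the periodic point $x$ as a common hub: I will show that $y \sim x$ by $P$ for every $y \in \Sigma$, and then conclude by transitivity. Since the domain of $P$ equals $\Sigma$, the map $P$ is a genuine self-map $P:\Sigma\to\Sigma$, so the orbit equivalence relation by $P$ on $\Sigma$ is an equivalence relation (the equivalence-relation lemma of Section \ref{sec:Collatz conjecture} applied to $P$), and once $y \sim x$ and $z \sim x$ by $P$ are established, symmetry and transitivity give $y \sim z$ by $P$ for all $y,z \in \Sigma$.

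The technical core is the following fact about first-return maps: if $y \in \Sigma$ and $s \in \orb(y;f)\cap\Sigma$, then $s \in \orb(y;P)$. I would prove this by strong induction on the least $m \ge 0$ with $f^m(y)=s$, keeping $y$ (ranging over $\Sigma$) inside the induction statement. The base case $m=0$ is immediate since $s=y\in\orb(y;P)$. For $m \ge 1$, the first-return time $\tau(y)$ is defined because the domain of $P$ is $\Sigma$, and minimality of $\tau(y)$ forces $m \ge \tau(y)$, since $f^m(y)=s\in\Sigma$; hence $s=f^{m-\tau(y)}(P(y))$ with $P(y)\in\Sigma$ and $m-\tau(y)<m$, so the inductive hypothesis applied to $P(y)$ gives $s \in \orb(P(y);P)\subseteq\orb(y;P)$. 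In words, iterating $P$ from $y$ enumerates exactly the points of $\orb(y;f)$ lying in $\Sigma$.

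Now fix $y\in\Sigma$. By hypothesis $y\sim x$ by $f$, so there is $w\in\orb(y;f)\cap\orb(x;f)$; write $w=f^a(y)$. Since $x$ is $f$-periodic, $\orb(x;f)$ is a finite cycle on which $w$ lies, so $x\in\orb(w;f)$, and therefore $x=f^{a+c}(y)$ for some $c\ge 0$, i.e. $x\in\orb(y;f)$. Because $x\in\Sigma$, the fact above gives $x\in\orb(y;P)$, and trivially $x\in\orb(x;P)$, so $\orb(y;P)\cap\orb(x;P)\ne\emptyset$, that is, $y\sim x$ by $P$. The same argument applied to $z$ gives $z\sim x$ by $P$, and transitivity of $\sim$ by $P$ finishes the proof. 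The stated hypothesis $\orb(x;P)=\orb(x;f)\cap\Sigma$ is consistent with this picture (it is in fact forced by the standing assumption that the domain of $P$ is $\Sigma$) and guarantees in particular that $x$ is a $P$-periodic point genuinely sitting on its own $P$-orbit.

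The part requiring care is purely bookkeeping: verifying that the forward $P$-orbit of $y$ really coincides with $\orb(y;f)\cap\Sigma$ — the inclusion $\orb(y;f)\cap\Sigma\subseteq\orb(y;P)$ being the delicate one, especially when $\orb(y;f)$ is eventually periodic and points recur — and checking that $P$ indeed restricts to a self-map of $\Sigma$ so that the earlier equivalence-relation lemma applies verbatim. I do not expect any deeper difficulty; this is the reverse implication of the generalized transformation statement, in the same spirit as Proposition \ref{prop:Transformation}.
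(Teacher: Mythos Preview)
Your proof is correct and follows the same hub strategy as the paper --- show $y \sim x$ by $P$ for every $y \in \Sigma$, then conclude by transitivity --- but the execution differs in a meaningful way. The paper's argument is a one-liner: from $f^n(y)=x$ it notes that $P^n(y)=f^m(y)$ for some $m\ge n$ (each application of $P$ advances $f$ by at least one step), hence $P^n(y)\in\orb(x;f)\cap\Sigma$, and then invokes the stated hypothesis $\orb(x;P)=\orb(x;f)\cap\Sigma$ to place $P^n(y)$ in $\orb(x;P)$. You instead prove by induction the general inclusion $\orb(y;f)\cap\Sigma\subseteq\orb(y;P)$ for every $y\in\Sigma$, which drops $x$ directly into $\orb(y;P)$ without ever touching the hypothesis; and as you correctly observe, your lemma applied with $y=x$ shows that the hypothesis $\orb(x;P)=\orb(x;f)\cap\Sigma$ is in fact automatic under the standing assumption that the domain of $P$ is all of $\Sigma$. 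So your route is a bit longer but buys a sharper result (the extra hypothesis is superfluous), while the paper's route is quicker but leans on an assumption it did not need.
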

\begin{proof}
Let $y \in \Sigma$. By the assumption, it holds that $y \sim x$ by $f$ and $f^n(y)=x$ for some $n \in \N$. Then, $P^n(y) \in \orb(x;f) \cap \Sigma=\orb(x;P)$. It follows that $x \sim y$ by $P$ for every $y \in \Sigma$ and so we obtain the conclusion.
\end{proof}

\begin{example}
\begin{enumerate}[(i)]
\item ($qx{+}1$-function for $q=5$, Mersenne numbers)
For $q=5$, let $N_1=\{n \in \N \mid n \equiv 1,3,7,9 \pmod{10}\}$ and $N_2=f_5(N_1)=\{n \in \N \mid n \equiv 6,16,36,46 \pmod{50}\}$. Consider the first-return map for $f_5$ on $N_1 \cup N_2$, denoted by $P$.
It follows that $P(N_1)=N_2$.
Since $\{n \pmod{50} \mid n \in \N, \gcd(n,10)=2\}=\{2^k \pmod{50} \mid 1 \le k \le 20\}$ is a cyclic group generated by $2$ under the operation of multiplication, for every $n \in N_1 \cup N_2$, there exists $k \in \N$ such that $2^k n \in N_2$.
Therefore, we can obtain that $P(N_2)=N_1 \cup N_2$, and then, $P$ satisfies the Cuntz-Krieger condition.

Let $\H$ be a Hilbert space with $\dim{\H}=\aleph_0$ and $\{e_n\}_{n \in N_1 \cup N_2}$ be a C.O.N.S. for $\H$. Define $T_1, T_2 \in \B(\H)$ by:
\begin{align*}
T_1e_n&=
\begin{cases}
e_{P(n)}, & n \in N_1,\\
0, & n \in N_2,
\end{cases}
\\
T_2e_n&=
\begin{cases}
0, & n \in N_1,\\
e_{P(n)}, & n \in N_2,
\end{cases}
\end{align*}
and $S_1,S_2 \in \B(\H)$ as $S_1=T_1^{*}T_2^{*}, S_2=T_2^{*}$.
It follows that, $C^{*}(T_1,T_2)=C^{*}(S_1,S_2)$ is isomorphic to the Cuntz algebra $\O_2$.

For every $n \in \N$, $\orb(n;f_5)$ includes an odd number which is relatively prime to $5$. Thus, $\orb(n;f_5) \cap \left(N_1 \cup N_2\right) \neq \emptyset$ for every $n \in \N$.
Furthermore, $1$ is a periodic point for $f_5$ such that $\orb(1;P)=\orb(1;f_5) \cap \left(N_1 \cup N_2\right)$.
According to proposition \ref{prop:transS}, \ref{prop:transN}, $n \sim m$ by $f_5$ for every $n,m \in \N$ if and only if $n \sim m$ by $P$ for every $n,m \in N_1 \cup N_2$.

For $k>2$ and the Mersenne number $q=2^k-1$, let $N_1=\{n \in \N \mid \text{$n$ is odd and } \exists l \in \N, 2n \equiv 2^l \pmod{2q^2}\}$, $N_2=f_q(N_1)$.
Consider the first-return map for $f_q$ on $N_1 \cup N_2$, denoted by $P$.
It follows that $P(N_1)=N_2$.
By definitions of $f_q$ and $N_1$,
\begin{align*}
&N_2=f_q(N_1) \subseteq\\
&\{n \in \N \mid n \equiv 1+q,1+3q,1+5q,\cdots,1+(2q-1)q \pmod{2q^2}\}.
\end{align*} 
Since $1+q=2^k$ where $k>2$, $\binom{1+q}{l}$ is even number for $1<l<q$. Therefore,
\begin{align*}
(1+q)^{1+q}&=\sum_{i=0}^{1+q}\binom{1+q}{i}q^i\\
&\equiv 1+(1+q)q+(1+q)q^q+q^{1+q} \pmod{2q^2}\\
&\equiv 1+q+q^2+q^{1+q} \pmod{2q^2}\\
&\equiv 1+q+(1+q^{q-1})q^2 \pmod{2q^2}\\
&\equiv 1+q \pmod{2q^2}.
\end{align*}
Conversely, if $(1+q)^l \equiv (1+q) \pmod{2q^2}$ where $l \in \N\backslash\{1\}$, then
\begin{align*}
0 &\equiv (1+q)^l-(1+q) \pmod{2q^2}\\
&\equiv (1+q)((1+q)^{l-1}-1) \pmod{2q^2}\\
&\equiv 2^k\left( \sum_{i=1}^{l-1} \binom{l-1}{i}q^i \right) \pmod{2q^2}\\
&\equiv 2^k(l-1)q \pmod{2q^2} .
\end{align*}
It follows that $l-1$ is a multiple of $q$. Thus, $1+q, (1+q)^2, \cdots, (1+q)^q \pmod{2q^2}$ are different from each other.
Since $\{n \in \N \mid n \equiv 1+q,1+3q,1+5q,\cdots,1+(2q-1)q \pmod{2q^2}\}$ is multiplicatively closed, it holds that $\{(1+q)^l \pmod{2q^2} \mid 1 \le l \le q\}=\{1+q,1+3q,1+5q,\cdots,1+(2q-1)q \pmod{2q^2}\}$.
For every $m \in \N$,
\begin{align*}
2(1+q)^{2m} \equiv 2\sum_{i=0}^{2m}\binom{2m}{i}q^i
\equiv \sum_{i=0}^{2m}\binom{2m}{i}2q^i
\equiv 2+4mq \pmod{2q^2}.
\end{align*}
By $1+q=2^k$ and the former calculation, $1+2mq \in N_1$. Thus,
$\{n \in \N \mid n \equiv 1 \pmod{2q}\} \subseteq N_1$
and
$\{n \in \N \mid n \equiv 1+q \pmod{2q^2}\} \subseteq N_2$.
Hence, for every $n \in N_1 \cup N_2$, there exists $m \in \N$ such that $2^m n \in N_2$.
Therefore, we can obtain that $P(N_2)=N_1 \cup N_2$, and then, $P$ satisfies the Cuntz-Krieger condition.

Let $\H$ be a Hilbert space with $\dim{\H}=\aleph_0$ and $\{e_n\}_{n \in N_1 \cup N_2}$ be a C.O.N.S. for $\H$. Define $T_1, T_2 \in \B(\H)$ by:
\begin{align*}
T_1e_n&=
\begin{cases}
e_{P(n)}, & n \in N_1,\\
0, & n \in N_2,
\end{cases}
\\
T_2e_n&=
\begin{cases}
0, & n \in N_1,\\
e_{P(n)}, & n \in N_2,
\end{cases}
\end{align*}
and $S_1,S_2 \in \B(\H)$ as $S_1=T_1^{*}T_2^{*}, S_2=T_2^{*}$.
It follows that, $C^{*}(T_1,T_2)=C^{*}(S_1,S_2)$ is isomorphic to the Cuntz algebra $\O_2$.

For every $n \in \N$, $\orb(n;f_q)$ includes an even number $qm+1$ for some odd number $m \in \N$.
Let $x \in \N$ be the positive number satisfying that $2^{-x}(qm+1)$ is an odd number.
Since $1+q^2 \equiv (1+q)^l \pmod{2q^2}$ for some $l \in \N$ and $1+q=2^k$, there exists $l' \in \N$ such that $l'>x$ and $2^{l'} \equiv 1+q^2 \pmod{2q^2}$.
It follows that $2^{l'+1} \equiv 2+2q^2 \equiv 2 \pmod{2q^2}$.
For $qm+1$, there exists $l'' \in \N$ such that $qm+1 \equiv (1+q)^{l''} \pmod{2q^2}$. Then,
\begin{align*}
2 \cdot 2^{-x}(qm+1) &\equiv 2^{l'+1-x}(qm+1) \pmod{2q^2}\\
&\equiv 2^{l'+1-x}(1+q)^{l''} \pmod{2q^2}\\
&\equiv 2^{l'+1-x}2^{kl''} \equiv 2^{l'+1-x+kl''} \pmod{2q^2}.
\end{align*}
Hence, $2^{-x}(qm+1) \in \orb(n;f_q) \cap N_1$, and so $\orb(n;f_q) \cap \left(N_1 \cup N_2\right) \neq \emptyset$.
Furthermore, $1$ is a periodic point for $f_q$ satisfying $\orb(1;P)=\orb(1;f_q) \cap \left(N_1 \cup N_2\right)$.
According to proposition \ref{prop:transS}, \ref{prop:transN}, $n \sim m$ by $f_q$ for every $n,m \in \N$ if and only if $n \sim m$ by $P$ for every $n,m \in N_1 \cup N_2$.

\item ($3x{+}d$-function)
Let $d>0$ be an odd integer and $k\ge0$ be the integer satisfying that $3^k \mid d$ and $3^{k+1} \nmid d$.
Define $N_1=\{n \in \N \mid n \equiv 3^k, 5\cdot3^k \pmod{6\cdot3^k}\}$ and $N_2=f_d(N_1)=\{n+d \mid n \in \N, n \equiv 3\cdot3^k, 15\cdot3^k \pmod{18\cdot3^k}\}$.
Consider the first-return map for $f_d$ on $N_1 \cup N_2$, denoted by $P$.
It follows that $P(N_1)=N_2$.
Since $\{n \pmod{18\cdot3^k} \mid n \in \N, \gcd(n,18\cdot3^k)=2\cdot3^k\}=\{2^l\cdot3^k \pmod{18\cdot3^k} \mid 1 \le l \le 6\}$, for every $n \in N_1 \cup N_2$, there exists sufficiently large $m \in \N$ such that $2^m n \in N_2$.
Therefore, we can obtain $P(N_2)=N_1 \cup N_2$, and then, $P$ satisfies the Cuntz-Krieger condition.

Let $\H$ be a Hilbert space with $\dim{\H}=\aleph_0$ and $\{e_n\}_{n \in N_1 \cup N_2}$ be a C.O.N.S. for $\H$. Define $T_1, T_2 \in \B(\H)$ by:
\begin{align*}
T_1e_n&=
\begin{cases}
e_{P(n)}, & n \in N_1,\\
0, & n \in N_2,
\end{cases}
\\
T_2e_n&=
\begin{cases}
0, & n \in N_1,\\
e_{P(n)}, & n \in N_2,
\end{cases}
\end{align*}
and $S_1,S_2 \in \B(\H)$ as $S_1=T_1^{*}T_2^{*}, S_2=T_2^{*}$.
It follows that, $C^{*}(T_1,T_2)=C^{*}(S_1,S_2)$ is isomorphic to the Cuntz algebra $\O_2$.

For every $n \in \N$, $\orb(n;f_d)$ includes an odd number which is in $N_1$. Thus, $\orb(n;f_d) \cap \left(N_1 \cup N_2\right) \neq \emptyset$ for every $n \in \N$.
Furthermore, $d$ is a periodic point for $f_d$ such that $\orb(d;P)=\orb(d;f_d) \cap \left(N_1 \cup N_2\right)$.
According to proposition \ref{prop:transS}, \ref{prop:transN}, $n \sim m$ by $f_d$ for every $n,m \in \N$ if and only if $n \sim m$ by $P$ for every $n,m \in N_1 \cup N_2$.
\end{enumerate}
\end{example}

\section*{Acknowledgements}
I would like to thank my thesis supervisor, Professor Hiroki Matui (Chiba University) for helpful discussions in the seminar of the study, and appropriate advices from the first draft of the paper.


\bibliographystyle{plain}

\bibliography{bib}

\begin{thebibliography}{10}

\bibitem{Ba21JS}
David Barina.
\newblock Convergence verification of the collatz problem.
\newblock {\em J. Supercomput.}, 77(3):2681--2688, 2021.

\bibitem{BT98JMSJ}
Mike Boyle and Jun Tomiyama.
\newblock Bounded topological orbit equivalence and {$C^*$}-algebras.
\newblock {\em J. Math. Soc. Japan}, 50(2):317--329, 1998.

\bibitem{Co96GTM}
John~B. Conway.
\newblock {\em A course in functional analysis}, volume~96 of {\em Graduate
  Texts in Mathematics}.
\newblock Springer-Verlag, New York, second edition, 1990.

\bibitem{Co00GSM}
John~B. Conway.
\newblock {\em A course in operator theory}, volume~21 of {\em Graduate Studies
  in Mathematics}.
\newblock American Mathematical Society, Providence, RI, 2000.

\bibitem{Cu77CMP}
Joachim Cuntz.
\newblock Simple {$C\sp*$}-algebras generated by isometries.
\newblock {\em Comm. Math. Phys.}, 57(2):173--185, 1977.

\bibitem{CK80IM}
Joachim Cuntz and Wolfgang Krieger.
\newblock A class of {$C\sp{\ast} $}-algebras and topological {M}arkov chains.
\newblock {\em Invent. Math.}, 56(3):251--268, 1980.

\bibitem{GMPS10IM}
Thierry Giordano, Hiroki Matui, Ian~F. Putnam, and Christian~F. Skau.
\newblock Orbit equivalence for {C}antor minimal {$\Bbb Z^d$}-systems.
\newblock {\em Invent. Math.}, 179(1):119--158, 2010.

\bibitem{GPS95JRAM}
Thierry Giordano, Ian~F. Putnam, and Christian~F. Skau.
\newblock Topological orbit equivalence and {$C^*$}-crossed products.
\newblock {\em J. Reine Angew. Math.}, 469:51--111, 1995.

\bibitem{MR2560718}
Jeffrey~C. Lagarias.
\newblock The {$3x+1$} problem: an annotated bibliography (1963--1999).
\newblock In {\em The ultimate challenge: the {$3x+1$} problem}, pages
  267--341. Amer. Math. Soc., Providence, RI, 2010.

\bibitem{LSW99EM}
Simon Letherman, Dierk Schleicher, and Reg Wood.
\newblock The {$3n+1$}-problem and holomorphic dynamics.
\newblock {\em Experiment. Math.}, 8(3):241--251, 1999.

\bibitem{LP21IFACPO}
John Leventides and Costas Poulios.
\newblock An operator theoretic approach to the 3x + 1 dynamical system.
\newblock {\em IFAC-PapersOnLine}, 54(9):225--230, 2021.

\bibitem{Ma97IJM}
Kengo Matsumoto.
\newblock On {$C^*$}-algebras associated with subshifts.
\newblock {\em Internat. J. Math.}, 8(3):357--374, 1997.

\bibitem{Ma98MS}
Kengo Matsumoto.
\newblock {$K$}-theory for {$C^*$}-algebras associated with subshifts.
\newblock {\em Math. Scand.}, 82(2):237--255, 1998.

\bibitem{MM14KJM}
Kengo Matsumoto and Hiroki Matui.
\newblock Continuous orbit equivalence of topological {M}arkov shifts and
  {C}untz-{K}rieger algebras.
\newblock {\em Kyoto J. Math.}, 54(4):863--877, 2014.

\end{thebibliography}

\end{document}